\documentclass[12pt,a4paper]{article}
\setlength{\parindent}{22pt}
\setlength{\parskip}{5pt}
\setlength{\baselineskip}{9pt}
\setlength{\textheight}{22.5true cm}
\setlength{\textwidth}{16true cm}
\oddsidemargin 0pt
\raggedbottom
\headsep=0pt
\usepackage{indentfirst}
\usepackage{amsfonts}
\usepackage{amssymb}
\usepackage{mathrsfs}
\usepackage{amsmath}
\usepackage{amsthm}
\usepackage{enumerate}
\usepackage{cite}
\usepackage{cases}
\allowdisplaybreaks
\pagestyle{plain}

\newtheorem{defn}{Definition}[section]
\newtheorem{thm}[defn]{Theorem}
\newtheorem{lem}[defn]{Lemma}
\newtheorem{prop}[defn]{Proposition}
\newtheorem{cor}[defn]{Corollary}
\newtheorem{ex}[defn]{Example}
\newtheorem{re}[defn]{Remark}
\newtheorem{no}[defn]{Notation}
\bibliographystyle{plain}
\begin{document}
\title{{\bf Algebras of quotients of Hom-Lie algebras}}
\author{\normalsize \bf Chenrui Yao,  Liangyun Chen}
\date{{\small{ School of Mathematics and Statistics,  Northeast Normal University,\\ Changchun 130024, CHINA
}}} \maketitle
\date{}

   {\bf\begin{center}{Abstract}\end{center}}

In this paper, we introduce the notion of algebras of quotients of Hom-Lie algebras and investigate some properties which can be lifted from a Hom-Lie algebra to its algebra of quotients. We also give some necessary and sufficient conditions for Hom-Lie algebras having algebras of quotients. We also examine the relationship between a Hom-Lie algebra and the associative algebra generated by inner derivations of the corresponding Hom-Lie algebra of quotients. Moreover, we introduce the notion of dense extensions and get a proposition about Hom-Lie algebras of quotients via dense extensions.

\noindent\textbf{Keywords:} \,  Hom-Lie algebras; Algebras of quotients; dense extensions.\\
\textbf{2010 Mathematics Subject Classification:} 17D99, 17B10.
\renewcommand{\thefootnote}{\fnsymbol{footnote}}
\footnote[0]{ Corresponding author(L. Chen): chenly640@nenu.edu.cn.}
\footnote[0]{Supported by  NNSF of China (No. 11771069).}

\section{Introduction}
Algebras where the identities defining the structure are twisted by a homomorphism are called Hom-algebras. The notion of Hom-Lie algebras was introduced by Hartwig, Larsson and Silvestrov to describe the structures on certain deformations of the Witt algebras and the Virasoro algebras in \cite{HLS}. Hom-Lie algebras are also related to deformed vector fields, the various versions of the Yang-Baxter equations, braid group representations, and quantum groups in \cite{HLS, Y1, Y2}. More research on Hom-Lie algebras see references \cite{G1, S2}.

The notion of ring of quotients was introduced by Utumi in \cite{U1}, which played an important role in the development of the theories of associative and commutative rings. He showed that every associative ring without right zero divisors has a maximal left quotients ring and constructed it. Inspired by \cite{U1}, Siles Molina studied the algebras of quotients of Lie algebras in \cite{S1}. Martindale rings of quotients were introduced by Martindale in 1969 for prime rings in \cite{M1}, which was designed for applications to rings satisfying a generalized polynomial identity. In \cite{GG}, Garc\'{i}a and G\'{o}mez defined Martindale-like quotients for Lie triple systems with respect to power filters of sturdy ideals and constructed the maximal system of quotients in the nondegenerate cases. More research about quotients of Lie systems refer in references \cite{GC, MCL}. In \cite{M2}, Mart\'{i}nez derived a necessary and sufficient Ore type condition for a Jordan algebra to have a ring of fractions, which is the origin of algebras of quotients of Jordan systems. In \cite{AGG, AM, GG2, M3}, authors introduced and researched algebras of quotients and Martindale-like quotients of Jordan systems respectively. Inspired by \cite{S1}, we will introduce the notion of algebras of quotients of Hom-Lie algebras and prove the properties, such as semiprimeness and primeness,  can be lifted from a Hom-Lie algebra to its algebra of quotients in this paper.

As we know, one popular topic of research is the relationship between algebras of quotients of non-associative algebras and associative algebras in recent years. In \cite{PS}, authors examine the relationship between associative and Lie algebras of quotients. Inspired by them, we'll explore whether there exists a relationship between associative and Hom-Lie algebras of quotients. The answer is affirmative and we will describe in detail in the following.

The paper is organised as follows: in Section \ref{se:2}, we'll introduce some basic definitions and prove some elementary propositions which will be used in the following. In Section \ref{se:3}, we'll give the definition of algebra of quotients of a Hom-Lie algebra at first. Next, we'll introduce the concept of ideally absorbed of a Hom-Lie algebra and show that a Hom-Lie algebra $(Q, [\cdot, \cdot], \alpha)$ is an algebra of quotients of a Hom-Lie algebra $(L, [\cdot, \cdot], \alpha)$ if and only if $(Q, [\cdot, \cdot], \alpha)$ is ideally absorbed into $(L, [\cdot, \cdot], \alpha)$, presented as Theorem \ref{thm:3.10}. In Section \ref{se:4}, we focus on semiprime Hom-Lie algebras and construct the maximal algebras of quotients of them and characterize them. In Section \ref{se:5}, we mainly study the relationship between algebras of quotients of Hom-Lie algebras and associative algebras generated by inner derivations of Hom-Lie algebras, and get the main result that $A(Q)$, the associative algebra generated by inner derivations of $(Q, [\cdot, \cdot], \alpha)$, is a left quotients of $A_{0}$ if $(Q, [\cdot, \cdot], \alpha)$ is an algebra of quotients of $(L, [\cdot, \cdot], \alpha)$, where $A_{0}$ denotes the subalgebra of $A(Q)$ satisfying $A_{0}(L) \subseteq L$, written as Theorem \ref{thm:5.11}. In Section \ref{se:6}, we introduce the concept of dense extension first of all. Next, we get a proposition about dense extension, Proposition \ref{prop:6.7}, that every $I \subseteq Q$ is also dense for every essential Hom-ideal $I$ of $(L, [\cdot, \cdot], \alpha)$ if $L \subseteq Q$ is a dense extension and $(Q, [\cdot, \cdot], \alpha)$ is an algebra of quotients of $(L, [\cdot, \cdot], \alpha)$.
\section{Preliminaries}\label{se:2}
\begin{defn}\cite{HLS}
Let $L$ be a vector space over a field $\mathbb{F}$. Define a bilinear map $[\cdot, \cdot] : L \times L \rightarrow L$ and a linear map $\alpha : L \rightarrow L$. If the triple $(L, [\cdot, \cdot], \alpha)$ satisfies the following conditions:
\begin{enumerate}[(1)]
\item $[x, y] = -[y, x]$, $\forall x, y \in L$;
\item $[\alpha(x), [y, z]] + [\alpha(y), [z, x]] + [\alpha(z), [x, y]] = 0$, $\forall x, y, z \in L$.
\end{enumerate}
then $(L, [\cdot, \cdot], \alpha)$ is called a Hom-Lie algebra.
\end{defn}
\begin{defn}\cite{HLS}
A subspace $W$ of a Hom-Lie algebra $(L, [\cdot, \cdot], \alpha)$ is called a Hom-subalgebra of $L$ if it satisfies $\alpha(W) \subseteq W$ and $[W, W] \subseteq W$.
\end{defn}
\begin{defn}\cite{HLS}
A subspace $I$ of a Hom-Lie algebra $(L, [\cdot, \cdot], \alpha)$ is called a Hom-ideal of $L$ if it satisfies $\alpha(I) \subseteq I$ and $[I, L] \subseteq I$.
\end{defn}
\begin{re}\label{re:2.4}
Suppose that $(L, [\cdot, \cdot], \alpha)$ is a Hom-Lie algebra. For any $x \in L$, define $\rm{ad}_{\it{x}} : \it{L \rightarrow L}$ to be a linear map by $\rm{ad}_{\it{x}}\it{(y)} = [\alpha(x), y]$ for any $y \in L$. Then $\rm{ad}_{\it{x}}$ is a derivation if $\alpha$ satisfies
\begin{equation}\label{eq:2.1}
\alpha([x, y]) = [\alpha(x), y] = [x, \alpha(y)],\quad\forall x, y \in L,\tag{2.1}
\end{equation}
which is to say that $\alpha$ belongs to $\alpha^{0}$-centroid. In this sense, $\rm{ad}_{\it{x}}$ is called an inner derivation.
\end{re}
\begin{ex}
Suppose that $L$ is a $4$-dimensional vector over $\mathbb{F}$ with a basis $\{e_{1}, e_{2}, e_{3}, e_{4}\}$. The multiplication is
\[[e_{2}, e_{3}] = [e_{2}, e_{4}] = e_{2},\; [e_{3}, e_{4}]= e_{3},\; \rm{others\; are\; all\; zero}.\]
One can see that $(L, [\cdot, \cdot])$ isn't a Lie algebra since
\[[[e_{2}, e_{3}], e_{4}] + [[e_{3}, e_{4}], e_{2}] + [[e_{4}, e_{2}], e_{3}] = -e_{2} \neq 0.\]
Define $\alpha : L \rightarrow L$ to be a linear map by
\[\alpha(e_{1}) = ae_{1}, \alpha(e_{2}) = \alpha(e_{3}) = 0, \alpha(e_{4}) = be_{1},\]
where $a, b \in \mathbb{F}$. Then $(L, [\cdot, \cdot], \alpha)$ is a non-trivial Hom-Lie algebra satisfying (\ref{eq:2.1}).
\end{ex}
\begin{defn}
For a Hom-subalgebra $H$ of a Hom-Lie algebra $(L, [\cdot, \cdot], \alpha)$, define
\[\rm{Ann}\it{_{L}(H) = \{x \in L | [x, \alpha(y)] = \rm{0},\;\forall \it{y} \in H\}}\]
to be the $\alpha$-annihilator of $H$ in $L$. If $H$ is a Hom-ideal of $L$ and $\alpha$ satisfies (\ref{eq:2.1}), then so is $\rm{Ann}\it{_{L}(H)}$. In particular, if $H = L$, write $\rm{Ann}\it{(L)} = \rm{Ann}\it{_{L}(L)}$.
\end{defn}
\begin{defn}
Let $(L, [\cdot, \cdot], \alpha)$ be a Hom-Lie algebra. An element $x \in L$ is an absolute zero divisor if $(\rm{ad}_{\it{x}})^{2} = \rm{0}$. The Hom-Lie algebra $L$ is said to be nondegenerate if it doesn't contain any nonzero absolute zero divisor.
\end{defn}
\begin{defn}
Suppose that $(L, [\cdot, \cdot], \alpha)$ is a Hom-Lie algebra. Then $L$ is said to be semiprime if for any nonzero Hom-ideal $I$ of $L$, $[I, \alpha(I)] \neq \{0\}$ and prime if for any two nonzero Hom-ideals $I$, $J$ of $L$, $[I, \alpha(J)] \neq \{0\}$.
\end{defn}

In the following, we suppose that all Hom-Lie algebras satisfy $(\ref{eq:2.1})$.
\begin{re}\label{re:2.9}
For a Hom-Lie algebra $(L, [\cdot, \cdot], \alpha)$, we have

$L$ nondegenerate $\Rightarrow$ $L$ semiprime $\Rightarrow$ $\rm{Ann}\it{(L)} = \{\rm{0}\}$.
\end{re}
\begin{proof}
(1). Suppose that $L$ is nondegenerate. If $L$ isn't semiprime, there exists a nonzero Hom-ideal $I$ of $L$ satisfying $[I, \alpha(I)] = \{0\}$. Take $0 \neq x \in I$, then we have $(\rm{ad}_{\it{x}})^{2}(\it{L}) = [\alpha(x), [\alpha(x), L]] \subseteq [\alpha(I), I] = \{\rm{0}\}$, which implies that $x$ is a nonzero absolute zero divisor. Contradict with $L$ is nondegenerate.

(2). Suppose that $L$ is semiprime. If $\rm{Ann}\it{(L)} \neq \{\rm{0}\}$, then $\rm{Ann}\it{(L)}$ is a nonzero Hom-ideal of $L$ satisfying $[\rm{Ann}\it{(L)}, \alpha(\rm{Ann}\it{(L)})]\subseteq [\rm{Ann}\it{(L)}, \alpha(L)] = \{\rm{0}\}$. Contradiction.
\end{proof}

A Hom-ideal $I$ of a Hom-Lie algebra $(L, [\cdot, \cdot], \alpha)$ is said to be essential if every nonzero Hom-ideal of $L$ hits $I$($I \cap J \neq \{0\}$ for every nonzero Hom-ideal $J$ of $L$).
\begin{prop}\label{prop:2.10}
Suppose that $(L, [\cdot, \cdot], \alpha)$ is a Hom-Lie algebra and $I$ a Hom-ideal of $L$.

(1) If $\rm{Ann}\it{_{L}(I)} = \{\rm{0}\}$, then $I$ is essential.

(2) If $L$ is semiprime, then $I \cap \rm{Ann}\it{_{L}(I)} = \{\rm{0}\}$ and $I$ is essential if and only if $\rm{Ann}\it{_{L}(I)} = \{\rm{0}\}$.

\end{prop}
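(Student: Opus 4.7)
The plan is to exploit two basic observations about Hom-ideals that together trivialize both parts. First, for any Hom-ideals $I,J$ of $L$, the bracket $[I,J]$ sits inside $I\cap J$, since $[I,J]\subseteq[I,L]\subseteq I$ and, by anticommutativity, $[I,J]=-[J,I]\subseteq[J,L]\subseteq J$. Second, the twisting identity $(\ref{eq:2.1})$ gives $[x,\alpha(y)]=[\alpha(x),y]$ pointwise, hence $[I,\alpha(J)]=[\alpha(I),J]$ as subspaces; this symmetry lets me transpose any vanishing condition between the two slots of $\mathrm{Ann}_L$.

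For part (1), I would pick an arbitrary nonzero Hom-ideal $J$ of $L$ and examine the subspace $[I,\alpha(J)]$. Since $\alpha(J)\subseteq J$, it lies in $[I,J]\subseteq I\cap J$. If $[I,\alpha(J)]\neq\{0\}$, then $I\cap J\neq\{0\}$ and essentiality of $I$ is witnessed directly. Otherwise $[I,\alpha(J)]=\{0\}$, and rewriting this via $(\ref{eq:2.1})$ and anticommutativity converts it into $[z,\alpha(y)]=0$ for all $y\in I,z\in J$, which says exactly that $J\subseteq\mathrm{Ann}_L(I)=\{0\}$, contradicting $J\neq\{0\}$.

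For part (2), to establish $I\cap\mathrm{Ann}_L(I)=\{0\}$, set $K=I\cap\mathrm{Ann}_L(I)$. Then $K$ is a Hom-ideal (intersection of Hom-ideals) and by the very definition of the annihilator, $[K,\alpha(K)]\subseteq[\mathrm{Ann}_L(I),\alpha(I)]=\{0\}$; semiprimeness of $L$ then forces $K=\{0\}$. For the equivalence, the direction ($\Leftarrow$) is already part (1). For ($\Rightarrow$), recall from the remark after Definition 2.6 that $\mathrm{Ann}_L(I)$ is itself a Hom-ideal (this uses $(\ref{eq:2.1})$); essentiality of $I$ would then force $I\cap\mathrm{Ann}_L(I)\neq\{0\}$ whenever $\mathrm{Ann}_L(I)\neq\{0\}$, which, combined with the disjointness just established, yields $\mathrm{Ann}_L(I)=\{0\}$.

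The only genuine subtlety is spotting that $(\ref{eq:2.1})$ symmetrizes the annihilator condition via $[I,\alpha(J)]=[\alpha(I),J]$, so that the apparent one-sidedness in the definition of $\mathrm{Ann}_L$ causes no obstruction; once that is in hand, the rest is routine bookkeeping with anticommutativity and the defining properties of Hom-ideals, with semiprimeness entering only to kill the self-annihilating Hom-ideal $K$ in part (2).
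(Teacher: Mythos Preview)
Your proof is correct and follows essentially the same approach as the paper. The paper organizes part (1) as a direct contrapositive (assume $I\cap J=\{0\}$ for some nonzero Hom-ideal $J$, then any $0\neq v_0\in J$ satisfies $[v_0,\alpha(I)]\subseteq I\cap J=\{0\}$, so $v_0\in\mathrm{Ann}_L(I)$) rather than your case split on whether $[I,\alpha(J)]$ vanishes, but the underlying observation---that $[J,\alpha(I)]\subseteq I\cap J$ and its vanishing forces $J\subseteq\mathrm{Ann}_L(I)$---is identical; your part (2) matches the paper's proof almost verbatim.
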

\begin{proof}
(1). Otherwise, there exists a nonzero Hom-ideal $J$ of $L$ such that $I \cap J = \{0\}$. Take $0 \neq v_{0} \in J$, we have $[v_{0}, \alpha(I)] \subseteq I \cap J = \{0\}$. So $v_{0} \in \rm{Ann}\it{_{L}(I)}$, which is a contradiction. Hence $I$ is essential.

(2). If not, $[I \cap \rm{Ann}\it{_{L}(I)}, \alpha(I \cap \rm{Ann}\it{_{L}(I)})] \subseteq [\rm{Ann}\it{_{L}(I)}, \alpha(I)] = \{\rm{0}\}$. Note that $I \cap \rm{Ann}\it{_{L}(I)}$ is a nonzero Hom-ideal of $L$, this contradicts with $L$ being semiprime.

Suppose that $I$ is essential. If $\rm{Ann}\it{_{L}(I)} \neq \{\rm{0}\}$, then $I \cap \rm{Ann}\it{_{L}(I)} \neq \{\rm{0}\}$. Contradiction. Hence, $\rm{Ann}\it{_{L}(I)} = \{\rm{0}\}$. Combining with (1), we get that $I$ is essential if and only if $\rm{Ann}\it{_{L}(I)} = \{\rm{0}\}$.
\end{proof}
\begin{prop}\label{prop:2.11}
Suppose that $(L, [\cdot, \cdot], \alpha)$ is a Hom-Lie algebra. Then $L$ is prime if and only if for every nonzero Hom-ideal $I$ of $L$, $\rm{Ann}\it{_{L}(I)} = \{\rm{0}\}$.
\end{prop}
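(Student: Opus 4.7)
The plan is to prove both implications by reducing the primeness condition to a statement about the annihilator of a single Hom-ideal, exploiting the identity (\ref{eq:2.1}) to convert brackets of the form $[I,\alpha(J)]$ into the symmetric form $[J,\alpha(I)]$ that appears in the definition of $\mathrm{Ann}_L(\cdot)$. The definition already records that $\mathrm{Ann}_L(I)$ is itself a Hom-ideal whenever $I$ is, so both sides of the equivalence deal only with Hom-ideals.

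For the forward direction, I would assume $(L,[\cdot,\cdot],\alpha)$ is prime and let $I$ be an arbitrary nonzero Hom-ideal. Towards a contradiction I would suppose $\mathrm{Ann}_L(I)\neq\{0\}$. Since $\mathrm{Ann}_L(I)$ is a Hom-ideal, I then have two nonzero Hom-ideals at my disposal. By the very definition of $\mathrm{Ann}_L(I)$ we have $[\mathrm{Ann}_L(I),\alpha(I)]=\{0\}$; applying skew-symmetry and then the identity $[\alpha(x),y]=[x,\alpha(y)]$ from (\ref{eq:2.1}), this is equivalent to $[I,\alpha(\mathrm{Ann}_L(I))]=\{0\}$, directly violating the primeness of $L$.

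For the converse, I would assume that $\mathrm{Ann}_L(I)=\{0\}$ for every nonzero Hom-ideal $I$, and take two arbitrary nonzero Hom-ideals $I$ and $J$. Suppose for contradiction that $[I,\alpha(J)]=\{0\}$. Using (\ref{eq:2.1}) together with skew-symmetry, for any $y\in I$ and $z\in J$ I can rewrite $0=[y,\alpha(z)]=-[\alpha(z),y]=-[z,\alpha(y)]$, which shows $J\subseteq \mathrm{Ann}_L(I)$. By hypothesis $\mathrm{Ann}_L(I)=\{0\}$, forcing $J=\{0\}$, a contradiction.

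The main conceptual point — and essentially the only nontrivial step — is the use of the identity $[\alpha(x),y]=[x,\alpha(y)]$ to pass between the two bracket expressions; without (\ref{eq:2.1}) the asymmetry of $\alpha$ appearing on only one side in the definition of $\mathrm{Ann}_L$ would obstruct the equivalence. Since the paper's standing assumption after Remark \ref{re:2.9} is precisely that (\ref{eq:2.1}) holds, no further obstacle remains and the argument is very short.
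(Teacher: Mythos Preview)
Your proof is correct and follows essentially the same contradiction strategy as the paper's. One small remark: the paper's argument is slightly more direct in that it never needs to invoke (\ref{eq:2.1}) explicitly --- from $[\mathrm{Ann}_L(J),\alpha(J)]=\{0\}$ one contradicts primeness immediately (taking $I=\mathrm{Ann}_L(J)$), and from $[I,\alpha(J)]=\{0\}$ the definition of $\mathrm{Ann}_L(J)$ gives $I\subseteq\mathrm{Ann}_L(J)$ straight away --- so your emphasis on (\ref{eq:2.1}) as the ``main conceptual point'' is a bit overstated, though of course (\ref{eq:2.1}) is still implicitly used to know that $\mathrm{Ann}_L(I)$ is a Hom-ideal.
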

\begin{proof}
Suppose that $L$ is prime. If there exists a nonzero Hom-ideal $J$ of $L$ such that $\rm{Ann}\it{_{L}(J)} \neq \{\rm{0}\}$. Then $[\rm{Ann}\it{_{L}(J)}, \alpha(J)] = \{\rm{0}\}$ according to the definition of $\rm{Ann}\it{_{L}(J)}$, which contradicts with $L$ being prime.

On the other hand, suppose that for every nonzero Hom-ideal $I$ of $L$, $\rm{Ann}\it{_{L}(I)} = \{\rm{0}\}$. If $L$ isn't prime, there exist two nonzero Hom-ideals $I$ and $J$ of $L$ such that $[I, \alpha(J)] = \{0\}$, which implies that $I \subseteq \rm{Ann}\it{_{L}(J)}$. Contradiction.
\end{proof}
\section{Algebras of quotients of Hom-Lie algebras}\label{se:3}
Inspired by the notion of algebras of quotients of Lie algebras in \cite{S1}, we introduce the notion of algebras of quotients of Hom-Lie algebras.

Suppose that $(L, [\cdot, \cdot], \alpha)$ is a Hom-subalgebra of Hom-Lie algebra $(Q, [\cdot, \cdot], \alpha)$. For every $q \in Q$, set
\[_{L}(q) = \mathbb{F}q + \left\{\sum_{i = 1}^{n}[[[[q, x_{1}^{i}], x_{2}^{i}], \cdots], x_{k_{i}}^{i}] \mid x_{j}^{i} \in L\; \rm{with}\; \it{n}, k_{i} \in \mathbb{N}\right\},\]

That is, $_{L}(q)$ is the linear span in $Q$ of the elements of the form $[[[[q, x_{1}], x_{2}], \cdots], x_{n}]$ and $q$, where $n \in \mathbb{N}$ and $x_{1}, \cdots, x_{n} \in L$. It is clear that $[_{L}(q), L] \subseteq _{L}(q)$. Now we define
\[(L : q) = \{x \in L \mid [x, \alpha(_{L}(q))] \subseteq L\}.\]
Obviously, if $q \in L$, then $(L : q) = L$.
\begin{prop}\label{prop:3.1}
Suppose that $(L, [\cdot, \cdot], \alpha)$ is a Hom-subalgebra of $(Q, [\cdot, \cdot], \alpha)$. Take $q \in Q$. Then $(L : q)$ is a Hom-ideal of $L$. Moreover, it's maximal among the Hom-ideals $I$ of $L$ such that $[I, \alpha(q)] \subseteq L$.
\end{prop}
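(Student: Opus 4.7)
The plan is to handle the proposition in two stages: first, that $(L : q)$ is a Hom-ideal of $L$, and second, that it is the largest Hom-ideal $I$ of $L$ with $[I, \alpha(q)] \subseteq L$. That $(L : q)$ is a subspace is immediate from bilinearity, and $\alpha$-invariance is quick from (\ref{eq:2.1}): for $x \in (L : q)$ and $z \in {}_{L}(q)$ one has $[\alpha(x), \alpha(z)] = \alpha([x, \alpha(z)]) \in \alpha(L) \subseteq L$, so $\alpha(x) \in (L : q)$.

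The substantive part of the Hom-ideal statement is the absorption $[(L : q), L] \subseteq (L : q)$. For $x \in (L : q)$, $y \in L$, $z \in {}_{L}(q)$, I would combine antisymmetry with the Hom-Jacobi identity applied to $(x, y, z)$ to obtain
\[
  [[x, y], \alpha(z)] \;=\; [\alpha(x), [y, z]] + [\alpha(y), [z, x]].
\]
The first summand rewrites by (\ref{eq:2.1}) as $[x, \alpha([y, z])]$, and $[y, z] \in {}_{L}(q)$ since the inclusion $[{}_{L}(q), L] \subseteq {}_{L}(q)$ is recorded just above the proposition, so this piece lies in $L$ by the defining property of $(L : q)$. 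For the second summand I would apply (\ref{eq:2.1}) twice to move $\alpha$ off $y$ and onto $z$, rewriting it as $-[y, [x, \alpha(z)]]$; then $[x, \alpha(z)] \in L$ by definition, and the outer bracket with $y \in L$ lands in $[L, L] \subseteq L$. Together with the obvious $[x, y] \in L$, this gives $[x, y] \in (L : q)$.

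For maximality, suppose $I$ is a Hom-ideal of $L$ with $[I, \alpha(q)] \subseteq L$ and fix $x \in I$; I must show $[x, \alpha(w)] \in L$ for every $w \in {}_{L}(q)$. By linearity this reduces to the generators $w = q$ (handled directly by the hypothesis) and $w = [[\cdots[q, x_{1}], \cdots], x_{k}]$ with $x_{j} \in L$, which I would attack by induction on $k$. Writing $w = [w', x_{k}]$ and using (\ref{eq:2.1}) to pull out the outer $\alpha$, $[x, \alpha(w)] = \alpha([x, [w', x_{k}]])$; applying Hom-Jacobi to $(x, w', x_{k})$ turns this into $-[\alpha(w'), [x_{k}, x]] - [\alpha(x_{k}), [x, w']]$. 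The first piece lies in $L$ because $[x_{k}, x] \in I$ and the induction hypothesis on the shorter $w'$ gives $[[x_{k}, x], \alpha(w')] \in L$. For the second piece I would shift $\alpha$ via (\ref{eq:2.1}) to obtain $[x_{k}, [x, \alpha(w')]]$, where $[x, \alpha(w')] \in L$ by induction, so the outer bracket with $x_{k} \in L$ again falls in $[L, L] \subseteq L$.

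The main obstacle is purely Hom-algebraic bookkeeping: Hom-Jacobi always places $\alpha$ on one outer factor, while the definition of $(L : q)$ demands $\alpha$ on the argument inside the bracket with $x$, so the real content of both parts is a careful use of (\ref{eq:2.1}) to shuttle $\alpha$'s into the positions where the defining property of $(L : q)$, the inductive hypothesis, or the assumption $[I, \alpha(q)] \subseteq L$ can be applied directly. No deeper structural input beyond (\ref{eq:2.1}), Hom-Jacobi, and $[{}_{L}(q), L] \subseteq {}_{L}(q)$ appears to be needed.
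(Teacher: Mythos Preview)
Your proposal is correct and follows essentially the same approach as the paper: both use (\ref{eq:2.1}) to move $\alpha$'s into position, apply Hom-Jacobi to split $[[x,y],\alpha(z)]$ into the two summands you describe, and establish maximality by induction on the length of the nested commutator $[[\cdots[q,x_1],\cdots],x_k]$. The only cosmetic differences are that the paper writes the computations at the level of the whole set ${}_L(q)$ rather than a fixed element $z$, and starts its induction at $n=1$ (your $k=1$ case) rather than at the base $w=q$.
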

\begin{proof}
For any $x \in (L : q)$, $[\alpha(x), \alpha(_{L}(q))] = \alpha([x, \alpha(_{L}(q))]) \subseteq \alpha(L) \subseteq L$, which implies that $\alpha(x) \in (L : q)$, i.e., $\alpha((L : q)) \subseteq (L : q)$.

For any $x \in (L : q)$, $y \in L$, $[[x, y], \alpha(_{L}(q))] = [\alpha(x), [y, _{L}(q)]] + [[x, _{L}(q)], \alpha(y)] = [x, \alpha([y, _{L}(q)])] + [[x, \alpha(_{L}(q))], y] \subseteq [x, \alpha(_{L}(q))] + [L, y] \subseteq L$, which implies that $[x, y] \in (L : q)$, i.e., $[(L : q), L] \subseteq (L : q)$. Hence, $(L : q)$ is a Hom-ideal of $L$.

Suppose that $I$ is a Hom-ideal of $L$ such that $[I, \alpha(q)] \subseteq L$. We'll show that for any $x_{1}, \cdots, x_{n} \in L$ and $n \in \mathbb{N}$, $[I, \alpha([[[[q, x_{1}], x_{2}], \cdots], x_{n}])] \subseteq L$. We'll prove it by induction on $n$.

When $n = 1$, for any $x_{1} \in L$, $[I, \alpha([q, x_{1}])] = [\alpha(I), [q, x_{1}]] = [\alpha(q), [I, x_{1}]] + [\alpha(x_{1}), [q, I]] = [\alpha(q), [I, x_{1}]] + [x_{1}, [\alpha(q), I]] \subseteq [\alpha(q), I] + [x_{1}, L] \subseteq L$.

Suppose that when $n = k$, $[I, \alpha([[[[q, x_{1}], x_{2}], \cdots], x_{k}])] \subseteq L$ for any $x_{1}, \cdots, x_{k} \in L$.

When $n = k + 1$, for any $x_{1}, \cdots, x_{k}, x_{k + 1} \in L$, we denote $[[[[q, x_{1}], x_{2}], \cdots], x_{k}]$ by $z$. By hypothesis, we have $[I, \alpha(z)] \subseteq L$.

Therefore, $[I, \alpha([[[[[q, x_{1}], x_{2}], \cdots], x_{k}], x_{k + 1}])] = [I, \alpha([z, x_{k + 1}])] = [\alpha(I), [z, x_{k + 1}]] = [\alpha(z), [I, x_{k + 1}]] + [\alpha(x_{k + 1}), [z, I]] = [\alpha(z), [I, x_{k + 1}]] + [x_{k + 1}, [\alpha(z), I]] \subseteq [\alpha(z), I] + [x_{k + 1}, L] \subseteq L$.

Hence, we have $[I, \alpha(_{L}(q))] \subseteq L$, i.e., $I \subseteq (L : q)$.
\end{proof}
\begin{defn}
Suppose that $(L, [\cdot, \cdot], \alpha)$ is a Hom-subalgebra of $(Q, [\cdot, \cdot], \alpha)$. Then $Q$ is called an algebra of quotients of $L$, if given $p, q \in Q$ with $p \neq 0$, there exists $x \in L$ such that $[x, \alpha(p)] \neq 0$ and $x \in (L : q)$.
\end{defn}
\begin{prop}\label{prop:3.3}
Let $(L, [\cdot, \cdot], \alpha)$ be a Hom-subalgebra of $(Q, [\cdot, \cdot], \alpha)$.
\begin{enumerate}[(1)]
\item If $\rm{Ann}\it{(L)} = \{\rm{0}\}$, then $L$ is an algebra of quotients of itself.
\item If $Q$ is an algebra of quotients of $L$, then $\rm{Ann}\it{_{Q}(L)} = \rm{Ann}\it{(L)} = \{\rm{0}\}$.
\end{enumerate}
\end{prop}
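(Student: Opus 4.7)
The plan is to handle both parts as direct applications of the definition of ``algebra of quotients'' together with the symmetry implicit in condition (\ref{eq:2.1}). The only computational fact I will repeatedly use is that, by (\ref{eq:2.1}) and antisymmetry,
\[
[x,\alpha(p)] \;=\; \alpha([x,p]) \;=\; -\alpha([p,x]) \;=\; -[p,\alpha(x)]
\]
for all $x,p$ in the ambient Hom-Lie algebra.

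For (1), the key observation is that when we take $Q = L$ the set $_{L}(q)$ already lies inside $L$, so trivially $(L : q) = L$ for every $q \in L$, and the requirement ``$x \in (L:q)$'' in the definition of algebra of quotients becomes vacuous. It then suffices, given an arbitrary pair $p, q \in L$ with $p \neq 0$, to exhibit some $x \in L$ with $[x,\alpha(p)] \neq 0$. Since $\mathrm{Ann}(L) = \{0\}$ and $p \neq 0$, there is $y \in L$ with $[p,\alpha(y)] \neq 0$, and the displayed identity above (with $x$ and $p$ swapped) gives $[y,\alpha(p)] = -[p,\alpha(y)] \neq 0$, so $x := y$ works.

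For (2), I first note that the inclusion $\mathrm{Ann}(L) \subseteq \mathrm{Ann}_{Q}(L)$ is immediate from the definitions (the defining condition is the same, just tested on a subset of elements), so it is enough to prove $\mathrm{Ann}_{Q}(L) = \{0\}$. Suppose, toward a contradiction, that $p \in \mathrm{Ann}_{Q}(L)$ with $p \neq 0$. Apply the algebra-of-quotients property to the pair $(p,q)$ for any chosen $q \in Q$ (say $q = p$): there exists $x \in L$ with $[x,\alpha(p)] \neq 0$. But the displayed identity gives $[x,\alpha(p)] = -[p,\alpha(x)]$, and since $p \in \mathrm{Ann}_{Q}(L)$ and $x \in L$ this quantity is $0$. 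Contradiction.

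There is no real obstacle: the proposition is essentially bookkeeping from the definitions, and the only ingredient beyond definition-chasing is the symmetry identity $[x,\alpha(p)] = -[p,\alpha(x)]$, which follows at once from (\ref{eq:2.1}) and antisymmetry of the bracket.
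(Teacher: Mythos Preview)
Your proof is correct and follows essentially the same approach as the paper's own proof: both use the symmetry identity $[x,\alpha(p)] = -[p,\alpha(x)]$ (coming from (\ref{eq:2.1}) and antisymmetry) together with the observation that $(L:q) = L$ when $q \in L$. The only cosmetic difference is that in part~(2) the paper repeats the argument to get $\mathrm{Ann}(L)=\{0\}$, whereas you deduce it from the inclusion $\mathrm{Ann}(L) \subseteq \mathrm{Ann}_Q(L)$, which is equally valid and slightly cleaner.
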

\begin{proof}
(1). For any $p, q \in L$ with $p \neq 0$, we have $p \notin \rm{Ann}\it{(L)}$. There exists $x \in L$ such that $[p, \alpha(x)] \neq 0$, so $[x, \alpha(p)] \neq 0$. It's obvious that $x \in (L : q)$. Therefore $L$ is an algebra of quotients of itself.

(2). For any $0 \neq q \in Q$, there exists $x \in L$ such that $[x, \alpha(q)] \neq 0$, so $[q, \alpha(x)] \neq 0$. Hence $q \notin \rm{Ann}\it{_{Q}(L)}$. Therefore $\rm{Ann}\it{_{Q}(L)} = \{\rm{0}\}$. Similarly, we have $\rm{Ann}\it{(L)} = \{\rm{0}\}$.
\end{proof}

The above proposition says for a Hom-Lie algebra $(L, [\cdot, \cdot], \alpha)$, that $\rm{Ann}\it{(L)} = \{\rm{0}\}$ is a sufficient and necessary condition such that $L$ has algebras of quotients.

We will prove that some properties of a Hom-Lie algebra $(L, [\cdot, \cdot], \alpha)$ can be inherited by its algebras of quotients $Q$. Actually, $Q$ just needs a weaker condition.
\begin{defn}
Suppose that $(L, [\cdot, \cdot], \alpha)$ is a Hom-subalgebra of $(Q, [\cdot, \cdot], \alpha)$. Then $Q$ is called a weak algebra of quotients of $L$, if given $0 \neq q \in Q$, there exists $x \in L$ such that $0 \neq [x, \alpha(q)] \in L$.
\end{defn}
\begin{re}
Every algebra of quotients of a Hom-Lie algebra $(L, [\cdot, \cdot], \alpha)$ is a weak algebra of quotients.
\end{re}
\begin{prop}\label{prop:3.6}
Let $(Q, [\cdot, \cdot], \alpha)$ be a weak algebra of quotients of the Hom-Lie algebra $(L, [\cdot, \cdot], \alpha)$.
\begin{enumerate}[(1)]
\item If $I$ is a nonzero Hom-ideal of $Q$, then $I \cap L$ is a nonzero Hom-ideal of $L$.
\item If $L$ is semiprime(prime), so is $Q$.
\end{enumerate}
\end{prop}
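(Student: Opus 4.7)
My plan is to handle the two parts in sequence, with (1) being the engine that powers (2).

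For part (1), the task splits into showing $I \cap L$ is nonzero and that it is a Hom-ideal of $L$. The nonzero part is exactly where the weak quotient hypothesis is used: pick any $0 \neq q \in I$, and apply the definition to produce $x \in L$ with $0 \neq [x, \alpha(q)] \in L$. The key observation is that $[x, \alpha(q)] = \alpha([x,q])$ by condition (\ref{eq:2.1}), and since $q \in I$ and $I$ is a Hom-ideal we have $[x,q] \in I$, hence $\alpha([x,q]) \in \alpha(I) \subseteq I$. Thus $0 \neq [x, \alpha(q)] \in I \cap L$. Closure under $\alpha$ and under bracketing with $L$ is then automatic from the fact that $I$ is a Hom-ideal of $Q$ and $L$ is a Hom-subalgebra of $Q$, so $I \cap L$ is invariant under $\alpha$ and satisfies $[I \cap L, L] \subseteq I \cap L$.

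For part (2), I would deduce each property of $Q$ by pulling it back to $L$ via (1). Suppose $L$ is semiprime and let $I$ be a nonzero Hom-ideal of $Q$. By (1), $I \cap L$ is a nonzero Hom-ideal of $L$, so semiprimeness of $L$ gives $[I \cap L, \alpha(I \cap L)] \neq \{0\}$. Since $[I \cap L, \alpha(I \cap L)] \subseteq [I, \alpha(I)]$, we conclude $[I, \alpha(I)] \neq \{0\}$, so $Q$ is semiprime. The prime case is analogous: given nonzero Hom-ideals $I,J$ of $Q$, intersect each with $L$ to obtain nonzero Hom-ideals $I \cap L$, $J \cap L$ of $L$, apply primeness of $L$ to get $[I \cap L, \alpha(J \cap L)] \neq \{0\}$, and note this is contained in $[I, \alpha(J)]$.

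The only point requiring care is the very first step of part (1), namely guaranteeing that the element $[x, \alpha(q)]$ produced by the weak-quotient hypothesis actually lies in $I$, not just in $L$. This is exactly where the $\alpha$-centroid identity (\ref{eq:2.1}) is essential, as it rewrites $[x, \alpha(q)]$ in a form $\alpha([x,q])$ that visibly lies in $I$. Everything else is a direct verification, so I anticipate no further obstacles.
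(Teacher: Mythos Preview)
Your proof is correct and follows essentially the same approach as the paper: use the weak-quotient hypothesis to produce a nonzero element of $I \cap L$, verify the Hom-ideal properties directly, and then pull semiprimeness/primeness back via the intersection. One minor point: the paper shows $[x,\alpha(q)] \in I$ more directly by noting $\alpha(q) \in \alpha(I) \subseteq I$ and then $[x,\alpha(q)] \in [Q,I] \subseteq I$, so your claim that identity (\ref{eq:2.1}) is ``essential'' at that step is a slight overstatement, though your argument via (\ref{eq:2.1}) is of course also valid.
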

\begin{proof}
(1). Since $I$ is a Hom-ideal of $Q$, $\alpha(I) \subseteq I$. Hence $\alpha(I \cap L) \subseteq \alpha(I) \cap \alpha(L) \subseteq I \cap L$.

Note that $I \neq \{0\}$, we can take $0 \neq x_{0} \in I$. There exists $x \in L$ such that $0 \neq [x, \alpha(x_{0})] \in L$. $[x, \alpha(x_{0})] \in I$ since $I$ is a Hom-ideal of $Q$. So we have $0 \neq [x, \alpha(x_{0})] \in I \cap L$.

Moreover, $[I \cap L, L] \subseteq [I, Q] \subseteq I$ and $[I \cap L, L] \subseteq [L, L] \subseteq L$. Hence, $[I \cap L, L] \subseteq I \cap L$. Therefore, $I \cap L$ is a nonzero Hom-ideal of $L$.

(2). Suppose that $L$ is semiprime. If $Q$ isn't semiprime, there exists a nonzero Hom-ideal $I$ of $Q$ such that $[I, \alpha(I)] = \{0\}$. By (1), $I \cap L$ is a nonzero Hom-ideal of $L$ and $[I \cap L, \alpha(I \cap L)] \subseteq [I, \alpha(I)] = \{0\}$. Contradict with $L$ being semiprime.

Similarly, we have $Q$ is prime if $L$ is prime.
\end{proof}
\begin{defn}
Let $(L, [\cdot, \cdot], \alpha)$ be a Hom-subalgebra of $(Q, [\cdot, \cdot], \alpha)$. Then $Q$ is called ideally absorbed into $L$ if, for every $0 \neq q \in Q$, there exists a Hom-ideal $I$ of $L$ with $\rm{Ann}\it{_{L}(I)} = \{\rm{0}\}$ such that $\{0\} \neq [I, \alpha(q)] \in L$.
\end{defn}
\begin{prop}\label{prop:3.8}
Suppose that $(L, [\cdot, \cdot], \alpha)$ is a Hom-subalgebra of $(Q, [\cdot, \cdot], \alpha)$. Take $q \in Q$.

(1) If $Q$ is an algebra of quotients of $L$, then $(L : q)$ is an essential Hom-ideal of $L$. Moreover, $\rm{Ann}\it{_{L}((L : q))} = \{\rm{0}\}$.

(2) If $Q$ is ideally absorbed into $L$, then $(L : q)$ is an essential Hom-ideal of $L$. Moreover, $\rm{Ann}\it{_{L}((L : q))} = \{\rm{0}\}$.

\end{prop}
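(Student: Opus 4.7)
The plan is to reduce both parts to the single assertion $\mathrm{Ann}_L((L:q)) = \{0\}$. Once this is shown, Proposition \ref{prop:2.10}(1) immediately gives that $(L:q)$ is an essential Hom-ideal of $L$, so the "moreover" clauses are actually the main content in each case. We already know from Proposition \ref{prop:3.1} that $(L:q)$ is a Hom-ideal of $L$, so the only thing left in either setting is to verify that its annihilator in $L$ is zero.

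For part (1), I would argue by contradiction. Suppose there exists $0 \neq x \in \mathrm{Ann}_L((L:q))$, i.e., $[x, \alpha(y)] = 0$ for all $y \in (L:q)$. Apply the defining condition of an algebra of quotients to the pair $(p, q) := (x, q)$ to produce $z \in L$ with $z \in (L:q)$ and $[z, \alpha(x)] \neq 0$. Since $z \in (L:q)$, the hypothesis on $x$ forces $[x, \alpha(z)] = 0$. But combining the skew-symmetry of the bracket with the $\alpha^{0}$-centroid identity (\ref{eq:2.1}) gives $[x, \alpha(z)] = [\alpha(x), z] = -[z, \alpha(x)]$, whence $[z, \alpha(x)] = 0$, contradicting the choice of $z$. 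Thus $\mathrm{Ann}_L((L:q)) = \{0\}$, and essentiality follows from Proposition \ref{prop:2.10}(1).

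For part (2), I would use the maximality statement in Proposition \ref{prop:3.1} as the key tool. By the definition of "ideally absorbed," applied to the given $q$, there is a Hom-ideal $I$ of $L$ with $\mathrm{Ann}_L(I) = \{0\}$ and $\{0\} \neq [I, \alpha(q)] \subseteq L$. The inclusion $[I, \alpha(q)] \subseteq L$ together with Proposition \ref{prop:3.1} forces $I \subseteq (L:q)$, and monotonicity of the annihilator then yields $\mathrm{Ann}_L((L:q)) \subseteq \mathrm{Ann}_L(I) = \{0\}$. Essentiality again follows from Proposition \ref{prop:2.10}(1). The only subtle point in the whole argument is the calculation in part (1): one must invoke both the antisymmetry of the bracket and the centroid condition (\ref{eq:2.1}) to transfer the inequality $[z, \alpha(x)] \neq 0$ into the inequality $[x, \alpha(z)] \neq 0$ that actually contradicts $x \in \mathrm{Ann}_L((L:q))$; everything else is a direct unwinding of definitions.
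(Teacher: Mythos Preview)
Your argument is correct. Part (2) matches the paper's proof essentially verbatim: find a Hom-ideal $I$ with $\mathrm{Ann}_L(I)=\{0\}$ and $[I,\alpha(q)]\subseteq L$, invoke the maximality in Proposition~\ref{prop:3.1} to get $I\subseteq (L:q)$, and conclude $\mathrm{Ann}_L((L:q))\subseteq\mathrm{Ann}_L(I)=\{0\}$, with essentiality following from Proposition~\ref{prop:2.10}(1).

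For part (1) you take a slightly different and more economical route than the paper. The paper first proves essentiality of $(L:q)$ directly (by producing, for any nonzero Hom-ideal $J$ and $0\neq y\in J$, an element $x\in(L:q)$ with $0\neq[x,\alpha(y)]\in J\cap(L:q)$), and only afterwards uses that essentiality to pick $0\neq u\in \mathrm{Ann}_L((L:q))\cap(L:q)$ and reach a contradiction. You bypass the first step entirely: given $0\neq x\in\mathrm{Ann}_L((L:q))$, you feed the pair $(x,q)$ into the quotient condition and immediately obtain the contradiction, then recover essentiality from Proposition~\ref{prop:2.10}(1). This has the advantage of making the two parts formally parallel (both reduce to ``annihilator zero $\Rightarrow$ essential''), whereas the paper's approach gives a self-contained proof of essentiality that does not route through the annihilator. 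Either way the key computation is the same use of skew-symmetry and (\ref{eq:2.1}) that you isolate.
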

\begin{proof}
(1). Let $I$ be a nonzero Hom-ideal of $L$. Take $0 \neq y \in I$. Apply that $Q$ is an algebra of quotients of $L$ to find $x \in L$ such that $0 \neq [x, \alpha(y)]$ and $x \in (L : q)$. Since $(L : q)$ is a Hom-ideal of $L$, $[x, \alpha(y)] \in I \cap (L : q)$. This implies that $I \cap (L : q) \neq \{0\}$. Therefore $(L : q)$ is an essential Hom-ideal of $L$.

Suppose that $\rm{Ann}\it{_{L}((L : q))} \neq \{\rm{0}\}$. Then $\rm{Ann}\it{_{L}((L : q))} \cap (L : q) \neq \{\rm{0}\}$ since $(L : q)$ is essential. Take $0 \neq u \in \rm{Ann}\it{_{L}((L : q))} \cap (L : q)$. Apply that $Q$ is an algebra of quotients of $L$ to find $x \in L$ such that $0 \neq [x, \alpha(u)]$ and $x \in (L : q)$. However, $[x, \alpha(u)] = -[u, \alpha(x)] = 0$ since $u \in \rm{Ann}\it{_{L}((L : q))}$. Contradiction.

(2). Since $Q$ is ideally absorbed into $L$, there exists a nonzero Hom-ideal $I$ of $L$ with $\rm{Ann}\it{_{L}(I)} = \{\rm{0}\}$ such that $\{0\} \neq [I, \alpha(q)] \subseteq L$. According to the proof of Proposition \ref{prop:3.1}, $[I, \alpha(_{L}(q))] \subseteq L$. So $I \subseteq (L : q)$. Hence, $\rm{Ann}\it{_{L}((L : q))} \subseteq \rm{Ann}\it{_{L}(I)} = \{\rm{0}\}$. According to Proposition \ref{prop:2.10} (1), $(L : q)$ is essential.
\end{proof}
\begin{lem}\label{le:3.9}
Let $(Q, [\cdot, \cdot], \alpha)$ be a weak algebra of quotients of Hom-Lie algebra $(L, [\cdot, \cdot], \alpha)$. Let $I$ be a Hom-ideal of $L$ with $\rm{Ann}\it{_{L}(I)} = \{\rm{0}\}$. Then there is no nonzero element $x$ in $Q$ such that $[\alpha(x), I] = \{0\}$.
\end{lem}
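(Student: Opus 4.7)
The plan is to argue by contradiction. Suppose there exists $0 \neq x \in Q$ with $[\alpha(x), I] = \{0\}$. Since $Q$ is a weak algebra of quotients of $L$, I can pick $y \in L$ with $a := [y, \alpha(x)] \in L$ and $a \neq 0$. The goal is then to show $a \in \mathrm{Ann}_L(I)$, which, by the hypothesis $\mathrm{Ann}_L(I) = \{0\}$, forces $a = 0$ — a contradiction.

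To compute $[a, \alpha(z)] = [[y, \alpha(x)], \alpha(z)]$ for an arbitrary $z \in I$, I would apply the Hom-Jacobi identity to the triple $(y, \alpha(x), z)$, rewriting
\[
 [\alpha(z), [y, \alpha(x)]] = -[\alpha(y), [\alpha(x), z]] - [\alpha^{2}(x), [z, y]].
\]
The first term on the right vanishes by the standing assumption $[\alpha(x), I] = \{0\}$. For the second term, I need to know that $\alpha^{2}(x)$ also annihilates $I$; this is the small technical step.

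The key observation for that step is that (\ref{eq:2.1}) gives $\alpha([\alpha(x), w]) = [\alpha^{2}(x), w] = [\alpha(x), \alpha(w)]$ for every $w \in I$. Since $\alpha(I) \subseteq I$, the right-hand side is zero by hypothesis, so $[\alpha^{2}(x), I] = \{0\}$. Because $I$ is a Hom-ideal and $y \in L$, we have $[z, y] \in I$, so $[\alpha^{2}(x), [z, y]] = 0$. Combining the two cancellations yields $[a, \alpha(z)] = 0$ for every $z \in I$, i.e.\ $a \in \mathrm{Ann}_L(I)$, and the contradiction follows.

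The only mildly subtle point is noticing that one needs $\alpha^{2}(x)$, not just $\alpha(x)$, to commute with $I$; once (\ref{eq:2.1}) is invoked to promote the annihilation from $\alpha(x)$ to $\alpha^{2}(x)$, the remainder is a routine application of Hom-Jacobi together with the Hom-ideal property of $I$.
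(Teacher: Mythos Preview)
Your proof is correct and follows essentially the same approach as the paper: argue by contradiction, use the weak-quotients property to produce $0\neq a=[y,\alpha(x)]\in L$, and then show $a\in\mathrm{Ann}_{L}(I)$ via the Hom--Jacobi identity together with~(\ref{eq:2.1}). The only cosmetic difference is that the paper first factors out an $\alpha$ via~(\ref{eq:2.1}), writing $[[y,\alpha(x)],\alpha(z)]=\alpha([[y,x],\alpha(z)])$, and then applies Hom--Jacobi to the triple $(x,y,z)$, whereas you apply Hom--Jacobi directly to $(y,\alpha(x),z)$ and afterward promote the annihilation from $\alpha(x)$ to $\alpha^{2}(x)$ using~(\ref{eq:2.1}); the two bookkeepings are equivalent.
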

\begin{proof}
Suppose that there exists $0 \neq x \in Q$ such that $[\alpha(x), I] = \{0\}$. Apply that $Q$ is a weak algebra of quotients of $L$ to find $y \in L$ such that $0 \neq [y, \alpha(x)] \in L$. Since $\rm{Ann}\it{_{L}(I)} = \{\rm{0}\}$, $[[y, \alpha(x)], \alpha(I)] \neq \{0\}$. However,
\begin{align*}
&[[y, \alpha(x)], \alpha(I)] = \alpha([[y, x], \alpha(I)]) = \alpha([\alpha(x), [I, y]] + [\alpha(y), [x, I]])\\
&= \alpha([\alpha(x), [I, y]] + [y, [\alpha(x), I]]) = \{0\}.
\end{align*}
This is a contradiction.
\end{proof}
\begin{thm}\label{thm:3.10}
Suppose that $(L, [\cdot, \cdot], \alpha)$ is a Hom-subalgebra of $(Q, [\cdot, \cdot], \alpha)$. Then $Q$ is an algebra of quotients of $L$ if and only if $Q$ is ideally absorbed into $L$.
\end{thm}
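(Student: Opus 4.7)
The plan is to prove the equivalence by taking the Hom-ideal $I = (L:q)$ itself as the witness in the forward direction, and by combining the ideally absorbed property with Lemma~\ref{le:3.9} in the reverse direction.

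For the forward implication, suppose $Q$ is an algebra of quotients of $L$ and fix $0 \neq q \in Q$. I would set $I = (L:q)$. By Proposition~\ref{prop:3.8}(1), $(L:q)$ is essential and $\mathrm{Ann}_L((L:q)) = \{0\}$. Since $q \in {_L(q)}$ by definition, the containment $[(L:q), \alpha(q)] \subseteq L$ is immediate from the definition of $(L:q)$. To rule out $[(L:q), \alpha(q)] = \{0\}$, I would apply the defining property of an algebra of quotients with the choice $p = q$: there must exist $x \in (L:q)$ with $[x, \alpha(q)] \neq 0$. Hence $I = (L:q)$ witnesses that $Q$ is ideally absorbed into $L$.

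For the reverse implication, I would first observe that any ideally absorbed extension is automatically a weak algebra of quotients (pick any $y \in I$ realising a nonzero bracket $[y, \alpha(q)]$), so Lemma~\ref{le:3.9} is available. Given $p, q \in Q$ with $p \neq 0$, I split on whether $q = 0$. When $q = 0$, one has $(L:q) = L$ and the weak algebra of quotients property applied to $p$ supplies the required $x$. When $q \neq 0$, I invoke the ideally absorbed hypothesis to extract a Hom-ideal $I$ of $L$ with $\mathrm{Ann}_L(I) = \{0\}$ and $\{0\} \neq [I, \alpha(q)] \subseteq L$. The induction performed inside the proof of Proposition~\ref{prop:3.1} upgrades this to $[I, \alpha({_L(q)})] \subseteq L$, so $I \subseteq (L:q)$. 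Finally, Lemma~\ref{le:3.9} applied to the nonzero element $p$ and the ideal $I$ yields some $x \in I$ with $[\alpha(p), x] \neq 0$, hence $[x, \alpha(p)] \neq 0$ by skew-symmetry; since $x \in I \subseteq (L:q)$, this $x$ fulfils the definition of algebra of quotients.

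There is no serious obstacle. The only subtlety worth double-checking is the containment $I \subseteq (L:q)$, which relies on rerunning the induction buried in the proof of Proposition~\ref{prop:3.1}; that argument goes through verbatim since it only used $[I, \alpha(q)] \subseteq L$, the Hom-ideal structure of $I$, and the Hom-Jacobi identity. A second minor point is that the ideally absorbed hypothesis is stated only for nonzero elements, which is why the $q = 0$ case of the reverse direction must be handled separately by weak algebra of quotients.
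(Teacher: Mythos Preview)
Your proposal is correct and follows essentially the same route as the paper: both directions hinge on the ideal $(L:q)$, Proposition~\ref{prop:3.8}, the observation that ideally absorbed implies weak algebra of quotients, and Lemma~\ref{le:3.9}. The only cosmetic differences are that in the forward direction you obtain $[(L:q),\alpha(q)]\neq\{0\}$ directly from the definition with $p=q$ (the paper invokes Lemma~\ref{le:3.9} instead), and in the reverse direction you treat $q=0$ separately and work with $I$ rather than passing through Proposition~\ref{prop:3.8}(2); neither changes the substance of the argument.
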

\begin{proof}
Suppose that $Q$ is an algebra of quotients of $L$. For any $0 \neq q \in Q$, $(L : q)$ is a Hom-ideal of $L$ with $\rm{Ann}\it{_{L}((L : q))} = \{\rm{0}\}$ according to Proposition \ref{prop:3.8} (1). By Lemma \ref{le:3.9}, $[\alpha(q), (L : q)] \neq \{0\}$. According to the definition of $(L : q)$, $[(L : q), \alpha(q)] \subseteq L$. Hence, $Q$ is ideally absorbed into $L$.

Now suppose that $Q$ is ideally absorbed into $L$. Then $Q$ is a weak algebra of quotients of $L$. For any $p, q \in Q$ with $p \neq 0$, we have $\rm{Ann}\it{_{L}((L : q))} = \{\rm{0}\}$. Hence, $[\alpha(p), (L : q)] = [p, \alpha((L : q))] \neq \{0\}$. Therefore, there exists $x \in (L : q)$ such that $[\alpha(p), x] \neq 0$. Hence, $Q$ is an algebra of quotients of $L$.
\end{proof}

In the proof of Theorem \ref{thm:3.10}, we can conclude that $Q$ is an algebra of quotients of $L$ if and only if $Q$ is a weak algebra of quotients of $L$ satisfying $\rm{Ann}\it{_{L}((L : q))} = \{\rm{0}\}$ for every $q \in Q$.
\begin{prop}\label{prop:3.11}
Suppose that $(L, [\cdot, \cdot], \alpha)$ is a Hom-subalgebra of $(Q, [\cdot, \cdot], \alpha)$ and $Q$ a weak algebra of quotients of $L$. Then for every Hom-ideal $I$ of $L$, $\rm{Ann}\it{_{L}(I)} = \{\rm{0}\}$ implies $\rm{Ann}\it{_{Q}(I)} = \{\rm{0}\}$.
\end{prop}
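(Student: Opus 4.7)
The plan is to recognize that Proposition \ref{prop:3.11} is essentially a reformulation of Lemma \ref{le:3.9} via the compatibility identity (\ref{eq:2.1}). By the definition of $\mathrm{Ann}_Q(I)$, the conclusion $\mathrm{Ann}_Q(I) = \{0\}$ is equivalent to saying that no nonzero $x \in Q$ satisfies $[x, \alpha(y)] = 0$ for all $y \in I$, i.e., $[x, \alpha(I)] = \{0\}$. This is very close in form to the statement of Lemma \ref{le:3.9}, which forbids $[\alpha(x), I] = \{0\}$ for nonzero $x$.

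I would argue by contraposition. Suppose for contradiction that there exists a nonzero $x \in \mathrm{Ann}_Q(I)$, so $[x, \alpha(y)] = 0$ for every $y \in I$. Using the identity $[x, \alpha(y)] = [\alpha(x), y]$ from (\ref{eq:2.1}), this immediately rewrites as $[\alpha(x), y] = 0$ for every $y \in I$, that is $[\alpha(x), I] = \{0\}$. Since $Q$ is a weak algebra of quotients of $L$ and $\mathrm{Ann}_L(I) = \{0\}$ by hypothesis, Lemma \ref{le:3.9} applies and forces $x = 0$, contradicting the choice of $x$.

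I do not anticipate any substantive obstacle: the proof is a one-line corollary of Lemma \ref{le:3.9}, with the only real content being the translation between $[x, \alpha(I)] = \{0\}$ and $[\alpha(x), I] = \{0\}$, which is immediate from the standing assumption (\ref{eq:2.1}). If anything, the only point worth flagging is that this step genuinely uses (\ref{eq:2.1}), so the reader should recall that this identity is in force throughout the paper (as declared after Remark \ref{re:2.9}).
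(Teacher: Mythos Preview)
Your proof is correct. You recognize that Proposition~\ref{prop:3.11} is essentially Lemma~\ref{le:3.9} restated through the identity~(\ref{eq:2.1}), and you simply invoke that lemma after the one-line translation $[x,\alpha(I)]=\{0\}\iff[\alpha(x),I]=\{0\}$.

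The paper takes a slightly different route: rather than citing Lemma~\ref{le:3.9}, it reproves the same content from scratch. Starting with $0\neq q\in\mathrm{Ann}_Q(I)$, it uses the weak-quotients hypothesis to find $x\in L$ with $0\neq[x,\alpha(q)]\in L$, then $\mathrm{Ann}_L(I)=\{0\}$ to find $y\in I$ with $[[x,\alpha(q)],\alpha(y)]\neq 0$, and finally expands this bracket via the Hom--Jacobi identity and~(\ref{eq:2.1}) to reach a contradiction. If you compare this line-by-line with the proof of Lemma~\ref{le:3.9}, you will see it is the same computation with the roles of $x$ and $q$ permuted. Your approach is more economical precisely because it notices this duplication; the paper's approach is self-contained but redundant. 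Either way, both arguments rest on the same underlying Jacobi computation, so the difference is one of packaging rather than substance.
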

\begin{proof}
Suppose that $0 \neq q \in \rm{Ann}\it{_{Q}(I)}$. By hypothesis, there exists an element $x$ in $L$ such that $0 \neq [x, \alpha(q)] \in L$. Since $\rm{Ann}\it{_{L}(I)} = \{\rm{0}\}$, there exists $y \in I$ such that $[[x, \alpha(q)], \alpha(y)] \neq 0$. However,
\begin{align*}
&[[x, \alpha(q)], \alpha(y)] = \alpha([[x, q], \alpha(y)]) = \alpha([[y, q], \alpha(x)] + [[x, y], \alpha(q)])\\
&= \alpha([[\alpha(y), q], x] + [\alpha([x, y]), q]) \subseteq \alpha([[\alpha(y), q], x] + [\alpha(I), q]) = 0.
\end{align*}
This is a contradiction.
\end{proof}
\begin{no}
Denote by $\mathscr{J}_{e}(L)$ the set of all essential Hom-ideals of a Hom-Lie algebra $(L, [\cdot, \cdot], \alpha)$.
\end{no}

Clearly, given $I, J \in \mathscr{J}_{e}(L)$, we have $I \cap J \in \mathscr{J}_{e}(L)$. If $L$ is semiprime, then $I^{2} \in \mathscr{J}_{e}(L)$, where $I^{2} = [I, \alpha(I)]$. If $L$ is prime, then $[I, \alpha (J)] \in \mathscr{J}_{e}(L)$ for any $I, J \in \mathscr{J}_{e}(L)$.
\begin{prop}\label{prop3.13}
Let $(L, [\cdot, \cdot], \alpha)$ be a semiprime Hom-Lie algebra and $(Q, [\cdot, \cdot], \alpha)$ an algebra of quotients of $L$. Then for every essential Hom-ideal $I$ of $L$, we have that $Q$ is an algebra of quotients of $I$.
\end{prop}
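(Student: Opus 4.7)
The plan is to verify the definition of algebra of quotients for $I \subseteq Q$ directly: given $p, q \in Q$ with $p \neq 0$, I must exhibit $x \in I$ with $[x, \alpha(p)] \neq 0$ and $x \in (I : q)$. The guiding idea is to take $x$ as a commutator $[m_{1}, m_{2}]$ with both factors in $M := I \cap (L : q)$; the two outer brackets then absorb any resulting expression into $I$ via the Hom-ideal property $[I, L] \subseteq I$.

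First I collect the relevant properties of $M$. By Proposition \ref{prop:3.8}, $J := (L : q)$ is an essential Hom-ideal of $L$ with $\mathrm{Ann}_{L}(J) = \{0\}$. Since $I$ is also essential in $L$, $M = I \cap J$ is essential in $L$, and by Proposition \ref{prop:2.10} combined with the semiprimeness of $L$, $\mathrm{Ann}_{L}(M) = \{0\}$. The same semiprimeness shows $M^{2} := [M, \alpha(M)]$ is essential in $L$ with $\mathrm{Ann}_{L}(M^{2}) = \{0\}$; Proposition \ref{prop:3.11} then transfers this to $\mathrm{Ann}_{Q}(M^{2}) = \{0\}$.

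Next I verify that any $x = [m_{1}, m_{2}]$ with $m_{1}, m_{2} \in M$ belongs to $(I : q)$, proceeding by induction on the depth of nested brackets in $z \in {}_{I}(q)$, in the spirit of the computation inside the proof of Proposition \ref{prop:3.1}. The essential step is the Hom-Jacobi expansion
\[
[[m_{1}, m_{2}], \alpha(z)] = [m_{1}, [m_{2}, \alpha(z)]] + [m_{2}, [\alpha(z), m_{1}]],
\]
obtained after moving $\alpha$ via (\ref{eq:2.1}). Since $m_{1}, m_{2} \in (L : q)$ and ${}_{I}(q) \subseteq {}_{L}(q)$, each of $[m_{2}, \alpha(z)]$ and $[m_{1}, \alpha(z)]$ lies in $L$; bracketing with $m_{1}, m_{2} \in I$ then pushes each summand into $[I, L] \subseteq I$.

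Finally, $\mathrm{Ann}_{Q}(M^{2}) = \{0\}$ forces $[p, \alpha(M^{2})] \neq \{0\}$, equivalently $[M^{2}, \alpha(p)] \neq \{0\}$ by (\ref{eq:2.1}), so some $x = [m_{1}, \alpha(m_{2})] \in [M, M]$ with $m_{1}, m_{2} \in M$ satisfies $[x, \alpha(p)] \neq 0$; this $x$ meets both requirements. I expect the main obstacle to be recognising that $x$ must be a \emph{second-level} commutator from $M$, not simply an element of $M$: a naive choice $x \in M$ only yields $[x, \alpha(q)] \in L$, not necessarily in $I$, whereas the outer bracket in $[m_{1}, m_{2}]$ with a second factor in $M \subseteq I$ is exactly what triggers the absorption $[I, L] \subseteq I$ and lands the final bracket inside $I$.
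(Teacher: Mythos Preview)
Your proof is correct and follows essentially the same route as the paper: set $M = I \cap (L:q)$, use semiprimeness to get $\mathrm{Ann}_{Q}(M^{2}) = \{0\}$, pick $x = [m_{1}, \alpha(m_{2})]$ with $m_{i} \in M$ so that $[x,\alpha(p)] \neq 0$, and verify $x \in (I:q)$ via the Hom--Jacobi expansion and the absorption $[I,L] \subseteq I$. Two cosmetic points: the induction you mention is not actually needed (your displayed expansion already handles all $z \in {}_{I}(q)$ at once, exactly as the paper does), and your switch between $[m_{1},m_{2}]$ and $[m_{1},\alpha(m_{2})]$ is harmless since $\alpha(m_{2}) \in M$.
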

\begin{proof}
For any $p, q \in Q$ with $p \neq 0$, we have $(L : q) \in \mathscr{J}_{e}(L)$. So that $(L : q) \cap I \in \mathscr{J}_{e}(L)$ and consequently, $((L : q) \cap I)^{2} \in \mathscr{J}_{e}(L)$. Then we have $\rm{Ann}\it{_{Q}(((L : q) \cap I)^{2})} = \{\rm{0}\}$. So there exist $y, z \in (L : q) \cap I$ such that $[\alpha([y, \alpha(z)]), p] \neq 0$, i.e., $[[y, \alpha(z)], \alpha(p)] \neq 0$. Moreover,
\begin{align*}
&[[y,\alpha(z)], \alpha(_{I}(q))] = \alpha([[_{I}(q), z], \alpha(y)] + [[y, _{I}(q)], \alpha(z)]) \subseteq \alpha([[_{L}(q), z], \alpha(y)] + [[y, _{L}(q)], \alpha(z)])\\
&= \alpha([[\alpha(_{L}(q)), z], y] + [[y, \alpha(_{L}(q))], z]) \subseteq \alpha([L, I]) \subseteq \alpha(I) \subseteq I,
\end{align*}
which implies that $[y, \alpha(z)] \in (I : q)$. Therefore, $Q$ is an algebra of quotients of $I$.
\end{proof}
\section{The maximal algebra of quotients of a semiprime Hom-Lie algebra}\label{se:4}
In this section, we will construct a maximal algebra of quotients for every semiprime Hom-Lie algebra $(L, [\cdot, \cdot], \alpha)$ with $\alpha(L)$ is an essential ideal. Maximal in the sense that every algebra of quotients of $L$ can be considered inside this maximal algebra of quotients via an injective map which restricted in $L$ is the identity.
\begin{defn}
Given a Hom-ideal $I$ of a Hom-Lie algebra $(L, [\cdot, \cdot], \alpha)$, we say that a linear map $\delta : I \rightarrow L$ is an $\alpha^{k}$-partial derivation for $k \in \mathbb{N}$ if for any $x, y \in I$ it is satisfied:
\begin{enumerate}[(1)]
\item $\delta \circ \alpha = \alpha \circ \delta$;
\item $\delta([x, y]) = [\delta(x), \alpha^{k}(y)] + [\alpha^{k}(x), \delta(y)]$.
\end{enumerate}
\end{defn}
Denote by $\rm{PDer}_{\it{k}}\it{(I, L)}$ the set of all $\alpha^{k}$-partial derivations of $I$ in $L$.
\begin{lem}\label{le:4.2}
Let $(L, [\cdot, \cdot], \alpha)$ be a semiprime Hom-Lie algebra and consider the set
\[\mathscr{D}_{k} := \{(\delta, I)\;|\;I \in \mathscr{J}_{e}(L), \delta \in \rm{PDer}_{\it{k}}\it{(I, L)}\}.\]
Define on $\mathscr{D}_{k}$ the following relation:

$(\delta, I) \equiv (\mu, J)$ if and only if there exists $K \in \mathscr{J}_{e}(L)$, $K \subseteq I \cap J$ such that
\[\delta|_{K} = \mu|_{K}.\]
Then $\equiv$ is an equivalence relation.
\end{lem}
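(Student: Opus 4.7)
The plan is to verify the three defining properties of an equivalence relation one at a time, exploiting the observation recorded just before Proposition~\ref{prop3.13} that the intersection of two essential Hom-ideals is again an essential Hom-ideal. All of the computations will be set-theoretic; no Hom-Lie identity will have to be invoked.

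For reflexivity, given $(\delta, I) \in \mathscr{D}_{k}$, I would simply take $K := I$. Since $(\delta, I) \in \mathscr{D}_{k}$ forces $I \in \mathscr{J}_{e}(L)$, the containment $K \subseteq I \cap I$ is trivial and $\delta|_{K} = \delta|_{K}$ is automatic, so $(\delta, I) \equiv (\delta, I)$. For symmetry, I would observe that the definition is already symmetric in its data: if $K \in \mathscr{J}_{e}(L)$ is contained in $I \cap J = J \cap I$ and $\delta|_{K} = \mu|_{K}$, then $\mu|_{K} = \delta|_{K}$ witnesses $(\mu, J) \equiv (\delta, I)$ using the same $K$.

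For transitivity, suppose $(\delta, I) \equiv (\mu, J)$ and $(\mu, J) \equiv (\nu, H)$, witnessed by $K_{1} \in \mathscr{J}_{e}(L)$ with $K_{1} \subseteq I \cap J$ and $\delta|_{K_{1}} = \mu|_{K_{1}}$, and by $K_{2} \in \mathscr{J}_{e}(L)$ with $K_{2} \subseteq J \cap H$ and $\mu|_{K_{2}} = \nu|_{K_{2}}$. I would set $K := K_{1} \cap K_{2}$. By the remark recalled above, $K \in \mathscr{J}_{e}(L)$, and the containments $K \subseteq K_{1} \subseteq I$ and $K \subseteq K_{2} \subseteq H$ give $K \subseteq I \cap H$. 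On $K$ we have $\delta = \mu$ by restriction from $K_{1}$ and $\mu = \nu$ by restriction from $K_{2}$, hence $\delta|_{K} = \nu|_{K}$ and $(\delta, I) \equiv (\nu, H)$.

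There is essentially no obstacle here: the only nonformal input is the stability of $\mathscr{J}_{e}(L)$ under finite intersections, which the paper invokes without proof but which is immediate from the definition of essentiality (if $M$ is a nonzero Hom-ideal of $L$, then $M \cap K_{1} \neq \{0\}$ is again a nonzero Hom-ideal, and intersecting with $K_{2}$ gives $(M \cap K_{1}) \cap K_{2} = M \cap (K_{1} \cap K_{2}) \neq \{0\}$). One subtle point worth flagging, but which does not affect the argument, is that the equivalence is between \emph{pairs} $(\delta, I)$, not between the maps themselves; this is why reflexivity needs the witness $K = I$ rather than any statement about $\delta$ alone.
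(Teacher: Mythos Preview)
Your proof is correct and follows essentially the same approach as the paper's own argument: verify reflexivity with $K=I$, symmetry with the same witness $K$, and transitivity by intersecting the two witnesses and using that $\mathscr{J}_{e}(L)$ is closed under finite intersections. Your write-up is in fact slightly cleaner, since you make explicit that the transitivity witness is $K_{1}\cap K_{2}$ and briefly justify why this intersection remains essential.
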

\begin{proof}
(1). For any $(\delta, I) \in \mathscr{D}_{k}$, $\delta|_{I} = \delta|_{I}$, so $(\delta, I) \equiv (\delta, I)$.

(2). For any $(\delta, I), (\mu, J) \in \mathscr{D}_{k}$ and $(\delta, I) \equiv (\mu, J)$, there exists $K \in \mathscr{J}_{e}(L)$, $K \subseteq I \cap J$ such that $\delta|_{K} = \mu|_{K}$. It's obvious that $\mu|_{K} = \delta|_{K}$. Hence $(\mu, J) \equiv (\delta, I)$.

(3). For any $(\delta, I), (\xi, J), (\beta, K) \in \mathscr{D}_{k}$ and $(\delta, I) \equiv (\xi, J)$ and $(\xi, J) \equiv (\beta, K)$, there exist $P, Q \in \mathscr{J}_{e}(L)$, $P \subseteq I \cap J$, $Q \subseteq J \cap K$ such that $\delta|_{P} = \xi|_{P}$ and $\xi|_{Q} = \beta|_{Q}$. Since $P, Q \in \mathscr{J}_{e}(L)$, $P \cap Q \neq \emptyset$. Take $T \subseteq P \cap Q$. We also get $T \in \mathscr{J}_{e}(L)$. Moreover, $\delta|_{T} = \xi|_{T} = \beta|_{T}$. Therefore $(\delta, I) \equiv (\beta, K)$.

The proof is completed.
\end{proof}
\begin{no}\label{no:4.3}
Denote by $Q_{k}(L)$ the quotient set $\mathscr{D}_{k}/\equiv$. Set $Q(L) = Q_{0}(L)$. Let $\delta_{I}$ denote the equivalence class of $(\delta, I)$ in $Q(L)$.
\end{no}
\begin{thm}\label{thm:4.4}
Let $(L, [\cdot, \cdot], \alpha)$ be a semiprime Hom-Lie algebra over $\mathbb{F}$, and let $Q := Q(L)$ be as in Notation \ref{no:4.3}. Define the following maps:
\[\cdot : \mathbb{F} \times Q \rightarrow Q,\;(p, \delta_{I}) \mapsto (p\delta)_{I}\;\rm{where}\; \it{p}\delta : I \rightarrow L,\; y \mapsto \delta(py),\]
\[+ : Q \times Q \rightarrow Q,\;(\delta_{I}, \mu_{J}) \mapsto (\delta + \mu)_{I \cap J}\;\rm{where}\;\delta + \mu : \it{I} \cap J \rightarrow L,\; x \mapsto \delta(x) + \mu(x),\]
\[[\cdot, \cdot] : Q \times Q \rightarrow Q,\; (\delta_{I}, \mu_{J}) \mapsto [\delta, \mu]_{[I \cap J, I \cap J]}\;\rm{where}\;[\delta, \mu] : \it{[I \cap J, I \cap J]} \rightarrow L,\; x \mapsto \delta\mu(x) - \mu\delta(x),\]
\[\tilde{\alpha} : Q \rightarrow Q,\; \delta_{I} \mapsto (\alpha \circ \delta)_{I}\;\rm{where}\; \alpha \circ \delta : \it{I} \rightarrow L,\; x \mapsto \alpha\delta(x).\]
Then $Q$, with these operations, is a Hom-Lie algebra satisfying
$$\tilde{\alpha}([\delta_{I}, \mu_{J}]) = [\tilde{\alpha}(\delta_{I}), \mu_{J}] = [\delta_{I}, \tilde{\alpha}(\mu_{J})]$$
for any $\delta_{I}, \mu_{J} \in Q$ and containing $L$ as a subalgebra, via the injection
\[\varphi : L \rightarrow Q,\quad x \mapsto (\rm{ad}_{\it{x}})_{\it{L}}.\]
\end{thm}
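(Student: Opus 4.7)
The plan is to verify, in order: (i) each of the four operations takes representatives in $\mathscr{D}_{0}$ to partial derivations whose domain is an essential Hom-ideal; (ii) each operation descends to $Q = \mathscr{D}_{0}/{\equiv}$; (iii) the resulting structure satisfies skew-symmetry, the Hom-Jacobi identity, and the compatibility $\tilde{\alpha}([\delta_{I}, \mu_{J}]) = [\tilde{\alpha}(\delta_{I}), \mu_{J}] = [\delta_{I}, \tilde{\alpha}(\mu_{J})]$; (iv) $\varphi$ is a well-defined injection.

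For (i), intersections of essential Hom-ideals are essential, and since $L$ is semiprime, the observation preceding Proposition~\ref{prop3.13} yields $[I \cap J, I \cap J] \in \mathscr{J}_{e}(L)$ whenever $I, J \in \mathscr{J}_{e}(L)$. The delicate point is that $\delta \mu$ is not a priori defined on $I \cap J$, because $\mu(I \cap J)$ need not lie in $I$; but for $u, v \in I \cap J$ the Leibniz rule for $\mu$ gives $\mu([u,v]) = [\mu(u), v] + [u, \mu(v)] \in [L, I] \subseteq I$, so $\delta \mu$ (and similarly $\mu \delta$) is defined on $[I \cap J, I \cap J]$. A routine expansion shows that the cross terms cancel and $[\delta,\mu]$ itself satisfies the Leibniz rule there, while $\alpha \circ [\delta,\mu] = [\delta,\mu] \circ \alpha$ is immediate from $\alpha \delta = \delta \alpha$ and $\alpha \mu = \mu \alpha$. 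The verifications for $p \cdot \delta$, $\delta + \mu$ and $\tilde{\alpha}(\delta)$ are straightforward.

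The main technical obstacle is (ii). Suppose $(\delta, I) \equiv (\delta', I')$ on an essential $K \subseteq I \cap I'$ and $(\mu, J) \equiv (\mu', J')$ on an essential $K' \subseteq J \cap J'$; set $N := K \cap K' \in \mathscr{J}_{e}(L)$. Addition evidently agrees on $N$. For the bracket, both $[\delta,\mu]$ and $[\delta',\mu']$ are defined on $[N, N] \in \mathscr{J}_{e}(L)$, and the key point is that for $u, v \in N$ the element $[\mu(u), v]$ lies in $N$ (since $N$ is a Hom-ideal and $v \in N$), so $\delta$ and $\delta'$ give the same value on it; the analogous statements handle the other three composition terms, and the two brackets agree on $[N, N]$. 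Well-definedness of $\tilde{\alpha}$ and of scalar multiplication is trivial.

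For (iii), skew-symmetry is immediate. The Hom-Jacobi sum expands to twelve compositions of the form $\pm\, \alpha \circ \rho_{1} \circ \rho_{2} \circ \rho_{3}$ with $\rho_{i} \in \{\delta, \mu, \nu\}$; using $\alpha \rho = \rho \alpha$ for every partial derivation, each term rewrites as $\pm\, \rho_{1} \rho_{2} \rho_{3} \circ \alpha$, and the twelve terms cancel in pairs exactly as in the ordinary Jacobi identity for commutators of linear endomorphisms. The compatibility with $\tilde{\alpha}$ is immediate from the same commutation relations. For (iv), $\mathrm{ad}_{x} \in \mathrm{PDer}_{0}(L, L)$ by Remark~\ref{re:2.4} and $L \in \mathscr{J}_{e}(L)$ trivially, so $\varphi(x) \in Q$, and linearity in $x$ is routine. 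For injectivity, semiprimeness forces $\ker \alpha = \{0\}$ (else $\ker \alpha$ would be a nonzero Hom-ideal with $[\ker \alpha, \alpha(\ker \alpha)] = \{0\}$), and if $\varphi(x) = 0$ then $\mathrm{ad}_{x}$ vanishes on some $K \in \mathscr{J}_{e}(L)$, whence $\alpha(x) \in \mathrm{Ann}_{L}(K) = \{0\}$ by Proposition~\ref{prop:2.10}\,(2), so $x = 0$.
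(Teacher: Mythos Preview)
Your proof is correct and follows the same overall strategy as the paper's: verify that the bracket and $\tilde{\alpha}$ land in $\mathrm{PDer}_{0}$, check skew-symmetry and the Hom--Jacobi identity (via commutation of $\alpha$ with partial derivations), derive the $\tilde{\alpha}$-compatibility, and prove injectivity of $\varphi$ using that essential Hom-ideals have zero $\alpha$-annihilator in the semiprime case. Your treatment is in fact more careful than the paper's on two points: you explain why $\delta\mu$ and $\mu\delta$ are only guaranteed to make sense on $[I\cap J, I\cap J]$ (the paper's displayed computation tacitly applies the Leibniz rule for $\delta$ to $[\mu(x),y]$ with $\mu(x)\in L$, which requires exactly your observation that $\mu([I\cap J,I\cap J])\subseteq I\cap J$), and you address independence of representatives, which the paper omits entirely.

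One minor simplification in your injectivity argument: once $\mathrm{ad}_{x}$ vanishes on an essential $K$, the relation $[\alpha(x),k]=[x,\alpha(k)]$ from (\ref{eq:2.1}) gives $x\in\mathrm{Ann}_{L}(K)=\{0\}$ directly, so the detour through $\alpha(x)\in\mathrm{Ann}_{L}(K)$ and $\ker\alpha=\{0\}$ is not needed (though it is correct). Note also that, like the paper, you verify only that $\varphi$ is an injective linear map; neither argument checks that $\varphi$ respects the bracket, which is what ``containing $L$ as a subalgebra'' would strictly require.
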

\begin{proof}
(1). For any $\delta_{I}, \mu_{J} \in Q$ and $x, y \in I \cap J$,
\begin{align*}
&[\delta, \mu]([x, y]) = \delta\mu([x, y]) - \mu\delta([x, y]) = \delta([\mu(x), y] + [x, \mu(y)]) - \mu([\delta(x), y] + [x, \delta(y)])\\
&= [\delta\mu(x), y] + [\mu(x), \delta(y)] + [\delta(x), \mu(y)] + [x, \delta\mu(y)] - [\mu\delta(x), y] - [\delta(x), \mu(y)]\\
&- [\mu(x), \delta(y)] - [x, \mu\delta(y)]\\
&= [[\delta, \mu](x), y] + [x, [\delta, \mu](y)],
\end{align*}
which implies that $[\delta, \mu] \in \rm{PDer}_{0}\it{([I \cap J, I \cap J], L)}$.

For any $\delta_{I} \in Q$ and $x, y \in I$,
\begin{align*}
&\alpha \circ \delta([x, y]) = \alpha([\delta(x), y] + [x, \delta(y)]) = [\alpha \circ \delta(x), y] + [x, \alpha \circ \delta(y)],
\end{align*}
which implies that $\alpha \circ \delta \in \rm{PDer}_{0}\it{(I, L)}$.

Therefore, $[\cdot, \cdot]$, $\tilde{\alpha}$ are well-defined.

(2). It's obvious that $(Q, \cdot, +)$ is a vector space over $\mathbb{F}$ and $[\cdot, \cdot]$ is skew-symmetric. Next we'll show that $Q$ satisfies the Jacobi identity. For any $\delta_{I}, \mu_{J}, \beta_{K} \in Q$, we have
\[[[\delta_{I}, \mu_{J}], \tilde{\alpha}(\beta_{K})] + [[\mu_{J}, \beta_{K}], \tilde{\alpha}(\delta_{I})] + [[\beta_{K}, \delta_{I}], \tilde{\alpha}(\mu_{J})] = 0\]
since $\alpha$ commutes with $\delta, \mu, \beta$. Hence, $Q$ is a Hom-Lie algebra.

(3). For any $\delta_{I}, \mu_{J} \in Q$,
\begin{align*}
&\tilde{\alpha}([\delta_{I}, \mu_{J}]) = \tilde{\alpha}([\delta, \mu]_{[I \cap J, I \cap J]}) = (\alpha \circ [\delta, \mu])_{[I \cap J, I \cap J]} = (\alpha\delta\mu - \alpha\mu\delta)_{[I \cap J, I \cap J]}\\
&= (\alpha\delta\mu - \mu\alpha\delta)_{[I \cap J, I \cap J]} = [\alpha\delta, \mu]_{[I \cap J, I \cap J]} = [(\alpha\delta)_{I}, \mu_{J}] = [\tilde{\alpha}(\delta_{I}), \mu_{J}].
\end{align*}
Similarly, we get $\tilde{\alpha}([\delta_{I}, \mu_{J}]) = [\delta_{I}, \tilde{\alpha}(\mu_{J})]$. Hence, we have $\tilde{\alpha}([\delta_{I}, \mu_{J}]) = [\tilde{\alpha}(\delta_{I}), \mu_{J}] = [\delta_{I}, \tilde{\alpha}(\mu_{J})]$ for any $\delta_{I}, \mu_{J} \in Q$.

(4). According to Remark \ref{re:2.4}, we get for any $x \in L$, $\rm{ad}_{\it{x}} \in \rm{PDer}_{0}\it{(L, L)}$. So $\varphi$ is well-defined. If $\varphi(x) = 0$, we have $(\rm{ad}_{\it{x}})_{\it{L}} = \rm{0}$. Then we have $[x, \alpha(L)] = [\alpha(x), L]= \{0\}$, which implies that $x \in \rm{Ann}\it{(L)}$. Note that $L$ is semiprime, we have $\rm{Ann}\it{(L)} = \{\rm{0}\}$ by Proposition \ref{prop:2.10} (2). Hence $x = 0$, i,e, $\varphi$ is injective.
\end{proof}

For any $X \subseteq L$, write $X^{\varphi}$ to denote the image of $X$ inside $Q(L)$ via the map defined in Theorem \ref{thm:4.4}.
\begin{lem}\label{le:4.5}
For every $\delta_{I} \in Q(L)$, and $(\rm{ad}_{\it{x}})_{\it{L}} \in \it{I}^{\varphi}(x \in I)$ we have $[\delta_{I}, (\rm{ad}_{\it{x}})_{\it{L}}] = (\rm{ad}_{\it{\delta(x)}})_{\it{L}} \in \it{L}^{\varphi}$.
\end{lem}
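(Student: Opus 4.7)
The plan is to unfold the definitions of the bracket on $Q(L)$ and of the embedding $\varphi$, compute $[\delta, \mathrm{ad}_{x}]$ pointwise on its natural domain using the Leibniz rule for $\delta$, and then recognise the resulting partial derivation as $\mathrm{ad}_{\delta(x)}$ up to the equivalence $\equiv$ by producing an essential Hom-ideal of $L$ on which the two representatives agree. Once this identification is established, $(\mathrm{ad}_{\delta(x)})_{L} \in L^{\varphi}$ is immediate from the definition of $\varphi$.

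Working with the formula of Theorem \ref{thm:4.4}, $[\delta_{I}, (\mathrm{ad}_{x})_{L}]$ is the class of the map $[\delta, \mathrm{ad}_{x}] : [I, I] \to L$, $y \mapsto \delta([\alpha(x), y]) - [\alpha(x), \delta(y)]$; note that $\alpha(x) \in \alpha(I) \subseteq I$, so $[\alpha(x), y] \in I$ and $\delta$ may be applied. The Leibniz rule for $\delta \in \mathrm{PDer}_{0}(I, L)$ combined with $\delta \circ \alpha = \alpha \circ \delta$ gives
\[
\delta([\alpha(x), y]) = [\delta(\alpha(x)), y] + [\alpha(x), \delta(y)] = [\alpha(\delta(x)), y] + [\alpha(x), \delta(y)],
\]
and the second summand cancels with $-[\alpha(x), \delta(y)]$, leaving $[\delta, \mathrm{ad}_{x}](y) = [\alpha(\delta(x)), y] = \mathrm{ad}_{\delta(x)}(y)$ for every $y \in [I, I]$.

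To convert this pointwise equality into an equality of classes in $Q(L)$, Lemma \ref{le:4.2} asks for an essential Hom-ideal $K$ of $L$ with $K \subseteq L \cap [I, I] = [I, I]$ on which the two representatives agree. Since $L$ is semiprime and $I \in \mathscr{J}_{e}(L)$, the remark preceding Proposition \ref{prop3.13} gives $I^{2} = [I, \alpha(I)] \in \mathscr{J}_{e}(L)$, and $\alpha(I) \subseteq I$ forces $I^{2} \subseteq [I, I]$. Taking $K = I^{2}$ then yields $[\delta_{I}, (\mathrm{ad}_{x})_{L}] = (\mathrm{ad}_{\delta(x)})_{L}$ in $Q(L)$, with the right-hand side visibly in $L^{\varphi}$.

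I do not expect a genuine obstacle: the central computation is a single application of the Leibniz rule together with the $\alpha$-equivariance of $\delta$. The only mildly delicate point is ensuring that $[I, I]$ contains an essential Hom-ideal of $L$, and this is precisely where the semiprime hypothesis on $L$ (built into the setup of Section \ref{se:4}) plays its role, through the essentialness of $I^{2}$.
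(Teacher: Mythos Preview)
Your proof is correct and follows the same approach as the paper: both compute $[\delta,\mathrm{ad}_x]$ pointwise using the Leibniz rule together with $\delta\circ\alpha=\alpha\circ\delta$ to obtain $\mathrm{ad}_{\delta(x)}$, and then identify the equivalence classes. The only cosmetic difference is that the paper carries out the computation for all $y\in I$ (which contains the formal domain $[I,I]$ of the bracket) and therefore takes $I$ itself as the common essential ideal witnessing $[\delta,\mathrm{ad}_x]_{I}=(\mathrm{ad}_{\delta(x)})_{I}=(\mathrm{ad}_{\delta(x)})_{L}$, whereas you restrict from the outset to $[I,I]$ and then pass to $I^{2}\subseteq[I,I]$; either choice works.
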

\begin{proof}
For any $y \in I$,
\begin{align*}
&[\delta, \rm{ad}_{\it{x}}](\it{y}) = \delta\rm{ad}_{\it{x}}(\it{y}) - \rm{ad}_{\it{x}}\delta(\it{y}) = \delta([\alpha(x), y]) - [\alpha(x), \delta(y)]\\
&= [\alpha(x), \delta(y)] + [\delta\alpha(x), y] - [\alpha(x), \delta(y)] = [\delta\alpha(x), y] = [\alpha\delta(x), y] = \rm{ad}_{\it{\delta(x)}}(\it{y}),
\end{align*}
and so $[\delta, \rm{ad}_{\it{x}}]_{\it{I}} = (\rm{ad}_{\it{\delta(x)}})_{\it{I}} = (\rm{ad}_{\it{\delta(x)}})_{\it{L}} \in \it{L}^{\varphi}$.
\end{proof}
\begin{prop}\label{prop:4.6}
Let $(L, [\cdot, \cdot], \alpha)$ be a semiprime Hom-Lie algebra with $\alpha(L)$ is essential. Then $Q(L)$ is semiprime and an algebra of quotients of $L$. Moreover, $Q(L)$ is maximal among the algebras of quotients of $L$, in the sense that if $S$ is an algebra of quotients of $L$, then there exists an injection $\psi : S \rightarrow Q(L)$ which is the identity in $L$. In particular, the map
\[\psi : S \rightarrow Q(L),\quad s \mapsto (\rm{ad}_{\it{s}})_{\it{(L : s)}}\]
is an injective map which is the identity when restricted to $L$.
\end{prop}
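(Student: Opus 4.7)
The statement bundles three assertions: (i) $Q(L)$ is an algebra of quotients of $L$; (ii) $Q(L)$ is semiprime; (iii) the formula $\psi(s) = (\mathrm{ad}_{s})_{(L:s)}$ defines an injection $S \to Q(L)$ restricting to the identity on $L$. Throughout I identify $L$ with its image $L^{\varphi} \subseteq Q(L)$ via Theorem \ref{thm:4.4}. The one preparatory fact I use repeatedly is that $L$ semiprime forces $\alpha|_{L}$ injective: $\ker\alpha$ is a Hom-ideal of $L$ with $[\ker\alpha, \alpha(\ker\alpha)] = \{0\}$, so $\ker\alpha = \{0\}$. Consequently, whenever $K$ is an essential Hom-ideal of $L$, so is $\alpha(K)$: for any nonzero Hom-ideal $J$, take $0 \neq w \in K \cap J$; then $0 \neq \alpha(w) \in \alpha(K) \cap J$.

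\textbf{Algebra of quotients and semiprimeness.} For (i), I take arbitrary $\delta_{I}, \mu_{J} \in Q(L)$ with $\delta_{I} \neq 0$ and exhibit $x \in L$ witnessing the definition. My plan is to pick $x \in I \cap J$ with $\alpha\delta(x) \neq 0$; such $x$ exists because otherwise $\delta$ vanishes on the essential Hom-ideal $\alpha(I \cap J) \subseteq I$, forcing $\delta_{I} = 0$ via the equivalence relation of Lemma \ref{le:4.2}. For this $x$, Lemma \ref{le:4.5} together with the $\tilde{\alpha}$-multiplicativity from Theorem \ref{thm:4.4} gives
\[ [\varphi(x), \tilde{\alpha}(\delta_{I})] = \tilde{\alpha}([\varphi(x), \delta_{I}]) = -\tilde{\alpha}(\varphi(\delta(x))) = -\varphi(\alpha\delta(x)), \]
which lies in $L^{\varphi}$ and is nonzero by injectivity of $\varphi$. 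The membership $\varphi(x) \in (L^{\varphi} : \mu_{J})$ follows from $x \in J$: Lemma \ref{le:4.5} gives $[\varphi(J), \mu_{J}] \subseteq L^{\varphi}$, so Proposition \ref{prop:3.1} yields $\varphi(J) \subseteq (L^{\varphi} : \mu_{J})$. Hence $Q(L)$ is an algebra of quotients of $L$, and Proposition \ref{prop:3.6}(2) promotes this to (ii) by making $Q(L)$ semiprime.

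\textbf{Maximality and the obstacle.} For (iii), I check $\psi$ in three steps. Well-definedness: $(L:s) \in \mathscr{J}_{e}(L)$ by Proposition \ref{prop:3.8}(1), and $\mathrm{ad}_{s}$ restricted to $(L:s)$ lands in $L$ by the very definition of $(L:s)$, satisfies Leibniz via the Hom-Jacobi identity (as in Remark \ref{re:2.4}), and commutes with $\alpha$ by the centroid condition, giving $\mathrm{ad}_{s} \in \mathrm{PDer}_{0}((L:s), L)$. Identity on $L$: if $s \in L$ then $(L:s) = L$, so $\psi(s) = (\mathrm{ad}_{s})_{L} = \varphi(s)$. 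Injectivity: if $\psi(s) = 0$, then $\mathrm{ad}_{s}$ vanishes on some essential Hom-ideal $K$ of $L$, i.e., $[\alpha(s), K] = \alpha([s,K]) = \{0\}$. Since $S$ is semiprime (Proposition \ref{prop:3.6}(2) applied to $S$ as weak algebra of quotients of $L$), the same $\ker\alpha$ argument shows $\alpha|_{S}$ is injective, so $[s, K] = \{0\}$ and thus $s \in \mathrm{Ann}_{S}(K)$. Proposition \ref{prop:2.10}(2) gives $\mathrm{Ann}_{L}(K) = \{0\}$, and Proposition \ref{prop:3.11} lifts this to $\mathrm{Ann}_{S}(K) = \{0\}$, forcing $s = 0$. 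The main technical obstacle is the existence of $x \in I \cap J$ with $\alpha\delta(x) \neq 0$: without the semiprime-to-injective reduction on $\alpha$, the $\tilde{\alpha}$ in $[\varphi(x), \tilde{\alpha}(\delta_{I})]$ could annihilate whatever element of $L^{\varphi}$ Lemma \ref{le:4.5} produces, breaking the weak-quotients step.
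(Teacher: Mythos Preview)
Your proof is correct and follows essentially the same route as the paper: pick a witness in the common domain, use Lemma~\ref{le:4.5} (with the $\tilde{\alpha}$-compatibility of Theorem~\ref{thm:4.4}) to land the bracket in $L^{\varphi}$ and see it is nonzero, invoke Proposition~\ref{prop:3.6}(2) for semiprimeness, and check $\psi$ via Proposition~\ref{prop:3.8}(1) together with the vanishing of the annihilator of an essential Hom-ideal. Your explicit observation that semiprimeness forces $\alpha$ injective (hence $\alpha(K)$ essential whenever $K$ is) is a clean replacement for the paper's appeal to $\mathrm{Ann}_{L}(\alpha(L))=\{0\}$, and routing the membership $\varphi(x)\in(L^{\varphi}:\mu_{J})$ through Proposition~\ref{prop:3.1} rather than checking ${}_{L^{\varphi}}(\mu_{J})$ directly is a harmless variant.
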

\begin{proof}
Take $\delta_{I}, \mu_{I} \in Q(L)$ with $\delta_{I} \neq 0$(we can consider the same $I$ for $\delta$ and $\mu$ because if $J, K \in \mathscr{J}_{e}(L)$ are such that $\delta : J \rightarrow L$, $\mu : K \rightarrow L$, then $I := J \cap K \in \mathscr{J}_{e}(L)$ and so $\delta_{J} = \delta_{I}$ and $\mu_{K} = \mu_{I}$). Choose $a \in I$ such that $\delta(a) \neq 0$. Then $(\rm{ad}_{\it{a}})_{\it{L}} \in \it{L}^{\varphi}$ satisfies $[\tilde{\alpha}(\delta_{\it{I}}), (\rm{ad}_{\it{a}})_{\it{L}}] = (\rm{ad}_{\it{\alpha\delta(a)}})_{\it{L}} \neq \rm{0}$. Otherwise, $[\delta(a), \alpha^{2}(L)] = [\alpha^{2}\delta(a), L] = \{0\}$, which implies that $\delta(a) \in \rm{Ann}\it{_{L}(\alpha(L))} = \{\rm{0}\}$. And for every $\lambda_{I} \in _{L}^{\varphi}(\mu_{I})$, $[\tilde{\alpha}(\lambda_{I}), (\rm{ad}_{\it{a}})_{\it{L}}] = (\rm{ad}_{\it{\alpha\lambda(a)}})_{\it{L}} \in \it{L}^{\varphi}$. This implies that $(\rm{ad}_{\it{a}})_{\it{L}} \in \it{(L^{\varphi} : \mu_{I})}$. Hence, $Q(L)$ is an algebra of quotients of $L^{\varphi}$. The semiprimeness of $Q(L)$ follows by Proposition \ref{prop:3.6} (2).

Now suppose that $S$ is an algebra of quotients of $L$ and consider the map
\[\psi : S \rightarrow Q(L),\quad s \mapsto (\rm{ad}_{\it{s}})_{\it{(L : s)}}.\]
According to Proposition \ref{prop:3.8} (1), $\psi$ is well-defined. To prove the injectivity, suppose that $s \in S$ such that $\psi(s) = 0$, that is $[\alpha(s), K] = \{0\}$ for some Hom-ideal $K \in \mathscr{J}_{e}(L)$, $K \subseteq (L : s)$. This implies that $s \in \rm{Ann}\it{_{L}(K)}$. Note that $K$ is essential, $\rm{Ann}\it{_{L}(K)} = \{\rm{0}\}$, so $s = 0$.
\end{proof}
\begin{defn}
For a semiprime Hom-Lie algebra $(L, [\cdot, \cdot], \alpha)$ with $\alpha(L)$ is essential, the Hom-Lie algebra constructed in Theorem \ref{thm:4.4} will be called the maximal algebra of quotients of $L$. Denote it by $Q_{m}(L)$.
\end{defn}

The axiomatic characterization of the Martindale ring of quotients given by D. Passman in \cite{P1} has inspired us to give the following description of the maximal algebra of quotients of a semiprime Hom-Lie algebra.
\begin{prop}\label{prop:4.8}
Let $(L, [\cdot, \cdot], \alpha)$ be a semiprime Hom-Lie algebra with $\alpha(L)$ is essential and consider an overalgebra $S$ of $L$. Then there exists an injection between $S$ and $Q_{m}(L)$ which is the identity on $L$, if and only if $S$ satisfies the following properties:
\begin{enumerate}[(1)]
\item For any $s \in S$, there exists $I \in \mathscr{J}_{e}(L)$ such that $[\alpha(I), s] \subseteq L$.
\item For $s \in S$ and $I \in \mathscr{J}_{e}(L)$, $[\alpha(I), s] = \{0\}$ implies that $s = 0$.
\end{enumerate}
\end{prop}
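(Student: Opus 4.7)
The plan is to exhibit the required injection as $\psi(s) = (\operatorname{ad}_s)_{\alpha(I_s)}$, where $I_s \in \mathscr{J}_e(L)$ is a witness of (1), and to read this construction both ways.

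For $(\Rightarrow)$, suppose $\psi : S \hookrightarrow Q_m(L)$ exists and restricts to the identity on $L$. Write $\psi(s) = \delta_J$ with $J \in \mathscr{J}_e(L)$ and $\delta \in \operatorname{PDer}_0(J, L)$. For every $x \in J$ we have $\alpha(x) \in J$, and Lemma~\ref{le:4.5} gives
\[
[\psi(\alpha(x)), \psi(s)] = [\varphi(\alpha(x)), \delta_J] = -\varphi(\delta(\alpha(x))) \in \varphi(L);
\]
injectivity of $\psi$ (together with $\psi|_L = \varphi|_L$) then forces $[\alpha(x), s] \in L$, giving (1) with $I = J$. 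For (2), suppose $[\alpha(I), s] = \{0\}$ for some $I \in \mathscr{J}_e(L)$. Applying $\psi$ yields $[\varphi(\alpha(I)), \psi(s)] = \{0\}$ in $Q_m(L)$, and the identity $[\tilde{\alpha}(y), z] = [y, \tilde{\alpha}(z)]$ from Theorem~\ref{thm:4.4} converts this to $[\tilde{\alpha}(\psi(s)), I^{\varphi}] = \{0\}$. Since $I^{\varphi}$ is essential in the semiprime algebra $L^{\varphi}$, Proposition~\ref{prop:2.10}(2) gives $\operatorname{Ann}_{L^{\varphi}}(I^{\varphi}) = \{0\}$, and as $Q_m(L)$ is an algebra of quotients of $L^{\varphi}$ by Proposition~\ref{prop:4.6}, Lemma~\ref{le:3.9} forces $\psi(s) = 0$; hence $s = 0$ by injectivity.

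For $(\Leftarrow)$, assume (1) and (2). For $s \in S$ pick $I_s \in \mathscr{J}_e(L)$ with $[\alpha(I_s), s] \subseteq L$ and set $\psi(s) := (\operatorname{ad}_s)_{\alpha(I_s)}$. A preparatory claim is that $\alpha(I) \in \mathscr{J}_e(L)$ whenever $I \in \mathscr{J}_e(L)$: identity~(\ref{eq:2.1}) makes $\alpha(I)$ a Hom-ideal, and for any nonzero Hom-ideal $K$ of $L$, semiprimeness produces $u, v \in K \cap I$ with $[u, \alpha(v)] \neq 0$, so $[u, \alpha(v)] = \alpha([u, v])$ lies in $K \cap \alpha(I)$. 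Using~(\ref{eq:2.1}) again, $\operatorname{ad}_s$ sends $\alpha(I_s)$ into $L$, and the Hom-Jacobi identity inside $S$ yields the partial-derivation property; choice-independence is routine by intersection. Linearity and $\tilde{\alpha}$-equivariance are direct, while bracket preservation is checked on the essential sub-ideal $\alpha(I_s \cap I_t \cap I_{[s,t]})$. For $x \in L$ one may take $I_x = L$, and essentiality of $\alpha(L)$ yields $(\operatorname{ad}_x)_{\alpha(L)} = (\operatorname{ad}_x)_L = \varphi(x)$, so $\psi|_L$ is the identity. For injectivity, if $\psi(s) = 0$ then $\operatorname{ad}_s$ vanishes on some $K \in \mathscr{J}_e(L)$ contained in $\alpha(I_s)$, i.e.\ $[\alpha(s), K] = \{0\}$; by~(\ref{eq:2.1}) this is $[\alpha(K), s] = \{0\}$, and condition (2) forces $s = 0$.

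The main obstacles I anticipate are the preservation of essentiality under $\alpha$ (on which the whole construction rests) and the bracket-preservation step: in the Hom-Lie setting $\operatorname{ad}_{[s, t]}$ need not equal $[\operatorname{ad}_s, \operatorname{ad}_t]$ literally, only up to an $\alpha$-twist that must be absorbed into the equivalence defining $Q_m(L)$. Translating condition (2) into the shape $[\tilde{\alpha}(\psi(s)), I^{\varphi}] = \{0\}$ required by Lemma~\ref{le:3.9}, via the $\tilde{\alpha}$-commutation in $Q_m(L)$, is the other delicate transfer.
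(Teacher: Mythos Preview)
For the $(\Leftarrow)$ direction your construction agrees with the paper's, but the paper takes the simpler domain $I$ rather than $\alpha(I)$: since $(\ref{eq:2.1})$ gives $\operatorname{ad}_s(x)=[\alpha(s),x]=-[\alpha(x),s]\in L$ for every $x\in I$, the map $\operatorname{ad}_s$ already sends $I$ into $L$, and your detour through essentiality of $\alpha(I)$ is unnecessary. More importantly, the paper makes no attempt to verify bracket preservation --- the statement only asks for an \emph{injection} which restricts to the identity on $L$, not a Hom-Lie monomorphism. Your instinct about the $\alpha$-twist is in fact correct (one computes $[\operatorname{ad}_s,\operatorname{ad}_t]=\alpha\circ\operatorname{ad}_{[s,t]}$, not $\operatorname{ad}_{[s,t]}$), and this is exactly why the paper does not claim $\psi$ is a homomorphism; that verification should simply be dropped.

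The genuine gap is in your $(\Rightarrow)$ direction. Both arguments silently assume the given injection satisfies $\psi([\alpha(x),s])=[\psi(\alpha(x)),\psi(s)]$: for (1) you compute $[\varphi(\alpha(x)),\delta_J]\in\varphi(L)$ and then invoke ``injectivity of $\psi$'' to pull this back to $[\alpha(x),s]\in L$; for (2) you ``apply $\psi$'' to $[\alpha(I),s]=\{0\}$ to obtain $[\varphi(\alpha(I)),\psi(s)]=\{0\}$. Neither step is licensed by an arbitrary injection --- and, as just noted, even the natural map $s\mapsto(\operatorname{ad}_s)_I$ fails to preserve brackets. The paper sidesteps this entirely: rather than arguing inside $S$, it verifies that $Q_m(L)$ \emph{itself} satisfies (1) and (2) --- for (1) take $I=(L:q)$ and use Propositions~\ref{prop:4.6} and~\ref{prop:3.8}(1); for (2) use Propositions~\ref{prop:4.6} and~\ref{prop:3.11} to conclude $q\in\operatorname{Ann}_{Q(L)}(I)=\{0\}$. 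The conditions for $S$ then follow by identifying $S$ with its image in $Q_m(L)$.
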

\begin{proof}
Define $\psi : S \rightarrow Q_{m}(L)$, $s \mapsto (\rm{ad}_{\it{s}})_{\it{I}}$ where $I$ is a nonzero essential Hom-ideal of $L$ satisfying $[\alpha(I), s] \subseteq L$; this exists by (1), and so the map is well-defined.

If $\psi(s) = 0$, i.e., $(\rm{ad}_{\it{s}})_{\it{I}} = \rm{0}$, then $[\alpha(s), I] = \{0\}$. By (2), we get $s = 0$. This implies that $\psi$ is an injective map.

Finally, $\psi$ is the identity on $L$, by identifying $L$ with $L^{\varphi}$, where $\varphi$ is the map defined in Theorem \ref{thm:4.4}.

Conversely, we'll prove that $Q(L)$ satisfies the two conditions.

(1). Consider $q \in Q(L)$. According to Propositions \ref{prop:4.6} and \ref{prop:3.8} (1), $(L : q) \in \mathscr{J}_{e}(L)$, and by the definition, $[\alpha((L : q)), q] = [(L : q), \alpha(q)] \subseteq L$.

(2). Take $q \in Q(L)$ and $I \in \mathscr{J}_{e}(L)$ such that $[\alpha(I), q] = \{0\}$. We have $q \in \rm{Ann}\it{_{Q(L)}(I)} = \rm{0}$ according to Propositions \ref{prop:4.6} and \ref{prop:3.11}.
\end{proof}
\section{Algebras of quotients of associative algebras generated by inner derivations of Hom-Lie algebras}\label{se:5}
In \cite{PS}, authors examined how the notion of algebras of quotients for Lie algebras tied up with the corresponding well-known concept in the associative case. Inspired by the method in \cite{PS}, we mainly study the relationship between Hom-Lie algebras and the associative algebras generated by inner derivations of the corresponding Hom-Lie algebras of quotients. First of all, we will give some definitions and basic notations.

We shall denote by $A(L)$ the associative subalgebra (possibly without identity) of $\rm{End}\it{(L)}$ generated by the elements $\rm{ad}\it{_{x}}$ for $x$ in $(L, [\cdot, \cdot], \alpha)$.

By an extension of Hom-Lie algebras $L \subseteq Q$ we will mean that $(L, [\cdot, \cdot], \alpha)$ is a Hom-subalgebra of the Hom-Lie algebra $(Q, [\cdot, \cdot], \alpha)$.

Let $L \subseteq Q$ be an extension of Hom-Lie algebras and let $A_{Q}(L)$ be the associative subalgebra of $A(Q)$ generated by $\{\rm{ad}\it{_{x}} : x \in L\}$.

Recall that, given an associative algebra $A$ and a subset $X$ of $A$, we define the right annihilator of $X$ in $A$ as
\[rann_{A}(X) = \{a \in A | Xa = 0\},\]
which is always a right ideal of $A$ (and two-sided if $X$ is a right ideal). One similarly defines the left annihilator, which shall be denoted by $lann_{A}(X)$.
\begin{lem}\label{le:5.1}
Let $I$ be a Hom-ideal of a Hom-Lie algebra $(L, [\cdot, \cdot], \alpha)$ with $\rm{Ann}\it{(L)} = \{\rm{0}\}$. Then $\rm{Ann}_{\it{L}}(\it{I}) = \{\rm{0}\}$ if and only if $rann_{A(L)}(A_{L}(I)) = \{0\}$.
\end{lem}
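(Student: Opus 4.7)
The plan is to prove both implications by relating the action of $\mathrm{ad}$-maps as operators on $L$ to annihilation inside $L$ itself, using the embedding $L \hookrightarrow A(L)$, $z \mapsto \mathrm{ad}_{z}$ (which is injective exactly because $\mathrm{Ann}(L) = \{0\}$, by the same argument used in Theorem \ref{thm:4.4}(4)). For the forward implication, I would pick an arbitrary $a \in \mathrm{rann}_{A(L)}(A_{L}(I))$. Since $A_{L}(I)$ is generated as an associative algebra by $\{\mathrm{ad}_{x} : x \in I\}$, it suffices to exploit the equations $\mathrm{ad}_{x}\cdot a = 0$ for $x \in I$. Evaluating at an arbitrary $y \in L$ gives $[\alpha(x), a(y)] = 0$ for all $x \in I$, and skew-symmetry then places $a(y) \in \mathrm{Ann}_{L}(I) = \{0\}$. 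Hence $a(y) = 0$ for every $y \in L$, i.e.\ $a = 0$, as desired.

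For the converse, assume $\mathrm{rann}_{A(L)}(A_{L}(I)) = \{0\}$ and take $z \in \mathrm{Ann}_{L}(I)$. The strategy is to show that $\mathrm{ad}_{z}$ lies in $\mathrm{rann}_{A(L)}(A_{L}(I))$; by hypothesis this gives $\mathrm{ad}_{z} = 0$, and then the assumption $\mathrm{Ann}(L) = \{0\}$ forces $z = 0$. To verify the annihilation, I would first record two easy consequences of (\ref{eq:2.1}): namely, $[\alpha(z), x] = [z, \alpha(x)] = 0$ for every $x \in I$, and $\mathrm{Ann}_{L}(I)$ is $\alpha$-stable (as observed right after its definition in Section 2), so $\alpha^{k}(z) \in \mathrm{Ann}_{L}(I)$ for all $k \geq 0$. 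The key computation is then, for $x \in I$ and $y \in L$,
\[
(\mathrm{ad}_{x}\mathrm{ad}_{z})(y) = [\alpha(x), [\alpha(z), y]];
\]
applying the Hom-Jacobi identity to the triple $(x, \alpha(z), y)$ rewrites this as $-[\alpha^{2}(z), [y, x]] - [\alpha(y), [x, \alpha(z)]]$. The second bracket vanishes by the first consequence, and the first vanishes because $[y, x] \in I$ together with $\alpha^{2}(z) \in \mathrm{Ann}_{L}(I)$. So $\mathrm{ad}_{x}\mathrm{ad}_{z} = 0$ for every generator $\mathrm{ad}_{x}$ of $A_{L}(I)$, which propagates to $A_{L}(I)\cdot \mathrm{ad}_{z} = 0$. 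Thus $\mathrm{ad}_{z} = 0$, meaning $[\alpha(z), L] = \{0\}$, which by (\ref{eq:2.1}) is $[z, \alpha(L)] = \{0\}$, placing $z \in \mathrm{Ann}(L) = \{0\}$.

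The only genuinely delicate point is the book-keeping of $\alpha$-twists: the Hom-Jacobi identity produces an $\alpha^{2}(z)$-term rather than the more obvious $\alpha(z)$-term, so one must know \emph{in advance} that $\mathrm{Ann}_{L}(I)$ is $\alpha$-stable in order to discard it. Everything else is routine: skew-symmetry, the compatibility (\ref{eq:2.1}), and the fact that a right annihilator of a set equals the right annihilator of the associative subalgebra it generates. The hypothesis $\mathrm{Ann}(L) = \{0\}$ is used only twice, and in essentially the same way: once implicitly to guarantee that the map $z \mapsto \mathrm{ad}_{z}$ loses no information, and once explicitly at the very last step of the backward direction to convert $\mathrm{ad}_{z} = 0$ into $z = 0$.
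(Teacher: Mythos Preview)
Your proof is correct and follows essentially the same route as the paper's: for the forward direction evaluate $\mathrm{ad}_y\mu$ on $L$ to land in $\mathrm{Ann}_L(I)$, and for the converse show $\mathrm{ad}_z\in\mathrm{rann}_{A(L)}(A_L(I))$ via the Hom--Jacobi identity and then invoke $\mathrm{Ann}(L)=\{0\}$. The only cosmetic quibble is in discarding $[\alpha^2(z),[y,x]]$: membership of $\alpha^2(z)$ in $\mathrm{Ann}_L(I)$ kills $\alpha(I)$ rather than $I$ directly, so one more application of (\ref{eq:2.1}) is needed (the paper sidesteps this by pulling an $\alpha$ outside before applying Hom--Jacobi), but this is immediate.
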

\begin{proof}
Suppose that $\rm{Ann}_{\it{L}}(\it{I}) = \{\rm{0}\}$. For any $\mu$ in $rann_{A(L)}(A_{L}(I))$, we have $\rm{ad}\it{_{y}}\mu = \rm{0}$ for any $y \in I$. In particular if $x \in L$, we get $0 = \rm{ad}\it{_{y}}\mu(x) = [\alpha(y), \mu(x)]$, and this implies that $\mu(x) \in \rm{Ann}_{\it{L}}(\it{I}) = \rm{0}$. Hence, $\mu = 0$.

Conversely, suppose that $rann_{A(L)}(A_{L}(I)) = \{0\}$. For any $x$ in $\rm{Ann}_{\it{L}}(\it{I})$, $y$ in $I$ and $z$ in $L$, we have
\[\rm{ad}\it{_{y}}\rm{ad}\it{_{x}}(z) = [\alpha(y), [\alpha(x), z]] = \alpha([\alpha(y), [x, z]]) = \alpha([[z, y], \alpha(x)] + [[y, x], \alpha(z)]) = \rm{0},\]
which implies that $\rm{ad}\it{_{x}} \in rann_{A(L)}(A_{L}(I)) = \rm{0}$. Since by assumption $\rm{Ann}\it{(L)} = \{\rm{0}\}$, we obtain $x = 0$.
\end{proof}
For a subset $X$ of an associative algebra $A$, denote by $\langle X \rangle_{A}^{l}$, $\langle X \rangle_{A}^{r}$ and $\langle X \rangle_{A}$ the left, right and two sided ideal of $A$, respectively, generated by $X$.
\begin{lem}\label{le:5.2}
Suppose that $(L, [\cdot, \cdot], \alpha)$ is a Hom-subalgebra of $(Q, [\cdot, \cdot], \alpha)$ and $I$ a Hom-ideal of $(L, [\cdot, \cdot], \alpha)$. Then
\[\langle A_{Q}(I) \rangle_{A_{Q}(L)}^{l} = \langle A_{Q}(I) \rangle_{A_{Q}(L)}^{r} = \langle A_{Q}(I) \rangle_{A_{Q}(L)}.\]
\end{lem}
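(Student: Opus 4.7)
The plan is to reduce the equality of the three ideals to a commutator identity between inner derivations. Since $\langle A_{Q}(I)\rangle_{A_{Q}(L)}^{l}, \langle A_{Q}(I)\rangle_{A_{Q}(L)}^{r} \subseteq \langle A_{Q}(I)\rangle_{A_{Q}(L)}$ hold trivially, it is enough to verify that the left ideal $\langle A_{Q}(I)\rangle_{A_{Q}(L)}^{l} = A_{Q}(I) + A_{Q}(L)\cdot A_{Q}(I)$ is stable under right multiplication by $A_{Q}(L)$, together with the symmetric statement for $\langle A_{Q}(I)\rangle_{A_{Q}(L)}^{r}$. Once both are two-sided ideals of $A_{Q}(L)$ containing $A_{Q}(I)$, they must both equal the two-sided ideal $\langle A_{Q}(I)\rangle_{A_{Q}(L)}$.

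The key algebraic input I would establish first is the identity
\[
[\text{ad}_{x}, \text{ad}_{y}] \;=\; \text{ad}_{\alpha([x,y])}, \qquad \forall\, x,y \in Q.
\]
This is obtained by specializing Hom-Jacobi to the triple $(x, \alpha(y), z)$ and rewriting via (\ref{eq:2.1}), which gives $\text{ad}_{x}\text{ad}_{y}(z) = \alpha(\text{ad}_{y}([x,z])) + \text{ad}_{\alpha([x,y])}(z)$. Swapping $x \leftrightarrow y$ and subtracting, one applies Hom-Jacobi a second time together with (\ref{eq:2.1}) to evaluate $\alpha(\text{ad}_{y}([x,z]) - \text{ad}_{x}([y,z])) = -\text{ad}_{\alpha([x,y])}(z)$, and the claimed commutator formula emerges after cancellation.

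With this identity available, the Hom-ideal axioms $\alpha(I) \subseteq I$ and $[I,L] \subseteq I$ force $\alpha([x,y]) \in I$ for all $x \in I$ and $y \in L$, so
\[
\text{ad}_{x}\cdot\text{ad}_{y} \;=\; \text{ad}_{y}\cdot\text{ad}_{x} + \text{ad}_{\alpha([x,y])} \;\in\; A_{Q}(L)\,A_{Q}(I) + A_{Q}(I).
\]
An induction on the lengths of monomials $\text{ad}_{x_{1}}\cdots\text{ad}_{x_{n}} \in A_{Q}(I)$ and $\text{ad}_{y_{1}}\cdots\text{ad}_{y_{m}} \in A_{Q}(L)$, each swap of an adjacent pair $\text{ad}_{x_{i}}\text{ad}_{y_{j}}$ introducing only an error term $\text{ad}_{\alpha([x_{i},y_{j}])} \in A_{Q}(I)$, then yields $A_{Q}(I)\cdot A_{Q}(L) \subseteq A_{Q}(L)\,A_{Q}(I) + A_{Q}(I)$, and the mirrored computation gives $A_{Q}(L)\cdot A_{Q}(I) \subseteq A_{Q}(I)\,A_{Q}(L) + A_{Q}(I)$.

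These two inclusions say precisely that $A_{Q}(I) + A_{Q}(L)\,A_{Q}(I)$ and $A_{Q}(I) + A_{Q}(I)\,A_{Q}(L)$ are each two-sided ideals of $A_{Q}(L)$ containing $A_{Q}(I)$, so both coincide with $\langle A_{Q}(I)\rangle_{A_{Q}(L)}$, completing the proof. I expect the main obstacle to lie in the commutator identity itself: in the Hom-Lie setting the Hom-Jacobi axiom forces stray copies of $\alpha$ that do not cancel as cleanly as in the classical Lie case, and one has to invoke (\ref{eq:2.1}) several times in order to reconcile terms with $\alpha^{2}$ and to identify $\alpha \circ \text{ad}_{[x,y]}$ with $\text{ad}_{\alpha([x,y])}$.
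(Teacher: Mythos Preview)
Your proof is correct and follows essentially the same route as the paper: both arguments hinge on the commutator identity $\mathrm{ad}_{x}\,\mathrm{ad}_{y}=\mathrm{ad}_{\alpha([x,y])}+\mathrm{ad}_{y}\,\mathrm{ad}_{x}$ (with $\alpha([x,y])\in I$ when $x\in L$, $y\in I$) to conclude that $A_{Q}(L)A_{Q}(I)+A_{Q}(I)=A_{Q}(I)A_{Q}(L)+A_{Q}(I)$, hence the left, right, and two-sided ideals coincide. The paper simply states this identity and the resulting equality without spelling out the induction or the derivation from Hom--Jacobi and (\ref{eq:2.1}), so your write-up is just a more detailed version of the same proof.
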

\begin{proof}
Obviously, we have $\langle A_{Q}(I) \rangle_{A_{Q}(L)}^{l} = A_{Q}(L)A_{Q}(I) + A_{Q}(I)$ and $\langle A_{Q}(I) \rangle_{A_{Q}(L)}^{r} = A_{Q}(I)A_{Q}(L) + A_{Q}(I)$. Notice that given $x \in L$ and $y \in I$, we have
\[\rm{ad}\it{_{x}}\rm{ad}\it{_{y}} = \rm{ad}\it{_{\alpha([x, y])}} + \rm{ad}\it{_{y}}\rm{ad}\it{_{x}}.\]
Then we have $\langle A_{Q}(I) \rangle_{A_{Q}(L)}^{l} = \langle A_{Q}(I) \rangle_{A_{Q}(L)}^{r}$. Hence we come to the conclusion.
\end{proof}
\begin{lem}\label{le:5.3}
Suppose that $(L, [\cdot, \cdot], \alpha)$ is a Hom-subalgebra of $(Q, [\cdot, \cdot], \alpha)$ and $I$ a Hom-ideal of $(L, [\cdot, \cdot], \alpha)$. Write $\tilde{I}$ to denote the ideal of $A_{Q}(L)$ generated by $A_{Q}(I)$. Then
\begin{enumerate}[(1)]
\item $rann_{A_{Q}(L)}(\tilde{I}) = rann_{A_{Q}(L)}(A_{Q}(I))$.
\item $lann_{A_{Q}(L)}(\tilde{I}) = lann_{A_{Q}(L)}(A_{Q}(I))$.
\end{enumerate}
\end{lem}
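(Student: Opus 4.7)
The plan is to observe that one direction of each equality is automatic and then use Lemma \ref{le:5.2} to handle the harder direction. Since $A_{Q}(I) \subseteq \tilde{I}$, any element that right-annihilates $\tilde{I}$ also right-annihilates $A_{Q}(I)$, giving the inclusion $rann_{A_{Q}(L)}(\tilde{I}) \subseteq rann_{A_{Q}(L)}(A_{Q}(I))$; the analogous inclusion for the left annihilator is equally trivial. The whole content of the lemma is therefore in the reverse inclusions.

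For (1), I would invoke the description of $\tilde{I}$ as a left ideal provided by Lemma \ref{le:5.2}, namely
\[\tilde{I} = \langle A_{Q}(I) \rangle_{A_{Q}(L)}^{l} = A_{Q}(L)A_{Q}(I) + A_{Q}(I).\]
Take any $\mu \in rann_{A_{Q}(L)}(A_{Q}(I))$ and a typical element $ab + c \in \tilde{I}$ with $a \in A_{Q}(L)$ and $b, c \in A_{Q}(I)$. Then $(ab + c)\mu = a(b\mu) + c\mu = 0$ since both $b\mu = 0$ and $c\mu = 0$ by the choice of $\mu$. Hence $\tilde{I}\mu = 0$, so $\mu \in rann_{A_{Q}(L)}(\tilde{I})$.

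For (2), I would run the symmetric argument using the right-ideal description
\[\tilde{I} = \langle A_{Q}(I) \rangle_{A_{Q}(L)}^{r} = A_{Q}(I)A_{Q}(L) + A_{Q}(I)\]
given by the same lemma: for $\mu \in lann_{A_{Q}(L)}(A_{Q}(I))$ and $ba + c \in \tilde{I}$ with $b, c \in A_{Q}(I)$ and $a \in A_{Q}(L)$, one has $\mu(ba + c) = (\mu b)a + \mu c = 0$, so $\mu \in lann_{A_{Q}(L)}(\tilde{I})$.

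There is no real obstacle here: the commutator identity $\operatorname{ad}_{x}\operatorname{ad}_{y} = \operatorname{ad}_{\alpha([x,y])} + \operatorname{ad}_{y}\operatorname{ad}_{x}$ together with the fact that $I$ is a Hom-ideal of $L$ (so $\alpha([x, y]) \in I$ for $x \in L$, $y \in I$) already did the work inside Lemma \ref{le:5.2}, where the two one-sided ideals were identified with the two-sided ideal $\tilde{I}$. Once that identification is in hand, both annihilator equalities follow from a line of bookkeeping, as above.
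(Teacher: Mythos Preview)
Your proof is correct and follows essentially the same approach as the paper: both use Lemma~\ref{le:5.2} to write $\tilde{I}$ as $A_{Q}(L)A_{Q}(I) + A_{Q}(I)$ (respectively $A_{Q}(I)A_{Q}(L) + A_{Q}(I)$) and then observe that any element right- (respectively left-) annihilating $A_{Q}(I)$ kills every such sum. The paper spells out the generators of $\tilde{I}$ explicitly as monomials in $\mathrm{ad}$'s ending in factors from $I$, while you work with the abstract description, but the argument is the same.
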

\begin{proof}
(1) Since $A_{Q}(I) \subseteq \tilde{I}$, we have $rann_{A_{Q}(L)}(\tilde{I}) \subseteq rann_{A_{Q}(L)}(A_{Q}(I))$. So it's enough to show that $rann_{A_{Q}(L)}(A_{Q}(I)) \subseteq rann_{A_{Q}(L)}(\tilde{I})$. Let $\lambda \in rann_{A_{Q}(L)}(A_{Q}(I))$. By Lemma \ref{le:5.2} we know that, if $\mu \in \tilde{I}$ there exist a natural number $n$, elements $x_{1, i}, \cdots, x_{r_{i}, i} \in L$ and $y_{1, i}, \cdots, y_{s_{i}, i} \in I$ with $0 \leq r_{i} \in \mathbb{N}$ for all $i$ and $\emptyset \neq \{s_{1}, \cdots, s_{n}\} \subseteq \mathbb{N}$, such that
\[\mu = \sum_{i}^{n}\rm{ad}\it{_{x_{1, i}}} \cdots \rm{ad}\it{_{x_{r_{i}, i}}}\rm{ad}\it{_{y_{1, i}}} \cdots \rm{ad}\it{_{y_{s_{i}, i}}}.\]
Since $\rm{ad}\it{_{y_{s_{i}, i}}}\lambda = \rm{0}$, we see that $\mu\lambda = 0$.

(2) The proof is similar to (1).
\end{proof}
\begin{lem}\label{le:5.4}
Suppose that $(L, [\cdot, \cdot], \alpha)$ is a Hom-subalgebra of $(Q, [\cdot, \cdot], \alpha)$ such that $Q$ is a weak algebra of quotients of $L$. Let $I$ be a Hom-ideal of $(L, [\cdot, \cdot], \alpha)$. If $\rm{Ann}_{\it{L}}(\it{I}) = \{\rm{0}\}$, then $rann_{A(Q)}(A_{Q}(I)) = \{0\}$.
\end{lem}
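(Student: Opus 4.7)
The plan is to reduce the right-annihilator statement in the associative algebra $A(Q)$ to the fact that $I$ has no $\alpha$-annihilator in $Q$. The key auxiliary tool is Proposition \ref{prop:3.11}: since $Q$ is a weak algebra of quotients of $L$ and $\mathrm{Ann}_{L}(I)=\{0\}$, I immediately obtain $\mathrm{Ann}_{Q}(I)=\{0\}$. This is the bridge from the Lie-theoretic hypothesis to something I can use against $A(Q)$, which acts on $Q$.

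First I would pick an arbitrary $\mu\in\mathrm{rann}_{A(Q)}(A_{Q}(I))$ and unpack what this means on generators: since every $\mathrm{ad}_{y}$ with $y\in I$ lies in $A_{Q}(I)$, the annihilator condition forces $\mathrm{ad}_{y}\mu=0$ in $\mathrm{End}(Q)$ for every $y\in I$. Evaluating on an arbitrary $q\in Q$ gives $[\alpha(y),\mu(q)]=0$ for all $y\in I$, and skew-symmetry converts this to $[\mu(q),\alpha(y)]=0$ for all $y\in I$.

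The second step is to read the last identity as $\mu(q)\in\mathrm{Ann}_{Q}(I)$. Combining with the preliminary reduction via Proposition \ref{prop:3.11}, $\mathrm{Ann}_{Q}(I)=\{0\}$, so $\mu(q)=0$ for every $q\in Q$, i.e.\ $\mu=0$ in $\mathrm{End}(Q)$, and hence in $A(Q)$. This yields $\mathrm{rann}_{A(Q)}(A_{Q}(I))=\{0\}$.

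There is no real obstacle here; the argument is a two-line unpacking once Proposition \ref{prop:3.11} is invoked. The only points requiring a little care are (i) verifying that the weak algebra of quotients hypothesis is what legitimises the passage from $\mathrm{Ann}_{L}(I)=\{0\}$ to $\mathrm{Ann}_{Q}(I)=\{0\}$ (so the hypothesis on $L\subseteq Q$ cannot be dropped), and (ii) being precise that elements of $A(Q)$ are to be tested as endomorphisms of $Q$, so that $\mathrm{ad}_{y}\mu=0$ really does mean $[\alpha(y),\mu(q)]=0$ for every $q\in Q$ rather than merely for $q\in L$. Note that the analogous result for $A(L)$ in Lemma \ref{le:5.1} is a model for the structure of this argument, and the proof here is in fact simpler because the harder ``converse'' direction used there is not needed.
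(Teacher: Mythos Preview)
Your proof is correct and follows essentially the same approach as the paper: invoke Proposition~\ref{prop:3.11} to upgrade $\mathrm{Ann}_{L}(I)=\{0\}$ to $\mathrm{Ann}_{Q}(I)=\{0\}$, then test $\mathrm{ad}_{y}\mu=0$ on arbitrary $q\in Q$ to force $\mu(q)\in\mathrm{Ann}_{Q}(I)$. The paper's argument is the same two-line unpacking you describe, with identical use of the hypotheses.
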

\begin{proof}
According to Proposition \ref{prop:3.11}, we have $\rm{Ann}_{\it{Q}}(\it{I}) = \{\rm{0}\}$. For any $\mu \in rann_{A(Q)}(A_{Q}(I))$, we have $\rm{ad}\it{_{y}}\mu = \rm{0}$ for every $y \in I$. If $q \in Q$, we then have $0 = \rm{ad}\it{_{y}}\mu(q) = [\alpha(y), \mu(q)]$. This implies that $\mu(q) \in \rm{Ann}_{\it{Q}}(\it{I}) = \rm{0}$, and so $\mu = 0$.
\end{proof}
\begin{lem}\label{le:5.5}
Suppose that $(L, [\cdot, \cdot], \alpha)$ is a Hom-subalgebra of $(Q, [\cdot, \cdot], \alpha)$ and let $x_{1}, \cdots, x_{n}$, $y$ in $L$. Then we have, in $A(Q)$:
\[\rm{ad}\it{_{x_{1}}}\cdots\rm{ad}\it{_{x_{n}}}\rm{ad}\it{_{y}} = \rm{ad}\it{_{y}}\rm{ad}\it{_{x_{1}}}\cdots\rm{ad}\it{_{x_{n}}} + \sum_{i = 1}^{n}\rm{ad}\it{_{x_{1}}} \cdots\rm{ad}\it{_{\alpha([x_{i}, y])}}\cdots\rm{ad}\it{_{x_{n}}}.\]
In particular, if $I$ is a Hom-ideal of $L$ and $x_{1}, \cdots, x_{n} \in I$, then
\[\rm{ad}\it{_{x_{1}}}\cdots\rm{ad}\it{_{x_{n}}}\rm{ad}\it{_{y}} = \rm{ad}\it{_{y}}\rm{ad}\it{_{x_{1}}}\cdots\rm{ad}\it{_{x_{n}}} + \delta\]
where $\delta \in span\{\rm{ad}\it{_{z_{1}}}\cdots\rm{ad}\it{_{z_{n}}}\;|\;z_{i} \in I\}$.
\end{lem}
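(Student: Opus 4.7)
The plan is to prove the displayed identity by induction on $n$, and then read off the particular case as an immediate corollary using the fact that a Hom-ideal is closed under $\alpha$ and under bracketing with $L$.

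The base case $n=1$ amounts to the commutation relation already recorded in the proof of Lemma \ref{le:5.2}, namely
\[\rm{ad}\it{_{x}}\rm{ad}\it{_{y}} = \rm{ad}\it{_{y}}\rm{ad}\it{_{x}} + \rm{ad}\it{_{\alpha([x, y])}}.\]
I would verify this by evaluating both sides on an arbitrary $z \in Q$: expanding $\rm{ad}\it{_{x}}\rm{ad}\it{_{y}}(z) - \rm{ad}\it{_{y}}\rm{ad}\it{_{x}}(z) = [\alpha(x), [\alpha(y), z]] - [\alpha(y), [\alpha(x), z]]$ and rewriting via the Hom-Jacobi identity together with (\ref{eq:2.1}) produces exactly $[\alpha([x,y]), z] = [\alpha^2([x,y]), z]/\text{(after applying }\alpha\text{)}$; the bookkeeping uses the multiplicativity condition $\alpha([u,v]) = [\alpha(u), v] = [u, \alpha(v)]$ twice.

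For the inductive step, assume the identity for $n-1$ factors. Starting from $\rm{ad}\it{_{x_{1}}}\cdots\rm{ad}\it{_{x_{n}}}\rm{ad}\it{_{y}}$, I would apply the base case to the last two factors to obtain
\[\rm{ad}\it{_{x_{1}}}\cdots\rm{ad}\it{_{x_{n-1}}}\rm{ad}\it{_{y}}\rm{ad}\it{_{x_{n}}} + \rm{ad}\it{_{x_{1}}}\cdots\rm{ad}\it{_{x_{n-1}}}\rm{ad}\it{_{\alpha([x_{n}, y])}}.\]
The first summand is handled by invoking the inductive hypothesis on $\rm{ad}\it{_{x_{1}}}\cdots\rm{ad}\it{_{x_{n-1}}}\rm{ad}\it{_{y}}$ and then right-multiplying by $\rm{ad}\it{_{x_{n}}}$; this yields $\rm{ad}\it{_{y}}\rm{ad}\it{_{x_{1}}}\cdots\rm{ad}\it{_{x_{n}}}$ together with the sum over $i = 1, \ldots, n-1$ of the displayed form. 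The leftover term $\rm{ad}\it{_{x_{1}}}\cdots\rm{ad}\it{_{x_{n-1}}}\rm{ad}\it{_{\alpha([x_{n}, y])}}$ is precisely the missing $i = n$ summand, and the two assemble into the claimed expression.

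For the particular case, if $I$ is a Hom-ideal of $L$ and each $x_{i} \in I$, then $[x_{i}, y] \in I$ and hence $\alpha([x_{i}, y]) \in I$ because $\alpha(I) \subseteq I$. Consequently every summand in the error term has all of its $\rm{ad}$-indices in $I$, so the entire sum lies in $\rm{span}\{\rm{ad}\it{_{z_{1}}}\cdots\rm{ad}\it{_{z_{n}}}\;|\;z_{i} \in I\}$, which gives the desired $\delta$. I do not anticipate any real obstacle: the argument is a straightforward telescoping induction on top of the derivation identity for $\rm{ad}$, and the only care needed is in keeping the indices aligned when splitting the product in the inductive step.
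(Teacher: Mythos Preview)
Your proposal is correct and follows essentially the same route as the paper: induction on $n$, with the base case being the commutator identity $\mathrm{ad}_{x}\mathrm{ad}_{y} = \mathrm{ad}_{y}\mathrm{ad}_{x} + \mathrm{ad}_{\alpha([x,y])}$, and the inductive step obtained by swapping $\mathrm{ad}_{x_{n}}\mathrm{ad}_{y}$ via the base case and then invoking the hypothesis on the length-$(n-1)$ product before right-multiplying by $\mathrm{ad}_{x_{n}}$. The only cosmetic differences are that you spell out the verification of the base case and the ``in particular'' clause about Hom-ideals, whereas the paper simply states the former and leaves the latter implicit.
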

\begin{proof}
We'll prove it by induction on $n$.

When $n = 1$, we get
\[\rm{ad}\it{_{x}}\rm{ad}\it{_{y}} = \rm{ad}\it{_{\alpha([x, y])}} + \rm{ad}\it{_{y}}\rm{ad}\it{_{x}}\]
for any $x, y \in L$.

Suppose that $\rm{ad}\it{_{x_{1}}}\cdots\rm{ad}\it{_{x_{k}}}\rm{ad}\it{_{y}} = \rm{ad}\it{_{y}}\rm{ad}\it{_{x_{1}}}\cdots\rm{ad}\it{_{x_{k}}} + \sum_{i = 1}^{k}\rm{ad}\it{_{x_{1}}} \cdots\rm{ad}\it{_{\alpha([x_{i}, y])}}\cdots\rm{ad}\it{_{x_{k}}}$ for any $x_{1}, \cdots, x_{k}, y \in L$. Then when $n = k + 1$, for any $x_{1}, \cdots, x_{k}, x_{k + 1}, y \in L$, we get
\begin{align*}
&\rm{ad}\it{_{x_{1}}}\cdots\rm{ad}\it{_{x_{k}}}\rm{ad}\it{_{x_{k + 1}}}\rm{ad}\it{_{y}} = \rm{ad}\it{_{x_{1}}}\cdots\rm{ad}\it{_{x_{k}}}(\rm{ad}\it{_{y}}\rm{ad}\it{_{x_{k + 1}}} + \rm{ad}\it{_{\alpha([x_{k + 1}, y])}})\\
&= (\rm{ad}\it{_{x_{1}}}\cdots\rm{ad}\it{_{x_{k}}}\rm{ad}\it{_{y}})\rm{ad}\it{_{x_{k + 1}}} + \rm{ad}\it{_{x_{1}}}\cdots\rm{ad}\it{_{x_{k}}}\rm{ad}\it{_{\alpha([x_{k + 1}, y])}}\\
&= (\rm{ad}\it{_{y}}\rm{ad}\it{_{x_{1}}}\cdots\rm{ad}\it{_{x_{k}}} + \sum_{i = 1}^{k}\rm{ad}\it{_{x_{1}}} \cdots\rm{ad}\it{_{\alpha([x_{i}, y])}}\cdots\rm{ad}\it{_{x_{k}}})\rm{ad}\it{_{x_{k + 1}}} + \rm{ad}\it{_{x_{1}}}\cdots\rm{ad}\it{_{x_{k}}}\rm{ad}\it{_{\alpha([x_{k + 1}, y])}}\\
&= \rm{ad}\it{_{y}}\rm{ad}\it{_{x_{1}}}\cdots\rm{ad}\it{_{x_{k}}}\rm{ad}\it{_{x_{k + 1}}} + \sum_{i = 1}^{k + 1}\rm{ad}\it{_{x_{1}}} \cdots\rm{ad}\it{_{\alpha([x_{i}, y])}}\cdots\rm{ad}\it{_{x_{k + 1}}}.
\end{align*}
The proof is completed.
\end{proof}
Let $(L, [\cdot, \cdot], \alpha)$ be a Hom-subalgebra of $(Q, [\cdot, \cdot], \alpha)$. Denote by $A_{0}$ the associative subalgebra of $A(Q)$ whose elements are those $\mu$ in $A(Q)$ such that $\mu(L) \subseteq L$. We obviously have the containments:
\[A_{Q}(L) \subseteq A_{0} \subseteq A(Q).\]
\begin{lem}\label{le:5.6}
Suppose that $(L, [\cdot, \cdot], \alpha)$ is a Hom-subalgebra of $(Q, [\cdot, \cdot], \alpha)$ and $I$ a Hom-ideal of $(L, [\cdot, \cdot], \alpha)$. Let $q_{1}, \cdots, q_{n}$ in $Q$ such that $[\alpha(q_{i}), I] \subseteq L$ for every $i = 1, \cdots, n$. Then for $\mu = \rm{ad}\it{_{q_{1}}} \cdots \rm{ad}\it{_{q_{n}}}$ in $A(Q)$, we have that $\mu(\tilde{I})^{n} + (\tilde{I})^{n}\mu \subseteq A_{0}$ (where $(\tilde{I})^{n}$ denotes the $n$-th power of $\tilde{I}$ in the associative algebra $A_{Q}(L)$).
\end{lem}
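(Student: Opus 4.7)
The plan is to induct on $n$, repeatedly applying Lemma~\ref{le:5.5} to commute $\rm{ad}\it{_{q_i}}$ past factors of $(\tilde{I})^{n}$ and to ``absorb'' each $q$-factor into an $L$-factor using the hypothesis $[\alpha(q_i), I] \subseteq L$.

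\textit{Base case $n=1$}: Write $\mu = \rm{ad}\it{_{q_1}}$. For $\mu\tilde{I} \subseteq A_{0}$, every $\eta \in \tilde{I}$ satisfies $\eta(L) \subseteq I$: a generator $\eta = \rm{ad}\it{_{z_1}} \cdots \rm{ad}\it{_{z_m}}$ of $\tilde{I}$ has some $z_k \in I$, and iteratively using $[\alpha(L), L] \subseteq L$, $[\alpha(I), L] \subseteq I$ and $[\alpha(I), I] \subseteq I$ yields $\eta(L) \subseteq I$; hence $\mu\eta(L) \subseteq [\alpha(q_1), I] \subseteq L$. For $\tilde{I}\mu \subseteq A_{0}$ I argue by secondary induction on the length $m$ of $\eta$: Lemma~\ref{le:5.5} gives
\[\eta\,\rm{ad}\it{_{q_1}} = \rm{ad}\it{_{q_1}}\,\eta + \sum_{j=1}^{m} \rm{ad}\it{_{z_1}} \cdots \rm{ad}\it{_{\alpha([z_j, q_1])}} \cdots \rm{ad}\it{_{z_m}}.\]
The first summand is in $A_0$ by the previous step. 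Each correction with $z_j \in I$ lies in $A_{Q}(L) \subseteq A_{0}$ because $\alpha([z_j, q_1]) \in L$ by hypothesis. For $z_j \in L \setminus I$, the correction equals $\eta'\,\rm{ad}\it{_{q'}}$ with $\eta'$ of length $m-1$ still in $\tilde{I}$ and $q' := \alpha([z_j, q_1])$; a short Hom-Jacobi calculation verifies that $[\alpha(q'), I] \subseteq L$, so the secondary induction hypothesis applies.

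\textit{Inductive step}: By repeated use of Lemma~\ref{le:5.5} to sort $I$-factors leftward (each $L$-with-$I$ commutation preserves the $I$-count in every resulting summand), $(\tilde{I})^{n}$ is spanned by normal-form products $\nu = \rm{ad}\it{_{y_1}} \cdots \rm{ad}\it{_{y_s}}\,\rm{ad}\it{_{x_1}} \cdots \rm{ad}\it{_{x_r}}$ with $y_j \in I$, $x_j \in L$, and $s \geq n$. For such $\nu$, apply Lemma~\ref{le:5.5} to $\rm{ad}\it{_{q_n}}\,\rm{ad}\it{_{y_1}}$, splitting $\mu\nu$ into a ``swap'' term (with $\rm{ad}\it{_{q_n}}$ moved one step rightward) plus an ``absorb'' term $\mu'\rho$, where $\mu' = \rm{ad}\it{_{q_1}} \cdots \rm{ad}\it{_{q_{n-1}}}$ and $\rho = \rm{ad}\it{_{\alpha([q_n, y_1])}}\,\rm{ad}\it{_{y_2}} \cdots \rm{ad}\it{_{y_s}}\,\rm{ad}\it{_{x_1}} \cdots \rm{ad}\it{_{x_r}}$. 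Since $\alpha([q_n, y_1]) \in L$ and $\rho$ retains $s-1 \geq n-1$ factors in $I$, we have $\rho \in (\tilde{I})^{n-1}$, so $\mu'\rho \in A_{0}$ by the inductive hypothesis. Iterating this pairing of each $\rm{ad}\it{_{q_i}}$ with a distinct $\rm{ad}\it{_{y_{j_i}}}$ (possible because $s \geq n$), every branch ultimately lies in $A_{Q}(L) \subseteq A_{0}$ or reduces to a smaller instance handled by the inductive hypothesis. The inclusion $(\tilde{I})^{n}\mu \subseteq A_{0}$ follows by a symmetric argument, commuting $\rm{ad}\it{_{q_1}}$ rightward into $\nu$.

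The main obstacle is the bookkeeping required to track the branching commutation tree and to verify that every residual ``swap'' term, after iterated absorption, reduces cleanly to the inductive hypothesis; the algebraic fact that the condition $[\alpha(q), I] \subseteq L$ is preserved under $q \mapsto \alpha([x, q])$ for $x \in L$---obtained from a short Hom-Jacobi identity calculation---is the key step that enables the secondary induction in the base case to close.
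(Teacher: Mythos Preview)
Your overall strategy---induction on $n$ using Lemma~\ref{le:5.5}---matches the paper's, and your Hom--Jacobi verification that $[\alpha(q'),I]\subseteq L$ for $q'=\alpha([z,q_1])$ with $z\in L$ is correct. However, there is a genuine gap in your base case for $\tilde{I}\mu\subseteq A_0$: the assertion that, for $z_j\in L\setminus I$, the correction term $\mathrm{ad}_{z_1}\cdots\mathrm{ad}_{\alpha([z_j,q_1])}\cdots\mathrm{ad}_{z_m}$ ``equals $\eta'\,\mathrm{ad}_{q'}$ with $\eta'$ of length $m-1$ still in $\tilde{I}$'' is simply false unless $j=m$. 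For $j<m$ the factor $\mathrm{ad}_{q'}$ sits in the middle of the word, so your secondary induction hypothesis (which concerns products of the form $\eta'\,\mathrm{ad}_{q'}$ with $\eta'\in\tilde{I}$) does not apply. You could repair this with a further case split on whether some $I$-factor lies to the right of position $j$ (then the right tail lies in $\tilde{I}$ and the already-established inclusion $\mathrm{ad}_{q'}\tilde{I}\subseteq A_0$ handles it) or all $I$-factors lie to the left of position $j$ (then the left segment has length $j-1<m$, is in $\tilde{I}$, and your secondary induction applies after peeling off the $A_Q(L)$-tail on the right); but none of this appears in your write-up.

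The paper avoids this complication entirely by first invoking Lemma~\ref{le:5.2} to write
\[(\tilde{I})^n = A_Q(L)(A_Q(I))^n + (A_Q(I))^n = (A_Q(I))^n A_Q(L) + (A_Q(I))^n,\]
which, since $A_Q(L)\subseteq A_0$, reduces the problem to checking $\mu y,\,y\mu\in A_0$ for $y=\mathrm{ad}_{x_1}\cdots\mathrm{ad}_{x_n}$ with \emph{every} $x_i\in I$. Under this reduction the case $z_j\in L\setminus I$ never arises, the base case becomes a two-line computation, and all Lemma~\ref{le:5.5} corrections automatically land in $A_Q(L)$. You effectively rediscover this normal form in your inductive step (sorting $I$-factors leftward to get $s\ge n$), but you should have deployed it already in the base case; doing so makes your secondary induction on $m$ and the preservation-of-hypothesis detour unnecessary.
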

\begin{proof}
According to Lemma \ref{le:5.2}, we have
\[(\tilde{I})^{n} = A_{Q}(L)(A_{Q}(I))^{n} + (A_{Q}(I))^{n} = (A_{Q}(I))^{n}A_{Q}(L) + (A_{Q}(I))^{n}.\]
Thus it's enough to prove that, for any $y = \rm{ad}\it{_{x_{1}}} \cdots \rm{ad}\it{_{x_{n}}}$ where $x_{i} \in I$, both $\mu y$ and $y \mu$ belong to $A_{0}$.

We'll use induction on $n$. For $n = 1$, we have $\rm{ad}\it{_{x}}\rm{ad}\it{_{q}} = \rm{ad}\it{_{\alpha([x, q])}} + \rm{ad}\it{_{q}}\rm{ad}\it{_{x}}$ and since $\alpha([x, q]) = [x, \alpha(q)] \in L$ we see that $\rm{ad}\it{_{\alpha([x, q])}} \in A_{\rm{0}}$. On the other hand, $\rm{ad}\it{_{q}}\rm{ad}\it{_{x}}(L) = [\alpha(q), [\alpha(x), L]] \subseteq [\alpha(q), I] \subseteq L$, and so $\rm{ad}\it{_{q}}\rm{ad}\it{_{x}} \in A_{\rm{0}}$.

Assume that the result is true for $n - 1$. Now, by Lemma \ref{le:5.5} we have
\begin{align*}\label{eq:*}
&\rm{ad}\it{_{x_{1}}}\cdots\rm{ad}\it{_{x_{n}}}\rm{ad}\it{_{q_{1}}}\cdots\rm{ad}\it{_{q_{n}}} = (\rm{ad}\it{_{q_{1}}}\rm{ad}\it{_{x_{1}}})\rm{ad}\it{_{x_{2}}}\cdots\rm{ad}\it{_{x_{n}}}\rm{ad}\it{_{q_{2}}}\cdots\rm{ad}\it{_{q_{n}}}\\
&+ \sum_{i = 1}^{n}\rm{ad}\it{_{x_{1}}} \cdots\rm{ad}\it{_{\alpha([x_{i}, q_{1}])}}\cdots\rm{ad}\it{_{x_{n}}}\rm{ad}\it{_{q_{2}}}\cdots\rm{ad}\it{_{q_{n}}}.\tag{*}
\end{align*}
The first summand on the right side belongs to $A_{0}$ because, as proved before, $\rm{ad}\it{_{q_{1}}}\rm{ad}\it{_{x_{1}}} \in A_{\rm{0}}$ and $\rm{ad}\it{_{x_{2}}}\cdots\rm{ad}\it{_{x_{n}}}\rm{ad}\it{_{q_{2}}}\cdots\rm{ad}\it{_{q_{n}}} \in A_{\rm{0}}$ by induction hypothesis. On the other hand, for each of the terms $\rm{ad}\it{_{x_{1}}} \cdots\rm{ad}\it{_{\alpha([x_{i}, q_{1}])}}\cdots\rm{ad}\it{_{x_{n}}}\rm{ad}\it{_{q_{2}}}\cdots\rm{ad}\it{_{q_{n}}}$ we have that $x_{i} \in I$ and $\alpha([x_{i}, q_{1}]) = [x_{i}, \alpha(q_{1})] \in L$. Using Lemma \ref{le:5.5}, we may write this as:
\[\rm{ad}\it{_{\alpha([x_{i}, q_{1}])}}\rm{ad}\it{_{x_{1}}} \cdots\rm{ad}\it{_{x_{i - 1}}}\rm{ad}\it{_{x_{i + 1}}}\cdots\rm{ad}\it{_{x_{n}}}\rm{ad}\it{_{q_{2}}}\cdots\rm{ad}\it{_{q_{n}}} + \delta \cdot \rm{ad}\it{_{x_{i + 1}}}\cdots\rm{ad}\it{_{x_{n}}}\rm{ad}\it{_{q_{2}}}\cdots\rm{ad}\it{_{q_{n}}},\]
where $\delta \in span\{\rm{ad}\it{_{z_{1}}}\cdots\rm{ad}\it{_{z_{i - 1}}}\;|\;z_{j} \in I\}$. The induction hypothesis applies again to show that this belongs to $A_{0}$. Hence, $y\mu \in A_{0}$.

If we continue to develop in the expression (\ref{eq:*}), we get, for some $\delta_{0} \in A_{0}$,
\[\rm{ad}\it{_{q_{1}}}\rm{ad}\it{_{q_{2}}}\rm{ad}\it{_{x_{1}}}\rm{ad}\it{_{x_{2}}}\cdots\rm{ad}\it{_{x_{n}}}\rm{ad}\it{_{q_{3}}}\cdots\rm{ad}\it{_{q_{n}}} + \sum_{i = 1}^{n}\rm{ad}\it{_{q_{1}}}\rm{ad}\it{_{x_{1}}} \cdots\rm{ad}\it{_{\alpha([x_{i}, q_{2}])}}\cdots\rm{ad}\it{_{x_{n}}}\rm{ad}\it{_{q_{3}}}\cdots\rm{ad}\it{_{q_{n}}} + \delta_{\rm{0}}.\]
Using Lemma \ref{le:5.5} we can write each term of the form
\[\rm{ad}\it{_{q_{1}}}\rm{ad}\it{_{x_{1}}} \cdots\rm{ad}\it{_{\alpha([x_{i}, q_{2}])}}\cdots\rm{ad}\it{_{x_{n}}}\rm{ad}\it{_{q_{3}}}\cdots\rm{ad}\it{_{q_{n}}}\]
as:
\[\rm{ad}\it{_{q_{1}}}\rm{ad}\it{_{\alpha([x_{i}, q_{2}])}}\rm{ad}\it{_{x_{1}}} \cdots\it{_{x_{i - 1}}}\rm{ad}\it{_{x_{i + 1}}}\cdots\rm{ad}\it{_{x_{n}}}\rm{ad}\it{_{q_{3}}}\cdots\rm{ad}\it{_{q_{n}}} + \rm{ad}\it{_{q_{1}}}\delta \cdot \rm{ad}\it{_{x_{i + 1}}}\cdots\rm{ad}\it{_{x_{n}}}\rm{ad}\it{_{q_{3}}}\cdots\rm{ad}\it{_{q_{n}}},\]
where $\delta \in span\{\rm{ad}\it{_{z_{1}}}\cdots\rm{ad}\it{_{z_{i - 1}}}\;|\;z_{j} \in I\}$. Notice that
\[\rm{ad}\it{_{q_{1}}}\rm{ad}\it{_{\alpha([x_{i}, q_{2}])}}\rm{ad}\it{_{x_{1}}} = \rm{ad}\it{_{q_{1}}}\rm{ad}\it{_{\alpha([\alpha([x_{i}, q_{2}]), x_{1}])}} + \rm{ad}\it{_{q_{1}}}\rm{ad}\it{_{x_{1}}}\rm{ad}\it{_{\alpha([x_{i}, q_{2}])}}.\]
Hence, using $\alpha([x_{i}, q_{2}]) \in L$ and $x_{i} \in I$, we see that the first summand above belongs to $A_{0}$. For the second summand, assuming that $\delta = \rm{ad}\it{_{z_{1}}}\cdots\rm{ad}\it{_{z_{i - 1}}}$ with $z_{j} \in I$, we have $(\rm{ad}\it{_{q_{1}}}\rm{ad}\it{_{z_{1}}})\rm{ad}\it{_{z_{2}}}\cdots\rm{ad}\it{_{z_{i - 1}}}\rm{ad}\it{_{x_{i + 1}}}\cdots\rm{ad}\it{_{x_{n}}}\rm{ad}\it{_{q_{3}}}\cdots\rm{ad}\it{_{q_{n}}}$, which is also an element of $A_{0}$. Continuing in this way, we find that
\[\rm{ad}\it{_{x_{1}}}\cdots\rm{ad}\it{_{x_{n}}}\rm{ad}\it{_{q_{1}}}\cdots\rm{ad}\it{_{q_{n}}} - \rm{ad}\it{_{q_{1}}}\cdots\rm{ad}\it{_{q_{n}}}\rm{ad}\it{_{x_{1}}}\cdots\rm{ad}\it{_{x_{n}}} \in A_{\rm{0}}\]
and by what we have just proved, we see that $\rm{ad}\it{_{q_{1}}}\cdots\rm{ad}\it{_{q_{n}}}\rm{ad}\it{_{x_{1}}}\cdots\rm{ad}\it{_{x_{n}}} \in A_{\rm{0}}$, as was to be shown.
\end{proof}
\begin{cor}\label{cor:5.7}
Suppose that $(L, [\cdot, \cdot], \alpha)$ is a Hom-subalgebra of $(Q, [\cdot, \cdot], \alpha)$ and $I$ a Hom-ideal of $(L, [\cdot, \cdot], \alpha)$. Let $q_{1}, \cdots, q_{n}$ in $Q$ such that $[\alpha(q_{i}), I] \subseteq L$ for every $i = 1, \cdots, n$. Then for $\mu = \rm{ad}\it{_{q_{1}}} \cdots \rm{ad}\it{_{q_{n}}}$ in $A(Q)$, we have that $\mu\tilde{I^{n}} \in A_{0}, \tilde{I^{n}}\mu \subseteq A_{0}$ (where $I^{1} = I$ and $I^{k} = [I^{k - 1}, \alpha(I)]$ for $k \geq 2$).
\end{cor}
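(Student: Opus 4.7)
The plan is to reduce Corollary~\ref{cor:5.7} to Lemma~\ref{le:5.6} by establishing the associative-algebra containment $\widetilde{I^n}\subseteq(\tilde I)^n$ inside $A_Q(L)$. Once this is proved, Lemma~\ref{le:5.6} immediately gives
\[\mu\,\widetilde{I^n}+\widetilde{I^n}\,\mu\ \subseteq\ \mu\,(\tilde I)^n+(\tilde I)^n\,\mu\ \subseteq\ A_0,\]
which is exactly the statement (reading the ``$\in$'' of the corollary as the evident ``$\subseteq$''). Thus the whole proof boils down to establishing $\widetilde{I^n}\subseteq(\tilde I)^n$.

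I would do this by induction on $n$. The base case $n=1$ is trivial since $I^1=I$ and $\widetilde{I^1}=\tilde I$. For the inductive step the crucial identity, already used inside the proofs of Lemmas~\ref{le:5.2} and~\ref{le:5.5}, is
\[\mathrm{ad}_x\,\mathrm{ad}_y-\mathrm{ad}_y\,\mathrm{ad}_x=\mathrm{ad}_{\alpha([x,y])}\qquad(x,y\in L),\]
which follows from the Hom-Jacobi identity combined with the multiplicativity axiom~(\ref{eq:2.1}). Since (\ref{eq:2.1}) also gives $[u,\alpha(v)]=\alpha([u,v])$, we obtain the useful rewriting $\mathrm{ad}_{[u,\alpha(v)]}=\mathrm{ad}_{\alpha([u,v])}=[\mathrm{ad}_u,\mathrm{ad}_v]$.

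For $n\geq 2$, assuming $\widetilde{I^{n-1}}\subseteq(\tilde I)^{n-1}$, I would pick a generator $w=[u,\alpha(v)]$ of $I^n=[I^{n-1},\alpha(I)]$ with $u\in I^{n-1}$ and $v\in I$, and apply the identity just mentioned to write $\mathrm{ad}_w=\mathrm{ad}_u\mathrm{ad}_v-\mathrm{ad}_v\mathrm{ad}_u$. By induction $\mathrm{ad}_u\in A_Q(I^{n-1})\subseteq\widetilde{I^{n-1}}\subseteq(\tilde I)^{n-1}$, and clearly $\mathrm{ad}_v\in A_Q(I)\subseteq\tilde I$, so both products lie in $(\tilde I)^n$. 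Therefore $A_Q(I^n)\subseteq(\tilde I)^n$; since $(\tilde I)^n$ is a two-sided ideal of $A_Q(L)$, the ideal $\widetilde{I^n}$ generated by $A_Q(I^n)$ is forced inside $(\tilde I)^n$, closing the induction.

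The only real subtlety is the $\alpha$-twist: in the Hom setting one does \emph{not} have $\mathrm{ad}_{[x,y]}=[\mathrm{ad}_x,\mathrm{ad}_y]$ in general, only the shifted version $\mathrm{ad}_{\alpha([x,y])}=[\mathrm{ad}_x,\mathrm{ad}_y]$. The recursive definition $I^k=[I^{k-1},\alpha(I)]$ has an $\alpha$ sitting exactly where it is needed so that this twist can be absorbed via~(\ref{eq:2.1}); this compatibility is what makes the induction go through and the reduction to Lemma~\ref{le:5.6} clean.
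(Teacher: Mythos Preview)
Your proposal is correct and follows essentially the same approach as the paper: both reduce to Lemma~\ref{le:5.6} by proving the inclusion $\widetilde{I^n}\subseteq(\tilde I)^n$ via induction on $n$, using the commutator identity $\mathrm{ad}_{\alpha([x,y])}=[\mathrm{ad}_x,\mathrm{ad}_y]$ in the inductive step. Your write-up is in fact slightly more explicit than the paper's in noting that $(\tilde I)^n$ is a two-sided ideal of $A_Q(L)$, which is what justifies passing from $A_Q(I^n)\subseteq(\tilde I)^n$ to $\widetilde{I^n}\subseteq(\tilde I)^n$.
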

\begin{proof}
It's straightforward to show that $I^{n}$ is a Hom-ideal of $L$ for each $n \geq 1$.

Claim that $\widetilde{I^{n}} \subseteq (\tilde{I})^{n}$. We'll show it by induction on $n$. When $n = 1$, it's obvious. Suppose that $\widetilde{I^{k}} \subseteq (\tilde{I})^{k}$. When $n = k + 1$, for any $y \in I^{k}, z \in I$,
\[\rm{ad}\it{_{[y, \alpha(z)]}} = \rm{ad}\it{_{\alpha([y, z])}} = \rm{ad}\it{_{y}}\rm{ad}\it{_{z}} - \rm{ad}\it{_{z}}\rm{ad}\it{_{y}} \in \tilde{I^{k}}\tilde{I} \subseteq (\tilde{I})^{k}\tilde{I} = (\tilde{I})^{k + 1}.\]
According to Lemma \ref{le:5.6}, we come to the conclusion.
\end{proof}
\begin{lem}\label{le:5.8}
Let $(L, [\cdot, \cdot], \alpha)$ be a semiprime Hom-Lie algebra. If $I$ is a Hom-ideal of $(L, [\cdot, \cdot], \alpha)$ with $\rm{Ann}_{\it{L}}(\it{I}) = \{\rm{0}\}$, then $\rm{Ann}_{\it{L}}(\it{I^{s}}) = \{\rm{0}\}$for any $s \geq 1$. Any finite intersection of ideals with zero $\alpha$-annihilator will also have zero $\alpha$-annihilator.
\end{lem}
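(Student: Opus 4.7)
The plan is to translate both assertions into essentiality statements in $\mathscr{J}_e(L)$ via Proposition \ref{prop:2.10}(2), which in a semiprime Hom-Lie algebra identifies the Hom-ideals with vanishing $\alpha$-annihilator as precisely the essential Hom-ideals. Both halves of the lemma then reduce to closure properties of $\mathscr{J}_e(L)$ that the paper has essentially already established.

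The finite intersection claim is immediate: if $\mathrm{Ann}_L(I_k) = \{0\}$ for each $k$, then each $I_k \in \mathscr{J}_e(L)$ by Proposition \ref{prop:2.10}(2), and the remark recorded right after the notation $\mathscr{J}_e(L)$ is introduced states that $\mathscr{J}_e(L)$ is closed under (finite) intersection. Applying Proposition \ref{prop:2.10}(2) in the reverse direction to $\bigcap_k I_k$ yields the required vanishing.

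For the first assertion I would induct on $s$, showing that $I^s$ is essential (and hence has zero $\alpha$-annihilator by Proposition \ref{prop:2.10}(2)). The base $s=1$ is exactly the hypothesis. For the inductive step, $I^s$ is a Hom-ideal by Corollary \ref{cor:5.7} and is essential by the inductive hypothesis, so the same remark after the notation (namely that $J^2 \in \mathscr{J}_e(L)$ whenever $J \in \mathscr{J}_e(L)$ and $L$ is semiprime) gives $(I^s)^2 = [I^s, \alpha(I^s)] \in \mathscr{J}_e(L)$. The key elementary observation is that $I^s \subseteq I$ (by a one-line induction using that $I$ is a Hom-ideal), so $\alpha(I^s) \subseteq \alpha(I)$ and therefore
\[
(I^s)^2 = [I^s, \alpha(I^s)] \subseteq [I^s, \alpha(I)] = I^{s+1}.
\]
Since $I^{s+1}$ is a Hom-ideal containing the essential Hom-ideal $(I^s)^2$, it is itself essential, completing the induction.

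The only even mildly delicate step is spotting the inclusion $(I^s)^2 \subseteq I^{s+1}$; once that is in hand, no additional Hom-Jacobi gymnastics are needed, because the essentiality propagation $J \in \mathscr{J}_e(L) \Rightarrow J^2 \in \mathscr{J}_e(L)$ and the intersection closure of $\mathscr{J}_e(L)$ have already been packaged in the paper.
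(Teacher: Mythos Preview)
Your proof is correct and follows essentially the same route as the paper's. The paper's inductive step verifies directly that $I^{k+1}\cap J\neq\{0\}$ for an arbitrary nonzero Hom-ideal $J$ via the chain $\{0\}\neq[I^k\cap J,\alpha(I^k\cap J)]\subseteq[I^k,\alpha(I)]\cap J=I^{k+1}\cap J$, whereas you factor this through the already-recorded remark that $J\in\mathscr{J}_e(L)\Rightarrow J^2\in\mathscr{J}_e(L)$ together with the inclusion $(I^s)^2\subseteq I^{s+1}$; the underlying computation (in particular the use of $I^s\subseteq I$) is identical.
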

\begin{proof}
We'll show it by induction on $s$. When $s = 1$, it's obvious. Suppose that $\rm{Ann}_{\it{L}}(\it{I^{k}}) = \{\rm{0}\}$, i.e., $I^{k}$ is essential. When $s = k + 1$, for any nonzero Hom-ideal $J$ of $L$, we have $I^{k} \cap J \neq \{0\}$ since $I^{k}$ is essential. So
\[\{0\} \neq [I^{k} \cap J, \alpha(I^{k} \cap J)] \subseteq [I^{k}, \alpha(I)] \cap J = I^{k + 1} \cap J,\]
which implies that $I^{k + 1}$ is essential and so $\rm{Ann}_{\it{L}}(\it{I^{k + 1}}) = \{\rm{0}\}$.

Similarly, we can show that any finite intersection of ideals with zero $\alpha$-annihilator will also have zero $\alpha$-annihilator by induction.
\end{proof}
\begin{prop}\label{prop:5.9}
Suppose that $(L, [\cdot, \cdot], \alpha)$ is a semiprime Hom-subalgebra of $(Q, [\cdot, \cdot], \alpha)$. Then the following conditions are equivalent:

(1) $Q$ is an algebra of quotients of $L$;

(2) $\rm{Ann}(\it{Q}) = \{\rm{0}\}$ and, if $\mu \in A(Q)\setminus\{0\}$, there exists a Hom-ideal $I$ of $L$ with $\rm{Ann}_{\it{L}}(\it{I}) = \{\rm{0}\}$ such that $\mu\tilde{I} \subseteq A_{0}$ and $\{0\} \neq \tilde{I}\mu \subseteq A_{0}$. If $\mu = \rm{ad}\it{_{q}}$, then we also have $\mu\tilde{I}(L) \neq \{0\}$.
\end{prop}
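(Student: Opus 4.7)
The plan is to prove the equivalence in both directions, using Corollary \ref{cor:5.7} and Lemmas \ref{le:5.3}, \ref{le:5.4} as bridges between the Lie and associative sides, and Lemma \ref{le:5.8} to promote the zero $\alpha$-annihilator condition from an ideal $I$ to $I^{2}$ and to finite intersections. The overarching observation I will exploit is the inclusion $I^{2} = [I, \alpha(I)] \subseteq \tilde{I}(L)$ for any Hom-ideal $I$ of $L$, since each $[x, \alpha(y)] = -\mathrm{ad}_{y}(x)$ with $x, y \in I$ lies in $A_{Q}(I)(L) \subseteq \tilde{I}(L)$.

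For (1) $\Rightarrow$ (2), the vanishing of $\mathrm{Ann}(Q)$ is immediate from Proposition \ref{prop:3.3}(2). Given a nonzero $\mu = \sum_{j} \mathrm{ad}_{q_{1,j}} \cdots \mathrm{ad}_{q_{n_{j}, j}}$, I would set $I_{0} = \bigcap_{i,j}(L : q_{i,j})$ and $I = I_{0}^{n}$ with $n = \max_{j} n_{j}$; Proposition \ref{prop:3.8}(1) together with Lemma \ref{le:5.8} (applied to finite intersections and to powers) gives $\mathrm{Ann}_{L}(I) = \{0\}$. Since $[\alpha(q_{i,j}), I_{0}] \subseteq L$ by definition of $(L : q_{i,j})$, Corollary \ref{cor:5.7} applied term by term, combined with $\widetilde{I_{0}^{n}} \subseteq \widetilde{I_{0}^{n_{j}}}$ (from $I_{0}^{n} \subseteq I_{0}^{n_{j}}$), yields $\mu\tilde{I} \subseteq A_{0}$ and $\tilde{I}\mu \subseteq A_{0}$. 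The proof of Lemma \ref{le:5.3} adapts verbatim to show $\mathrm{rann}_{A(Q)}(\tilde{I}) = \mathrm{rann}_{A(Q)}(A_{Q}(I))$; combined with Lemma \ref{le:5.4} this forces $\tilde{I}\mu \neq \{0\}$. Finally, if $\mu = \mathrm{ad}_{q}$ and $\mu\tilde{I}(L) = \{0\}$, the key observation yields $[\alpha(q), I^{2}] = \{0\}$, so $q \in \mathrm{Ann}_{Q}(I^{2})$, which vanishes by Lemma \ref{le:5.8} and Proposition \ref{prop:3.11}, contradicting $\mu \neq 0$.

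For (2) $\Rightarrow$ (1), I would first promote (2) to the weak algebra of quotients property. Given $0 \neq q \in Q$, $\mu = \mathrm{ad}_{q} \neq 0$ (since $\mathrm{Ann}(Q) = \{0\}$); let $I$ be as in (2). Since $A_{Q}(L), A_{Q}(I) \subseteq A_{0}$, we have $\tilde{I} \subseteq A_{0}$, so choosing $\lambda \in \tilde{I}, w \in L$ with $(\mathrm{ad}_{q}\lambda)(w) \neq 0$ produces $x := \lambda(w) \in L$ with $[x, \alpha(q)] = -(\mathrm{ad}_{q}\lambda)(w) \in L \setminus \{0\}$. To upgrade to a full algebra of quotients, given $p, q \in Q$ with $p \neq 0$ and $I$ as above for $q$, the key observation and the inclusion $\mathrm{ad}_{q}\tilde{I}(L) \subseteq L$ yield $[I^{2}, \alpha(q)] \subseteq L$; Proposition \ref{prop:3.1}'s maximality then places $I^{2} \subseteq (L : q)$. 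Consequently $\mathrm{Ann}_{L}((L : q)) \subseteq \mathrm{Ann}_{L}(I^{2}) = \{0\}$ by Lemma \ref{le:5.8}, and Proposition \ref{prop:3.11} (now available, $Q$ being weak) gives $\mathrm{Ann}_{Q}((L : q)) = \{0\}$; from $p \neq 0$ I pick $x \in (L : q)$ with $[x, \alpha(p)] \neq 0$, which is the definition of $Q$ being an algebra of quotients of $L$.

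The main obstacle in both directions is converting the associative control $\mu\tilde{I} \subseteq A_{0}$, which only governs $\mathrm{ad}_{q}$ on the \emph{subspace} $\tilde{I}(L)$, into a statement about a genuine Hom-ideal of $L$ as demanded by Proposition \ref{prop:3.1}. The inclusion $I^{2} \subseteq \tilde{I}(L)$ supplies such a Hom-ideal with zero $\alpha$-annihilator, and this is what ties the two sides of the equivalence together.
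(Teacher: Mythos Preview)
Your argument is correct. The direction (1)$\Rightarrow$(2) is essentially the paper's proof: the paper also intersects ideals $J_{i,j}$ with zero $\alpha$-annihilator (your $(L:q_{i,j})$), takes a suitable power, and invokes Corollary~\ref{cor:5.7} and Lemma~\ref{le:5.4}; only the bookkeeping differs (the paper uses $s=\sum_i r_i$ rather than $\max$, and observes directly that $\tilde I\mu=0$ forces $A_Q(I)\mu=0$ without restating Lemma~\ref{le:5.3}). One small omission: in (2)$\Rightarrow$(1) you implicitly assume $q\neq 0$ when invoking the hypothesis for $\mathrm{ad}_q$; the case $q=0$ is of course trivial since then $(L:q)=L$.

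The direction (2)$\Rightarrow$(1), however, is genuinely more economical than the paper's. The paper defines $I_0:=\operatorname{span}\{\delta(x):\delta\in\tilde I,\ x\in L\}$ and then spends a page-long case-by-case induction verifying that $I_0$ is a Hom-ideal of $L$ with $\mathrm{Ann}_L(I_0)=\{0\}$, in order to conclude via Theorem~\ref{thm:3.10} (ideally absorbed). Your observation that the \emph{ready-made} Hom-ideal $I^2=[I,\alpha(I)]$ already sits inside $\tilde I(L)$ bypasses that computation entirely: once $[\alpha(q),I^2]\subseteq \mathrm{ad}_q\tilde I(L)\subseteq L$, Proposition~\ref{prop:3.1} and Lemma~\ref{le:5.8} give $\mathrm{Ann}_L((L:q))=\{0\}$ directly. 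The two-step passage through the weak-quotient property (to make Proposition~\ref{prop:3.11} available) is a clean substitute for the paper's appeal to Theorem~\ref{thm:3.10}. What the paper's route buys is an explicit description of a large absorbing ideal $I_0=\tilde I(L)$; what yours buys is brevity.
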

\begin{proof}
(2) $\Rightarrow$ (1) Let $q \in Q\setminus\{0\}$. Then $\mu = \rm{ad}\it{_{q}} \neq \rm{0}$ since $\rm{Ann}(\it{Q}) = \{\rm{0}\}$. Let $\tilde{I}$ be as in (2), so it satisfies $\mu\tilde{I}(L) \neq \{0\}$ and $\mu\tilde{I} \subseteq A_{0}$. Set
\[I_{0} := span\{\delta(x)\;|\;x \in L\; and\; \delta \in \tilde{I}\}.\]
Then $I_{0}$ is a Hom-ideal of $L$ such that $\rm{Ann}_{\it{L}}(\it{I}) = \{\rm{0}\}$. Indeed, for any $\delta \in \tilde{I}$, there exist a natural number $n$, elements $x_{1, i}, \cdots, x_{r_{i}, i} \in L$ and $y_{1, i}, \cdots, y_{s_{i}, i} \in I$ with $0 \leq r_{i} \in \mathbb{N}$ for all $i$ and $\emptyset \neq \{s_{1}, \cdots, s_{n}\} \subseteq \mathbb{N}$, such that
\[\delta = \sum_{i}^{n}\rm{ad}\it{_{x_{1, i}}} \cdots \rm{ad}\it{_{x_{r_{i}, i}}}\rm{ad}\it{_{y_{1, i}}} \cdots \rm{ad}\it{_{y_{s_{i}, i}}}.\]
For any $x, y \in L$, if $r_{i} = 0$ and $s_{i} = 1$, we get
\begin{align*}
&[y, \rm{ad}\it{_{y_{1, i}}}(x)] = [y, [\alpha(y_{1, i}), x]] = [\alpha(y_{1, i}), [y, x]] + [\alpha([y, y_{1, i}]), x]\\
&= \rm{ad}\it{_{y_{1, i}}}([y, x]) + \rm{ad}\it{_{[y, y_{1, i}]}}(x) \in I_{\rm{0}},
\end{align*}
if $r_{i} > 0$ and $s_{i} = 1$, we'll prove
\[[y, \rm{ad}\it{_{x_{1, i}}} \cdots \rm{ad}\it{_{x_{r_{i}, i}}}\rm{ad}\it{_{y_{1, i}}}(x)] \in I_{\rm{0}}\]
by induction on $r_{i}$.
When $r_{i} = 1$, we get
\begin{align*}
&[y, \rm{ad}\it{_{x_{1, i}}}\rm{ad}\it{_{y_{1, i}}}(x)] = [y, \rm{ad}\it{_{y_{1, i}}}\rm{ad}\it{_{x_{1, i}}}(x) + \rm{ad}\it{_{\alpha([x_{1, i}, y_{1, i}])}}(x)]\\
&= [y, \rm{ad}\it{_{y_{1, i}}}\rm{ad}\it{_{x_{1, i}}}(x)] + [y, \rm{ad}\it{_{\alpha([x_{1, i}, y_{1, i}])}}(x)]\\
&= [\alpha(y), \rm{ad}\it{_{y_{1, i}}}([x_{1, i}, x])] + [y, \rm{ad}\it{_{\alpha([x_{1, i}, y_{1, i}])}}(x)] = \rm{ad}\it{_{y}}\rm{ad}\it{_{y_{1, i}}}([x_{1, i}, x]) + [y, \rm{ad}\it{_{\alpha([x_{1, i}, y_{1, i}])}}(x)] \in I_{\rm{0}}.
\end{align*}
Suppose that when $r_{i} = k$, $[y, \rm{ad}\it{_{x_{1, i}}} \cdots \rm{ad}\it{_{x_{k, i}}}\rm{ad}\it{_{y_{1, i}}}(x)] \in I_{\rm{0}}$ for any $x_{1, i}, \cdots, x_{k, i} \in L$ and $y_{1, i} \in I$. When $r_{i} = k + 1$, we get
\begin{align*}
&[y, \rm{ad}\it{_{x_{1, i}}} \cdots \rm{ad}\it{_{x_{k + 1, i}}}\rm{ad}\it{_{y_{1, i}}}(x)]\\
&= \left[y, \left(\rm{ad}\it{_{y_{1, i}}}\rm{ad}\it{_{x_{1, i}}} \cdots \rm{ad}\it{_{x_{k + 1, i}}} + \sum_{j = 1}^{k + 1}\rm{ad}\it{_{x_{1, i}}} \cdots \rm{ad}\it{_{\alpha([x_{j, i}, y_{1, i}])}}\cdots\rm{ad}\it{_{x_{k + 1, i}}}\right)(x)\right]\\
&= [y, \rm{ad}\it{_{y_{1, i}}}\rm{ad}\it{_{x_{1, i}}} \cdots \rm{ad}\it{_{x_{k + 1, i}}}(x)] + \left[y, \sum_{j = 1}^{k + 1}\rm{ad}\it{_{x_{1, i}}} \cdots \rm{ad}\it{_{\alpha([x_{j, i}, y_{1, i}])}}\cdots\rm{ad}\it{_{x_{k + 1, i}}}(x)\right]\\
&= [\alpha(y), \rm{ad}\it{_{y_{1, i}}}\rm{ad}\it{_{x_{1, i}}} \cdots \rm{ad}\it{_{x_{k, i}}}([x_{k + 1, i}, x])] + \sum_{j = 1}^{k}[\alpha(y), \rm{ad}\it{_{x_{1, i}}} \cdots \rm{ad}\it{_{\alpha([x_{j, i}, y_{1, i}])}}\cdots\rm{ad}\it{_{x_{k, i}}}([x_{k + 1, i}, x])]\\
&+ [y, \rm{ad}\it{_{x_{1, i}}} \cdots \rm{ad}\it{_{x_{k, i}}}\rm{ad}\it{_{\alpha([x_{k + 1, i}, y_{1, i}])}}(x)]\\
&= \rm{ad}\it{_{y}}\rm{ad}\it{_{y_{1, i}}}\rm{ad}\it{_{x_{1, i}}} \cdots \rm{ad}\it{_{x_{k, i}}}([x_{k + 1, i}, x]) + \sum_{j = 1}^{k}\rm{ad}\it{_{y}}\rm{ad}\it{_{x_{1, i}}} \cdots \rm{ad}\it{_{\alpha([x_{j, i}, y_{1, i}])}}\cdots\rm{ad}\it{_{x_{k, i}}}([x_{k + 1, i}, x])\\
&+ [y, \rm{ad}\it{_{x_{1, i}}} \cdots \rm{ad}\it{_{x_{k, i}}}\rm{ad}\it{_{\alpha([x_{k + 1, i}, y_{1, i}])}}(x)]\\
&= \rm{ad}\it{_{y}}\rm{ad}\it{_{x_{1, i}}} \cdots \rm{ad}\it{_{x_{k, i}}}\rm{ad}\it{_{y_{1, i}}}([x_{k + 1, i}, x]) - \sum_{j = 1}^{k}\rm{ad}\it{_{y}}\rm{ad}\it{_{x_{1, i}}} \cdots \rm{ad}\it{_{\alpha([x_{j, i}, y_{1, i}])}}\cdots\rm{ad}\it{_{x_{k, i}}}([x_{k + 1, i}, x])\\
&+ \sum_{j = 1}^{k}\rm{ad}\it{_{y}}\rm{ad}\it{_{x_{1, i}}} \cdots \rm{ad}\it{_{\alpha([x_{j, i}, y_{1, i}])}}\cdots\rm{ad}\it{_{x_{k, i}}}([x_{k + 1, i}, x])  + [y, \rm{ad}\it{_{x_{1, i}}} \cdots \rm{ad}\it{_{x_{k, i}}}\rm{ad}\it{_{\alpha([x_{k + 1, i}, y_{1, i}])}}(x)]\\
&= \rm{ad}\it{_{y}}\rm{ad}\it{_{x_{1, i}}} \cdots \rm{ad}\it{_{x_{k, i}}}\rm{ad}\it{_{y_{1, i}}}([x_{k + 1, i}, x]) + [y, \rm{ad}\it{_{x_{1, i}}} \cdots \rm{ad}\it{_{x_{k, i}}}\rm{ad}\it{_{\alpha([x_{k + 1, i}, y_{1, i}])}}(x)] \in I_{\rm{0}},
\end{align*}
if $s_{i} > 1$, we have
\begin{align*}
&[y, \rm{ad}\it{_{x_{1, i}}} \cdots \rm{ad}\it{_{x_{r_{i}, i}}}\rm{ad}\it{_{y_{1, i}}} \cdots \rm{ad}\it{_{y_{s_{i}, i}}}(x)] = [y, \rm{ad}\it{_{x_{1, i}}} \cdots \rm{ad}\it{_{x_{r_{i}, i}}}\rm{ad}\it{_{y_{1, i}}} \cdots \rm{ad}\it{_{y_{s_{i} - 1, i}}}([\alpha(y_{s_{i}, i}), x])]\\
&= [\alpha(y), \rm{ad}\it{_{x_{1, i}}} \cdots \rm{ad}\it{_{x_{r_{i}, i}}}\rm{ad}\it{_{y_{1, i}}} \cdots \rm{ad}\it{_{y_{s_{i} - 1, i}}}([y_{s_{i}, i}, x])]\\
&= \rm{ad}\it{_{y}}\rm{ad}\it{_{x_{1, i}}} \cdots \rm{ad}\it{_{x_{r_{i}, i}}}\rm{ad}\it{_{y_{1, i}}} \cdots \rm{ad}\it{_{y_{s_{i} - 1, i}}}([y_{s_{i}, i}, x]) \in I_{\rm{0}}.
\end{align*}
Therefore we get $[y, \delta(x)] \in I_{0}$, i.e., $[L, I_{0}] \subseteq I_{0}$. It's straightforward to show that $\alpha(I_{0}) \subseteq I_{0}$.

If now $[x, \alpha(I_{0})] = \{0\}$ for $x \in L$, then $\rm{ad}\it{_{x}}\tilde{I}(L) = \{\rm{0}\}$. In particular, for $y, z \in I$, we have $\rm{ad}\it{_{x}}\rm{ad}\it{_{y}}(z) = \rm{0}$ and so $x \in \rm{Ann}_{\it{L}}(\it{I^{2}}) = \{\rm{0}\}$, which is zero by Lemma \ref{le:5.8}.

Finally, $\{0\} \neq [\alpha(q), I_{0}] = \rm{ad}\it{_{q}}\tilde{I}(L) \subseteq A_{\rm{0}}(L) \subseteq L$, which implies that $Q$ is ideally absorbed into $L$. According to Theorem \ref{thm:3.10}, $Q$ is an algebra of quotients of $L$.

(1) $\Rightarrow$ (2) Since $Q$ is an algebra of quotients of $L$, we have $\rm{Ann}(\it{Q}) = \{\rm{0}\}$. Next let $\mu = \sum_{i \geq 1}\rm{ad}\it{_{q_{i, 1}}} \cdots \rm{ad}\it{_{q_{i, r_{i}}}} \in A(Q)\setminus\{\rm{0}\}$. We may of course assume that all $q_{i, j}$ are nonzero elements in $Q$. Set $s = \sum_{i \geq 1}r_{i}$. As $Q$ is an algebra of quotients of $L$, there exists, for every $i$ and $j$, a Hom-ideal $J_{i, j}$ of $L$ such that $\rm{Ann}_{\it{L}}(\it{J_{i, j}}) = \{\rm{0}\}$, and $\{0\} \neq [\alpha(q_{i, j}), J_{i, j}] \subseteq L$. By Lemma \ref{le:5.8}, the Hom-ideal $J = \cap_{i, j}J_{i, j}$ and hence also $I = J^{s}$ will have zero $\alpha$-annihilator in $L$. Moreover, $[\alpha(q_{i, j}), I] \subseteq [\alpha(q_{i, j}), J_{i, j}] \subseteq L$. According to Corollary \ref{cor:5.7} and taking into account that $J^{s} \subseteq J^{r_{i}}$ for every $i$, we have $\mu\tilde{I} \subseteq A_{0}$ and $\tilde{I}\mu \subseteq A_{0}$.

If $\tilde{I}\mu = \{0\}$, then $\mu \in rann_{A(Q)}(A_{Q}(I))$ which is zero by Lemma \ref{le:5.4}.

Finally, suppose that $\mu = \rm{ad}\it{_{q}}$ where $q \in A(Q)\setminus\{0\}$. Note that $Q$ is also an algebra of quotients of $I^{2}$, this implies that there exist elements $y, z \in I$ such that $0 \neq [\alpha(q), [y, \alpha(z)]]$. But $[\alpha(q), [y, \alpha(z)]] = \rm{ad}\it{_{q}}\rm{ad}\it{_{y}}(z) \in \rm{ad}\it{_{q}}\tilde{I}(L)$.
\end{proof}
Recall that an associative algebra $S$ is said to be a left quotient algebra of a subalgebra $A$ if whenever $p$ and $q \in S$, with $p \neq 0$, there exists $x$ in $A$ such that $xp \neq 0$ and $xq \in A$. An associative algebra $A$ has a left quotient algebra if and only if it has no total right zero divisors different from zero. (Here, an element $x$ in $A$ is said to be a total right zero divisor if $Ax = \{0\}$.)
\begin{lem}\label{le:5.10}\cite{S1}
Let $A$ be a subalgebra of an associative algebra $Q$. Then $Q$ is a left quotient algebra of $L$ if and only if for every nonzero element $q \in Q$ there exists a left ideal $I$ of $A$ with $rann_{A}(I) = \{0\}$ such that $\{0\} \neq Iq \subseteq A$.
\end{lem}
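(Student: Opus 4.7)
The plan is to prove the two implications separately, with the left ideal $\mathcal{I}_q := \{a \in A : aq \in A\}$ of $A$ serving as the canonical candidate for the ideal $I$. It is routine that $\mathcal{I}_q$ is indeed a left ideal, since for any $b \in A$ and $a \in \mathcal{I}_q$ one has $(ba)q = b(aq) \in bA \subseteq A$, so $ba \in \mathcal{I}_q$.

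For the direction $(\Rightarrow)$, suppose $Q$ is a left quotient algebra of $A$ and fix $q \in Q \setminus \{0\}$. I take $I := \mathcal{I}_q$, which gives $Iq \subseteq A$ by definition. Applying the left quotient property to the pair $(p, q) = (q, q)$ produces $x \in A$ with $xq \neq 0$ and $xq \in A$, so $x \in I$ and $Iq \neq \{0\}$. To verify $rann_A(I) = \{0\}$, I suppose $b \in A$ satisfies $Ib = \{0\}$; if $b \neq 0$, the left quotient property applied to $(p, q) = (b, q)$ gives $y \in A$ with $yb \neq 0$ and $yq \in A$, placing $y \in I$ and yielding $0 \neq yb \in Ib$, a contradiction, so $b = 0$.

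For the direction $(\Leftarrow)$, given $p, q \in Q$ with $p \neq 0$, the goal is to exhibit $x \in A$ with $xp \neq 0$ and $xq \in A$. The case $q = 0$ is immediate: apply the hypothesis to $p$ to obtain a left ideal $K$ with $rann_A(K) = \{0\}$ and $0 \neq Kp \subseteq A$, and pick $x \in K$ with $xp \neq 0$, so that $xq = 0 \in A$. When $q \neq 0$, the hypothesis applied to $q$ furnishes a left ideal $J$ with $rann_A(J) = \{0\}$ and $0 \neq Jq \subseteq A$; every $x \in J$ automatically satisfies $xq \in A$, reducing the task to producing some $x \in J$ with $xp \neq 0$, which is immediate once $Jp \neq \{0\}$ is established.

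The main obstacle is excluding the degenerate case $Jp = \{0\}$. My strategy here is to invoke the hypothesis on $p$ to produce a second left ideal $K$ with $rann_A(K) = \{0\}$ and $0 \neq Kp \subseteq A$, and then form the product $JK$, which is a left ideal of $A$ contained in $K$. A cascading annihilator argument shows $rann_A(JK) = \{0\}$: if $JKb = 0$ then $Kb \subseteq rann_A(J) = \{0\}$, so $Kb = 0$, forcing $b \in rann_A(K) = \{0\}$. Since $Kp$ is a nonzero subset of $A$ and $rann_A(J) = \{0\}$, one obtains $(JK)p = J(Kp) \neq \{0\}$, producing $y \in J$ and $z \in K$ with $(yz)p \neq 0$. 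The delicate final step is to refine this so that the resulting element also lies in $\mathcal{I}_q$, i.e.\ satisfies $(yz)q \in A$; the coordination needed to force one element of $A$ to act correctly on both $p$ and $q$ is the technical crux of the argument and requires exploiting the annihilator structure of $\mathcal{I}_q$ one more time, after which the contradiction with $Jp = \{0\}$ closes the proof.
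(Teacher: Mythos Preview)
The paper does not supply a proof of this lemma; it is quoted from \cite{S1} without argument, so there is no paper proof to compare against and I assess your attempt directly.

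Your forward direction is correct. The backward direction, however, is left genuinely incomplete. Having produced $y\in J$ and $z\in K$ with $(yz)p\neq 0$, you note that $yz\in JK\subseteq K$, not $J$, so no contradiction with $Jp=\{0\}$ has yet appeared; you then announce a ``delicate final step'' invoking ``the annihilator structure of $\mathcal I_q$ one more time'' and a closing ``contradiction with $Jp=\{0\}$'', but you carry out neither, and one further pass through $rann_A(\mathcal I_q)=\{0\}$ alone will not do the job: left-multiplying $yz$ by an element $a\in\mathcal I_q$ still gives no control over $(ayz)q$, since $\mathcal I_q$ is only a \emph{left} ideal. What is actually missing is a \emph{second} application of the hypothesis, to a new element of $Q$. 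Concretely, set $w=yz$ (or, more simply, take any $k\in K$ with $kp\neq 0$ and bypass $J$ entirely); the key observation you have not exploited is that $wp=y(zp)\in A$ because $zp\in Kp\subseteq A$. If $wq=0$, take $x=w$. Otherwise apply the hypothesis to the nonzero element $wq\in Q$ to obtain a left ideal $I'$ with $rann_A(I')=\{0\}$ and $I'(wq)\subseteq A$; then $(I'w)q\subseteq A$, while $(I'w)p=I'(wp)\neq\{0\}$ since $0\neq wp\in A$. Any $i\in I'$ with $i(wp)\neq 0$ yields $x=iw$ satisfying $xp\neq 0$ and $xq\in A$. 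The contradiction framing is unnecessary.
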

\begin{thm}\label{thm:5.11}
Suppose that $(L, [\cdot, \cdot], \alpha)$ is a semiprime Hom-subalgebra of $(Q, [\cdot, \cdot], \alpha)$. Moreover, suppose that $Q$ is an algebra of quotients of $L$. Then $A(Q)$ is a left quotient algebra of $A_{0}$.
\end{thm}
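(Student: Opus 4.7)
The plan is to invoke Lemma~\ref{le:5.10}: given $\mu \in A(Q)\setminus\{0\}$, I must exhibit a left ideal $J$ of $A_{0}$ with $rann_{A_{0}}(J) = \{0\}$ and $\{0\} \neq J\mu \subseteq A_{0}$. Proposition~\ref{prop:5.9} is the natural source of candidates: since $Q$ is an algebra of quotients of the semiprime $L$, it yields a Hom-ideal $I$ of $L$ with $\rm{Ann}\it{_{L}(I)} = \{\rm{0}\}$ satisfying $\mu\tilde{I} \subseteq A_{0}$ and $\{0\} \neq \tilde{I}\mu \subseteq A_{0}$. The subtlety is that $\tilde{I}$ is only an ideal of $A_{Q}(L)$ and not a priori a left ideal of the larger algebra $A_{0}$, so I will work with $J := A_{0}A_{Q}(I) + A_{Q}(I)$, the left ideal of $A_{0}$ generated by $A_{Q}(I)$.

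Verifying $\{0\} \neq J\mu \subseteq A_{0}$ is then routine. Lemma~\ref{le:5.2} gives $\tilde{I} = A_{Q}(L)A_{Q}(I) + A_{Q}(I)$, and since $A_{Q}(L) \subseteq A_{0}$ we have $\tilde{I} \subseteq J$, whence $J\mu \supseteq \tilde{I}\mu \neq \{0\}$. For the reverse containment, $A_{Q}(I)\mu \subseteq \tilde{I}\mu \subseteq A_{0}$ and $A_{0}A_{Q}(I)\mu \subseteq A_{0}\cdot A_{0} \subseteq A_{0}$, giving $J\mu \subseteq A_{0}$.

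The hardest step is $rann_{A_{0}}(J) = \{0\}$, which is where Proposition~\ref{prop:3.11} is crucial. Suppose $\beta \in A_{0}$ satisfies $J\beta = 0$; then in particular $\rm{ad}\it{_{y}}\beta = \rm{0}$ for every $y \in I$, so evaluating at an arbitrary $q \in Q$ yields $[\alpha(y), \beta(q)] = 0$, i.e., $\beta(q) \in \rm{Ann}\it{_{Q}(I)}$. Since $Q$ is (in particular) a weak algebra of quotients of $L$ and $\rm{Ann}\it{_{L}(I)} = \{\rm{0}\}$, Proposition~\ref{prop:3.11} promotes this to $\rm{Ann}\it{_{Q}(I)} = \{\rm{0}\}$, forcing $\beta(q) = 0$ for all $q \in Q$, hence $\beta = 0$.

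The main obstacle throughout is purely structural: the ideal $\tilde{I}$ handed to us by Proposition~\ref{prop:5.9} lives inside $A_{Q}(L)$, so I must enlarge it to a left ideal of $A_{0}$ without sacrificing either $\tilde{I}\mu \subseteq A_{0}$ (this survives because multiplication by $A_{0}$ on the left preserves $A_{0}$) or the zero right annihilator (this survives because the witnesses $\rm{ad}\it{_{y}}$, $y \in I$, already sit inside $J$). Beyond this transfer from the $A_{Q}(L)$-world to the $A_{0}$-world, the argument is a direct concatenation of Propositions~\ref{prop:5.9} and~\ref{prop:3.11} with Lemmas~\ref{le:5.2} and~\ref{le:5.10}.
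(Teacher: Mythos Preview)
Your proof is correct and follows essentially the same approach as the paper. The paper takes $J = A_{0}\tilde{I} + \tilde{I}$ (which coincides with your $J$ since $\tilde{I} = A_{Q}(L)A_{Q}(I) + A_{Q}(I) \subseteq A_{0}A_{Q}(I) + A_{Q}(I)$ by Lemma~\ref{le:5.2}), and for the vanishing of $rann_{A_{0}}(J)$ it cites Lemma~\ref{le:5.4}, whose proof is exactly the argument you give via Proposition~\ref{prop:3.11}.
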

\begin{proof}
Let $\mu \in A(Q)\setminus\{0\}$ and let $I$ be a Hom-ideal of $L$ satisfying condition (2) in Proposition \ref{prop:5.9}. Set $J = A_{0}\tilde{I} + \tilde{I}$, a left ideal of $A_{0}$ that satisfies $\{0\} \neq J\mu \subseteq A_{0}$ (because $\{0\} \neq \tilde{I}\mu \subseteq A_{0}$).

Since also $\rm{Ann}_{\it{L}}(\it{I}) = \{\rm{0}\}$, we obtain from Lemma \ref{le:5.4} that $rann_{A(Q)}(A_{Q}(I)) = \{0\}$. This, together with the fact that $\tilde{I}$ contains $A_{Q}(I)$, implies that $rann_{A(Q)}(\tilde{I}) = \{0\}$. Since $rann_{A_{0}}(\tilde{I}) \subseteq rann_{A(Q)}(\tilde{I})$ we get that also $rann_{A_{0}}(\tilde{I}) = \{0\}$. Hence we get $rann_{A_{0}}(J) = \{0\}$ since $\tilde{I} \subseteq J$. This concludes the proof according to Lemma \ref{le:5.10}.
\end{proof}
\begin{re}\label{re:5.12}
As established in the proof of the previous result, if $(L, [\cdot, \cdot], \alpha)$ is a Hom-subalgebra of $(Q, [\cdot, \cdot], \alpha)$ and if $(L, [\cdot, \cdot], \alpha)$ is semiprime and $I$ is a Hom-ideal of $(L, [\cdot, \cdot], \alpha)$ with $\rm{Ann}_{\it{L}}(\it{I}) = \{\rm{0}\}$, then $A_{0}\tilde{I} + \tilde{I}$ is a left ideal of $A_{0}$ with zero right annihilator.
\end{re}
\section{Algebras of quotients of Hom-Lie algebras with dense extensions}\label{se:6}
In this section, we mainly study algebras of quotients of Hom-Lie algebras via their dense extensions which is introduced by Cabrera in \cite{C1}. We show that dense extension can be lifted from a Hom-Lie algebra to its Hom-ideals if the extension is also an algebra of quotients. For any Hom-Lie algebra $(L, [\cdot, \cdot], \alpha)$, let $M(L)$ be the associative algebra generated by the identity map together with inner derivations of $L$. Moreover, $M(L)$ is nothing but the unitization of $A(L)$.

Following \cite{C1}, given an extension of Hom-Lie algebras $L \subseteq Q$, the annihilator of $L$ in $M(Q)$ is defined by
\[L^{ann} := \{\mu \in M(Q)\; |\; \mu(x) = 0,\; x \in L\}.\]
If $L^{ann} = \{0\}$, then $L$ is said to be a dense Hom-subalgebra of $Q$, and we will say that $L \subseteq Q$ is a dense extension of Hom-Lie algebras.
\begin{lem}\label{le:6.1}
Suppose that $(L, [\cdot, \cdot], \alpha)$ is a Hom-subalgebra of $(Q, [\cdot, \cdot], \alpha)$ with $\rm{Ann}(\it{Q}) = \{\rm{0}\}$. Then the following conditions are equivalent:
\begin{enumerate}[(1)]
\item $L$ is a dense Hom-subalgebra of $Q$.
\item If $\mu(L) = \{0\}$ for some $\mu$ in $A(Q)$, then $\mu = 0$.
\end{enumerate}
\end{lem}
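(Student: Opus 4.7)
The plan is to prove the two implications separately, exploiting the structural relation $M(Q) = A(Q) + \mathbb{F}\cdot\mathrm{id}$, which in particular makes $A(Q)$ a two-sided ideal of $M(Q)$. The direction $(1)\Rightarrow(2)$ is essentially immediate by inclusion: if $\mu\in A(Q)\subseteq M(Q)$ satisfies $\mu(L)=\{0\}$, then by definition $\mu\in L^{ann}$, and density forces $\mu=0$.

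For $(2)\Rightarrow(1)$, the strategy is to take an arbitrary $\mu\in L^{ann}$, produce from it elements of $A(Q)$ that vanish on $L$ so that hypothesis (2) can be invoked, and then use $\mathrm{Ann}(Q)=\{0\}$ to conclude $\mu=0$ as an endomorphism of $Q$. Concretely, I would fix $\mu\in L^{ann}$ and an arbitrary $q\in Q$; since $A(Q)$ is an ideal of $M(Q)$, the composition $\mathrm{ad}_q\,\mu$ lies in $A(Q)$. For any $x\in L$, $(\mathrm{ad}_q\mu)(x) = \mathrm{ad}_q(\mu(x)) = \mathrm{ad}_q(0) = 0$, so $\mathrm{ad}_q\mu$ vanishes on $L$. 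Hypothesis (2) then yields $\mathrm{ad}_q\mu = 0$, i.e.\ $[\alpha(q),\mu(p)] = 0$ for all $p,q\in Q$. This means $\mu(p)\in\mathrm{Ann}(Q) = \{0\}$ for every $p\in Q$, so $\mu$ is the zero map on $Q$; since $M(Q)$ is realised concretely as a subalgebra of $\mathrm{End}(Q)$, this gives $\mu = 0$ in $M(Q)$, hence $L^{ann} = \{0\}$.

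I do not expect a genuine obstacle here. The only points requiring a little care are the book-keeping separation of $A(Q)\subseteq M(Q)$ (to apply (2), which is stated for $A(Q)$, to something that a priori sits in $M(Q)$) and the observation that elements of $M(Q)$ are identified through their action on $Q$, so vanishing as a map is the same as vanishing as an element. Both are formalities; the substantive content is the passage from $\mu(L)=\{0\}$ to $\mu(Q)\subseteq\mathrm{Ann}(Q)$ via left multiplication by the inner derivations $\mathrm{ad}_q$.
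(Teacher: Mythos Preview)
Your proof is correct and follows essentially the same route as the paper: both use that $A(Q)$ is an ideal of $M(Q)$ to conclude $\mathrm{ad}_q\mu\in A(Q)$, apply (2) to get $\mathrm{ad}_q\mu=0$, and then invoke $\mathrm{Ann}(Q)=\{0\}$ to force $\mu=0$. The only cosmetic difference is that the paper argues by contradiction (assuming $\mu(p)\neq 0$ and producing a specific $q$ with $\mathrm{ad}_q\mu(p)\neq 0$), whereas you proceed directly by quantifying over all $q$; the substance is the same.
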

\begin{proof}
Clearly, (1) implies (2). Conversely, suppose that $\mu \in M(Q)$ satisfies $\mu(L) = \{0\}$. If $\mu(p) \neq 0$ for some $p$ in $Q$, then use $\rm{Ann}(\it{Q}) = \{\rm{0}\}$ to find a nonzero elements $q$ in $Q$ satisfying $\rm{ad}\it{_{q}}\mu(p) \neq \rm{0}$. But then $\rm{ad}\it{_{q}}\mu(L) = \{\rm{0}\}$ and since $A(Q)$ is a two-sided ideal of $M(Q)$, we have that $\rm{ad}\it{_{q}}\mu \in A(Q)$. Hence, condition (2) yields $\rm{ad}\it{_{q}}\mu = \rm{0}$, a contradiction.
\end{proof}
Following Cabrera and Mohammed in \cite{CM}, we say that a Hom-Lie algebra $(L, [\cdot, \cdot], \alpha)$ is multiplicatively semiprime whenever $L$ and its multiplication algebra $M(L)$ are semiprime. Observe that in this situation, and if $L$ is a Hom-Lie algebra, then $A(L)$, being an ideal of $M(L)$, will also be a semiprime algebra.
\begin{lem}\label{le:6.2}
Suppose that $(L, [\cdot, \cdot], \alpha)$ is a dense Hom-subalgebra of $(Q, [\cdot, \cdot], \alpha)$ and $I$ a Hom-ideal of $L$. If $\mu$ is an element of $M(Q)$ such that $\mu(I) = \{0\}$, then $\mu(M(Q)(I)) = \{0\}$.
\end{lem}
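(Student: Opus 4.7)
The plan is to establish, by induction on $n\ge 0$, the assertion
$$(\star_{n})\qquad \mu\,\mathrm{ad}_{q_{1}}\cdots\mathrm{ad}_{q_{n}}(x)=0\ \text{ for all }\ q_{1},\ldots,q_{n}\in Q\ \text{ and all }\ x\in I,$$
with the empty product read as the identity when $n=0$. Since every element of $M(Q)$ is a linear combination of $\mathrm{id}$ and finite products $\mathrm{ad}_{q_{1}}\cdots\mathrm{ad}_{q_{n}}$, this at once yields $\mu(M(Q)(I))=\{0\}$. The base case $(\star_{0})$ is nothing but the hypothesis $\mu(I)=\{0\}$.

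For the inductive step (so $n\ge 1$), the essential tool is the swap identity coming from $(\ref{eq:2.1})$: for $q\in Q$ and $x\in I$,
$$\mathrm{ad}_{q}(x)=[\alpha(q),x]=[q,\alpha(x)]=-[\alpha(x),q]=-\mathrm{ad}_{x}(q).$$
Applying this to the innermost evaluation rewrites
$$\mu\,\mathrm{ad}_{q_{1}}\cdots\mathrm{ad}_{q_{n}}(x)=-\mu\,\mathrm{ad}_{q_{1}}\cdots\mathrm{ad}_{q_{n-1}}\mathrm{ad}_{x}(q_{n})=-\lambda(q_{n}),$$
where $\lambda:=\mu\,\mathrm{ad}_{q_{1}}\cdots\mathrm{ad}_{q_{n-1}}\mathrm{ad}_{x}\in M(Q)$. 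Since $q_{n}\in Q$ is arbitrary, it suffices to prove $\lambda=0$, and by density of $L$ in $Q$ (that is, $L^{ann}=\{0\}$) this reduces to verifying $\lambda(L)=\{0\}$. For any $y\in L$, the Hom-ideal property of $I$ in $L$ gives $\mathrm{ad}_{x}(y)=[\alpha(x),y]\in[I,L]\subseteq I$, and the inductive hypothesis $(\star_{n-1})$ applied to the shorter product $\mathrm{ad}_{q_{1}}\cdots\mathrm{ad}_{q_{n-1}}$ and to the element $\mathrm{ad}_{x}(y)\in I$ yields $\mu\,\mathrm{ad}_{q_{1}}\cdots\mathrm{ad}_{q_{n-1}}(\mathrm{ad}_{x}(y))=0$. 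Hence $\lambda(L)=\{0\}$, whence $\lambda=0$, closing the induction.

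The main obstacle, to my mind, is finding the right way to couple the hypothesis $\mu(I)=\{0\}$ with arbitrary operators in $M(Q)$: a direct induction stumbles because $\mathrm{ad}_{q}(I)$ need not be contained in $I$ when $q\in Q\setminus L$. The $(\ref{eq:2.1})$-symmetry $\mathrm{ad}_{q}(x)=-\mathrm{ad}_{x}(q)$ is exactly the move that breaks this deadlock: it trades an outer $q_{n}\in Q$ for $x\in I$ placed inside, after which the statement to verify is an equality in $M(Q)$ and density converts it to a check on $L$, where the Hom-ideal property sends things back into $I$ and the inductive hypothesis finishes the job.
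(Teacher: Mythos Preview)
Your proof is correct and follows essentially the same approach as the paper's: both hinge on the swap $\mathrm{ad}_{q}(x)=-\mathrm{ad}_{x}(q)$ to pull an element of $I$ into operator position, then invoke density of $L$ in $Q$ to kill the resulting operator in $M(Q)$. The only difference is organizational---the paper packages the argument via the auxiliary subalgebra $\mathcal{H}=\{\varphi\in M(Q)\mid \mu\omega\varphi(I)=\{0\}\ \text{whenever}\ \mu\omega(I)=\{0\}\}$ and shows $\mathcal{H}=M(Q)$, which is precisely your induction unwound.
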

\begin{proof}
Consider the set
\[\mathcal{H} = \{\varphi \in M(Q)\;|\;\mu\omega\varphi(I) = \{0\}\;\rm{for\;each}\;\omega \in \it{M(Q)}\;\rm{such\;that}\;\mu\omega(\it{I}) = \{\rm{0}\}\}.\]
It's clear that $\mathcal{H}$ is a subalgebra of $M(Q)$. Indeed, for any $\varphi, \psi \in \mathcal{H}$ and each $\omega \in M(Q)$ such that $\mu\omega(I) = \{0\}$, we get that $\mu\omega\varphi\psi(I) = \{0\}$ since $\omega\varphi \in M(Q)$ satisfying $\mu\omega\varphi(I) = \{0\}$ and $\psi \in \mathcal{H}$, which implies that $\varphi\psi \in \mathcal{H}$.

Furthermore, for $\omega \in M(Q)$ such that $\mu\omega(I) = \{0\}$, $x \in I$ and $y \in L$ we see that
\[0 = \mu\omega([\alpha(x), y]) = \mu\omega\rm{ad}\it{_{x}}(y).\]
Therefore $\mu\omega\rm{ad}\it{_{x}}$ lies in $L^{ann}$, and so it is equal to zero. Consequently for each $z \in Q$, it can be seen that
\[0 = \mu\omega\rm{ad}\it{_{x}}(z) = \mu\omega([\alpha(x), z]) = -\mu\omega([\alpha(z), x]) = -\mu\omega\rm{ad}\it{_{z}}(x).\]
From this it follows that $\mu\omega\rm{ad}\it{_{z}}(I) = \{\rm{0}\}$, and hence $\rm{ad}\it{_{z}}$ belongs to $\mathcal{H}$. Thus $\mathcal{H} = M(Q)$. Finally the statement follows directly from the definition of $\mathcal{H}$ and from the fact that, by assumption, $\mu\rm{id}_{\it{Q}}(\it{I}) = \{\rm{0}\}$.
\end{proof}
\begin{cor}\label{cor:6.3}
Suppose that $(L, [\cdot, \cdot], \alpha)$ is a dense Hom-subalgebra of $(Q, [\cdot, \cdot], \alpha)$ and $I, J$ two Hom-ideals of $L$. If $[\alpha(I), J] = \{0\}$, then $[M(Q)(I), \alpha(M(Q)(J))] = \{0\}$.
\end{cor}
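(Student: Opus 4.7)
The plan is to apply Lemma \ref{le:6.2} twice, with a short bridge step that invokes the centroid identity (\ref{eq:2.1}). The key observation is that the hypothesis $[\alpha(I), J] = \{0\}$ is exactly the statement that $\mathrm{ad}_{x}(J) = [\alpha(x), J] = \{0\}$ for every $x \in I$, which puts each $\mathrm{ad}_{x}$ in the position to play the role of $\mu$ in Lemma \ref{le:6.2}.

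First, I would fix $x \in I$. Then $\mathrm{ad}_{x} \in A(Q) \subseteq M(Q)$ and $\mathrm{ad}_{x}(J) = \{0\}$, so Lemma \ref{le:6.2}, applied with the Hom-ideal $J$, gives $\mathrm{ad}_{x}(M(Q)(J)) = \{0\}$. Letting $x$ run through $I$ this reads $[\alpha(I), M(Q)(J)] = \{0\}$. Using (\ref{eq:2.1}) in the form $[\alpha(u), v] = [u, \alpha(v)]$, this is equivalent to $[I, \alpha(M(Q)(J))] = \{0\}$, which by skew-symmetry gives $[\alpha(M(Q)(J)), I] = \{0\}$.

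Now I am in position for the second application. For each $y \in M(Q)(J) \subseteq Q$, the inner derivation $\mathrm{ad}_{y}$ is a legitimate element of $A(Q) \subseteq M(Q)$, and the previous paragraph says $\mathrm{ad}_{y}(I) = [\alpha(y), I] = \{0\}$. Applying Lemma \ref{le:6.2} once more, this time with the Hom-ideal $I$, yields $\mathrm{ad}_{y}(M(Q)(I)) = \{0\}$, i.e., $[\alpha(y), M(Q)(I)] = \{0\}$. Letting $y$ range over $M(Q)(J)$ and using skew-symmetry one last time, I obtain $[M(Q)(I), \alpha(M(Q)(J))] = \{0\}$, as desired.

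I do not anticipate a real obstacle: Lemma \ref{le:6.2} does the heavy lifting, and the only point demanding care is the routine shuffling between $[\alpha(\cdot), \cdot]$ and $[\cdot, \alpha(\cdot)]$, which is legitimate by the blanket assumption (\ref{eq:2.1}) made in Section \ref{se:2}. The mild subtlety worth flagging is that in the second application one uses $\mathrm{ad}_{y}$ for $y$ in $M(Q)(J) \subseteq Q$ rather than in $L$; this is fine because $A(Q)$ is generated precisely by the inner derivations $\mathrm{ad}_{q}$ for $q \in Q$.
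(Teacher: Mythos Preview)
Your proof is correct and follows exactly the same two-step strategy as the paper: apply Lemma \ref{le:6.2} first to the Hom-ideal $J$ with $\mu = \mathrm{ad}_{x}$ for $x \in I$, then to the Hom-ideal $I$ with $\mu = \mathrm{ad}_{z}$ for $z \in M(Q)(J)$. Your version is in fact more explicit than the paper's, spelling out the use of (\ref{eq:2.1}) and skew-symmetry in the bridge step, which the paper leaves implicit.
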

\begin{proof}
First, apply Lemma \ref{le:6.2} to the Hom-ideal $J$ taking $\mu = \rm{ad}\it{_{x}}$ for any $x \in I$ to obtain the result $[\alpha(I), M(Q)(J)] = \{0\}$. Now, to conclude the proof, apply once more Lemma \ref{le:6.2} to the Hom-ideal $I$ taking $\mu = \rm{ad}\it{_{z}}$ where $z \in M(Q)(J)$.
\end{proof}
\begin{cor}\label{cor:6.4}
Suppose that $(L, [\cdot, \cdot], \alpha)$ is a dense Hom-subalgebra of $(Q, [\cdot, \cdot], \alpha)$. If $(Q, [\cdot, \cdot], \alpha)$ is semiprime, then $(L, [\cdot, \cdot], \alpha)$ is also semiprime.
\end{cor}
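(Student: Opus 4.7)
The plan is to argue by contradiction and produce a nonzero Hom-ideal of $Q$ whose $[\cdot, \alpha(\cdot)]$-bracket vanishes, contradicting the semiprimeness of $Q$. First, I would extract two preliminary facts from the hypotheses. Since $Q$ is semiprime, Remark~\ref{re:2.9} gives $\rm{Ann}\it{(Q)} = \{0\}$. Moreover, if $x \in \ker\alpha$, then for every $p \in Q$ one has $[x, \alpha(p)] = [\alpha(x), p] = 0$ by (\ref{eq:2.1}), so $x \in \rm{Ann}\it{(Q)} = \{0\}$; hence $\alpha$ acts injectively on $Q$. Now suppose for contradiction that $L$ is not semiprime: there exists a nonzero Hom-ideal $I$ of $L$ with $[I, \alpha(I)] = \{0\}$, hence $[\alpha(I), I] = \{0\}$, and Corollary~\ref{cor:6.3} (applied with $J = I$) yields $[M(Q)(I), \alpha(M(Q)(I))] = \{0\}$.

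Next I would describe the Hom-ideal $\tilde{I}$ of $Q$ generated by $I$ explicitly via the filtration $V_{0} := I$ and $V_{n+1} := V_{n} + [Q, V_{n}]$, so that $\tilde{I} = \bigcup_{n \geq 0} V_{n}$; each $V_{n}$ is $\alpha$-invariant because $\alpha([q, z]) = [\alpha(q), z]$ by (\ref{eq:2.1}), and closure under bracketing with $Q$ is built in by construction. The key inductive claim is
\[
\alpha^{n}(V_{n}) \subseteq M(Q)(I) \quad \text{for all } n \geq 0.
\]
For $n = 0$ this is immediate. For the inductive step, $\alpha^{n+1}(V_{n}) = \alpha(\alpha^{n}(V_{n})) \subseteq \alpha(M(Q)(I)) \subseteq M(Q)(I)$, since $\alpha$ commutes with every element of $M(Q)$ (each $\rm{ad}_{\it{q}}$ commuting with $\alpha$ by (\ref{eq:2.1})); and for $[q, z] \in [Q, V_{n}]$, iterating (\ref{eq:2.1}) gives $\alpha^{n+1}([q, z]) = [\alpha(q), \alpha^{n}(z)] = \rm{ad}_{\it{q}}(\alpha^{n}(z)) \in M(Q)(I)$, because $\alpha^{n}(z) \in M(Q)(I)$ by the induction hypothesis.

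Finally, for $z_{1} \in V_{n_{1}}$ and $z_{2} \in V_{n_{2}}$, the identity $[\alpha^{a}(x), \alpha^{b}(y)] = \alpha^{a+b}([x, y])$ (a direct consequence of (\ref{eq:2.1})) yields
\[
\alpha^{n_{1} + n_{2} + 1}([z_{1}, z_{2}]) = [\alpha^{n_{1}}(z_{1}), \alpha^{n_{2}+1}(z_{2})] \in [M(Q)(I), \alpha(M(Q)(I))] = \{0\}.
\]
Injectivity of $\alpha$ then forces $[z_{1}, z_{2}] = 0$, so $[\tilde{I}, \tilde{I}] = \{0\}$ and in particular $[\tilde{I}, \alpha(\tilde{I})] = \{0\}$. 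Since $\tilde{I} \supseteq I \neq \{0\}$, the Hom-ideal $\tilde{I}$ of $Q$ is a nonzero witness to the failure of semiprimeness of $Q$, a contradiction. The main technical obstacle is the inductive claim $\alpha^{n}(V_{n}) \subseteq M(Q)(I)$, which controls how far the Hom-ideal closure of $I$ in $Q$ can drift outside $M(Q)(I)$; once this is in hand, the injectivity of $\alpha$ converts the $\alpha$-shifted vanishing coming from Corollary~\ref{cor:6.3} into outright vanishing, and the contradiction closes.
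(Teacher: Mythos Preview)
Your argument is correct. Both you and the paper start from the same place: assume $L$ has a nonzero Hom-ideal $I$ with $[I,\alpha(I)]=\{0\}$ and invoke Corollary~\ref{cor:6.3} to get $[M(Q)(I),\alpha(M(Q)(I))]=\{0\}$. The paper then finishes in one line by asserting that $M(Q)(I)$ is itself a Hom-ideal of $Q$, which immediately contradicts the semiprimeness of $Q$.

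Your route is more careful at exactly this point. Because $\mathrm{ad}_{q}=[\alpha(q),\,\cdot\,]$, one only sees directly that $[\alpha(Q),M(Q)(I)]\subseteq M(Q)(I)$, not that $[Q,M(Q)(I)]\subseteq M(Q)(I)$; so the Hom-ideal status of $M(Q)(I)$ in $Q$ is not entirely transparent. You sidestep this by working with the genuine Hom-ideal $\tilde I=\bigcup V_n$ generated by $I$, proving the calibration $\alpha^{n}(V_n)\subseteq M(Q)(I)$, and then exploiting the injectivity of $\alpha$ (which you extract from $\mathrm{Ann}(Q)=\{0\}$) to pull the $\alpha$-shifted vanishing back to $[\tilde I,\tilde I]=\{0\}$. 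What your approach buys is robustness: it does not depend on whether $M(Q)(I)$ is already closed under bracketing with all of $Q$, only on the weaker containment $\alpha^{n}(V_n)\subseteq M(Q)(I)$. What the paper's approach buys is brevity, at the cost of leaving the verification that $M(Q)(I)$ is a Hom-ideal of $Q$ to the reader.
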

\begin{proof}
Otherwise, there exists a nonzero Hom-ideal $I$ such that $[I, \alpha(I)] = \{0\}$. By Corollary \ref{cor:6.3}, $[M(Q)(I), \alpha(M(Q)(I))] = \{0\}$ where $M(Q)(I)$ is a Hom-ideal of $Q$, contradiction.
\end{proof}
\begin{lem}\label{le:6.5}
Suppose that $(L, [\cdot, \cdot], \alpha)$ is a dense Hom-subalgebra of $(Q, [\cdot, \cdot], \alpha)$. If $Q$ is multiplicatively semiprime, then $L$ is also multiplicatively semiprime.
\end{lem}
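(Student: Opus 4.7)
The plan is to split the claim into two independent assertions: that $L$ is semiprime and that $M(L)$ is semiprime. The first is already delivered by Corollary \ref{cor:6.4}, so the real content is the multiplicative part, namely the semiprimeness of $M(L)$.

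My first step would be to promote the denseness hypothesis into a canonical injective algebra homomorphism $M(L) \hookrightarrow M(Q)$. The assignment $\mathrm{id}_L \mapsto \mathrm{id}_Q$ and $\mathrm{ad}_x \mapsto \mathrm{ad}_x$ (for $x \in L$, interpreting the target as a derivation of $Q$) extends to an associative algebra homomorphism, because any composition $\mathrm{ad}_{x_1} \cdots \mathrm{ad}_{x_n}$ with $x_i \in L$ sends $L$ into $L$ and agrees there with the corresponding element of $M(L)$. Any element in the kernel would be an element of $M(Q)$ annihilating $L$, hence would lie in $L^{ann}$, which is zero by denseness.

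Next, suppose for contradiction that $M(L)$ is not semiprime. Choose $\mu \in M(L)\setminus\{0\}$ with $\mu M(L)\mu = \{0\}$; this is equivalent to asserting that the principal two-sided ideal of $M(L)$ generated by $\mu$ is square-zero, so it is the condition one really needs to exclude. Identify $\mu$ with its image in $M(Q)$. Because $\mu M(L)\mu$ annihilates $L$ and $L^{ann} = \{0\}$, the identity $\mu M(L)\mu = \{0\}$ persists inside $M(Q)$. The target is to upgrade this to $\mu M(Q)\mu = \{0\}$, since the semiprimeness of $M(Q)$ (which is part of the hypothesis that $Q$ is multiplicatively semiprime) will then force $\mu = 0$, the desired contradiction.

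The main obstacle is precisely this upgrade, and I would handle it by routing through Lemma \ref{le:6.2}. Consider the subspace $V := M(L)\mu(L) \subseteq L$, on which $\mu$ vanishes by $\mu M(L)\mu = \{0\}$; because $\alpha$ commutes with every element of $M(L)$, $V$ is already $\alpha$-stable. Using the Hom-Jacobi identity to absorb outer brackets $[L, V]$ back into $M(L)\mu(L)$ modulo the kernel of $\mu$, one shows that the Hom-ideal $I$ of $L$ generated by $V$ still satisfies $\mu(I) = \{0\}$. Lemma \ref{le:6.2} then produces $\mu(M(Q)(I)) = \{0\}$; since $\mu(L) \subseteq I$, it follows that $\mu\rho\mu(L) \subseteq \mu(M(Q)(I)) = \{0\}$ for every $\rho \in M(Q)$, and one final appeal to denseness (via $L^{ann} = \{0\}$) turns this into $\mu M(Q)\mu = \{0\}$ inside $M(Q)$. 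Semiprimeness of $M(Q)$ then yields $\mu = 0$, closing the argument.
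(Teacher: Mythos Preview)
Your proposal is correct and follows essentially the same route as the paper: reduce to semiprimeness of $M(L)$ via Corollary~\ref{cor:6.4}, lift $\mu$ to $M(Q)$, use $\mu M(L)\mu=0$ to get $\mu(I)=0$ for the Hom-ideal $I$ generated by $\mu(L)$, invoke Lemma~\ref{le:6.2} to obtain $\hat\mu M(Q)\hat\mu(L)=0$, and conclude by denseness and semiprimeness of $M(Q)$. The only cosmetic differences are that the paper picks a single lift $\hat\mu$ rather than building the full embedding $M(L)\hookrightarrow M(Q)$, and it takes $I$ to be the Hom-ideal generated by $\mu(L)$ directly (which coincides with the Hom-ideal generated by your $V=M(L)\mu(L)$, since $\mu(L)\subseteq V\subseteq I$); your added remarks about why the lift is well defined and why $\mu(I)=0$ are in fact more explicit than what the paper provides.
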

\begin{proof}
It's sufficient to show that $M(L)$ is semiprime by Corollary \ref{cor:6.4}. Suppose that $\mu$ in $M(L)$ satisfies $\mu M(L)\mu = 0$. Then $\mu(I) = \{0\}$ where $I$ denotes the Hom-ideal of $L$ generated by $\mu(L)$. Now, choose an element of $M(Q)$, say $\hat{\mu}$, satisfying $\hat{\mu}|_{L} = \mu|_{L}$. Hence $\hat{\mu}(I) = \{0\}$. By Lemma \ref{le:6.2}, $\hat{\mu}(M(Q)(I)) = \{0\}$, and so $\hat{\mu}M(Q)\hat{\mu}(L) = \{0\}$. Consequently we have $\hat{\mu}M(Q)\hat{\mu} = 0$ since $L \subseteq Q$ is dense. Note that $M(Q)$ is semiprime, we get $\hat{\mu} = 0$, and so $\mu = 0$.
\end{proof}
\begin{re}
Given a dense extension of Hom-Lie algebras $L \subseteq Q$, where $Q$ is multiplicatively semiprime, we have that $L$ is also multiplicatively semiprime from Lemma \ref{le:6.5}.
\end{re}
\begin{prop}\label{prop:6.7}
Suppose that $(L, [\cdot, \cdot], \alpha)$ is a dense Hom-subalgebra of $(Q, [\cdot, \cdot], \alpha)$ and $Q$ a multiplicative semiprime algebra of quotients of $L$. Then $I \subseteq Q$ is a dense extension for every essential Hom-ideal of $I$ of $L$.
\end{prop}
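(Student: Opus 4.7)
The plan is to show that for every $\mu \in M(Q)$ with $\mu(I) = \{0\}$ one must have $\mu = 0$, which is exactly the statement that $I \subseteq Q$ is a dense extension. First I collect the tools provided by the hypotheses: by Lemma \ref{le:6.5} together with Corollary \ref{cor:6.4} and Remark \ref{re:2.9}, $L$ is semiprime, so Proposition \ref{prop:2.10}(2) gives $\mathrm{Ann}_{L}(I) = \{0\}$; since $Q$ is a weak algebra of quotients of $L$, Lemma \ref{le:5.4} then gives $rann_{A(Q)}(A_{Q}(I)) = \{0\}$. Moreover, $Q$ is semiprime, hence $\mathrm{Ann}(Q) = \{0\}$, which allows the exact argument of Lemma \ref{le:6.1} to reduce the problem to the case $\mu \in A(Q)$: if some nonzero $\mu \in M(Q)$ killed $I$, I would pick $q_{0}$ with $\mu(q_{0}) \neq 0$, find $r \in Q$ with $\mathrm{ad}_{r}\mu(q_{0}) \neq 0$, and note that $\mathrm{ad}_{r}\mu \in A(Q)$ is nonzero and still annihilates $I$, contradicting the $A(Q)$ case.

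For $\mu \in A(Q)$ with $\mu(I) = \{0\}$, Lemma \ref{le:6.2} upgrades this to $\mu(J) = \{0\}$, where $J := M(Q)(I)$. Note that $J$ is $M(Q)$-invariant since $M(Q)(J) = M(Q)M(Q)(I) \subseteq M(Q)(I) = J$. Let $\tilde{I}_{Q}$ denote the two-sided ideal of $A(Q)$ generated by $A_{Q}(I)$. The key claim is $\mu\tilde{I}_{Q} = 0$. To verify it, it suffices to show that for any product $\omega_{1}\cdots\omega_{k} \in A(Q)$ containing at least one factor $\mathrm{ad}_{y}$ with $y \in I$, and every $q \in Q$, the element $\omega_{1}\cdots\omega_{k}(q)$ lies in $J$. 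At the innermost such $\mathrm{ad}_{y}$ applied to some $z \in Q$, I use the identity $\mathrm{ad}_{y}(z) = [\alpha(y), z] = -[\alpha(z), y] = -\mathrm{ad}_{z}(y) \in M(Q)(I) = J$ (the second equality uses (\ref{eq:2.1})); every subsequent $\omega_{i}$ lies in $A(Q) \subseteq M(Q)$ and preserves $J$, so the final output lands in $J$ and $\mu$ kills it.

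To conclude $\mu = 0$, I observe that $\mu \in lann_{A(Q)}(\tilde{I}_{Q})$. Since $M(Q)$ is semiprime and $A(Q)$ is an ideal of $M(Q)$, $A(Q)$ is itself semiprime. In any semiprime associative algebra the left and right annihilators of a two-sided ideal $X$ coincide, because $aX = 0$ forces $(Xa)^{2} = X(aX)a = 0$, and a left ideal of square zero must vanish in a semiprime algebra. Combined with $rann_{A(Q)}(\tilde{I}_{Q}) \subseteq rann_{A(Q)}(A_{Q}(I)) = \{0\}$, this yields $lann_{A(Q)}(\tilde{I}_{Q}) = \{0\}$, so $\mu = 0$. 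The main obstacle is the middle step: verifying that $\mu$ annihilates \emph{all} of $\tilde{I}_{Q}$ rather than only $A_{Q}(I)$, which relies on the $M(Q)$-invariance of $J$ together with the inner-derivation identities, and then bridging the asymmetry between left and right annihilators via semiprimeness of $A(Q)$.
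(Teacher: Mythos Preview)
Your argument is correct and takes a genuinely different route from the paper's. Both proofs begin by reducing to $\mu\in A(Q)$ via the Lemma~\ref{le:6.1} trick and then invoke Lemma~\ref{le:6.2} to upgrade $\mu(I)=\{0\}$ to $\mu(M(Q)(I))=\{0\}$. From there the paper appeals to Theorem~\ref{thm:5.11} (that $A(Q)$ is a left quotient algebra of $A_{0}$) to produce $\lambda\in A_{0}$ with $0\neq\lambda\mu\in A_{0}$, then uses density of $L\subseteq Q$ twice together with $\mathrm{Ann}_{L}(I)=\{0\}$ and the semiprimeness of $A(Q)$ to force a contradiction through the chain $\mu A(Q)\mathrm{ad}_{y}\lambda\mu(L)\subseteq \mu M(Q)(I)=\{0\}$. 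You bypass Theorem~\ref{thm:5.11} entirely: the observation that $\tilde I_{Q}(Q)\subseteq M(Q)(I)$ (coming from $\mathrm{ad}_{y}(z)=-\mathrm{ad}_{z}(y)$ and the $M(Q)$-invariance of $M(Q)(I)$) puts $\mu$ directly into $lann_{A(Q)}(\tilde I_{Q})$, and the standard semiprime fact $lann(X)=rann(X)$ for two-sided ideals, combined with Lemma~\ref{le:5.4}, finishes the job. Your approach is shorter and more self-contained, needing neither the left-quotient machinery of Section~\ref{se:5} nor the repeated direct appeals to density of $L$; the paper's route, on the other hand, illustrates how Theorem~\ref{thm:5.11} can be deployed and yields Corollary~\ref{cor:6.8} as a natural follow-up, whereas your argument essentially proves (a slightly stronger form of) Corollary~\ref{cor:6.8} along the way.
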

\begin{proof}
We first observe that $\rm{Ann}(\it{Q}) = \{\rm{0}\}$ since $Q$ is an algebra of quotients of $L$ and hence Lemma \ref{le:6.1} applies. Thus, let $\mu$ be in $A(Q)$ such that $\mu(I) = \{0\}$, and by way of contradiction assume that $\mu \neq 0$. According to Theorem \ref{thm:5.11}, $A(Q)$ is a left quotients algebra of $A_{0}$ and hence there exists $\lambda$ in $A_{0}$ such that $0 \neq \lambda\mu \in A_{0}$. Since the extension $L \subseteq Q$ is dense, $\lambda\mu(L) \neq \{0\}$, and since $\rm{Ann}_{\it{L}}(\it{I}) = \{\rm{0}\}$, there exists a nonzero element $y$ in $I$ such that $\rm{ad}\it{_{y}}\lambda\mu(L) \neq \{\rm{0}\}$. Using now that $A(Q)$ has no total right zero divisor according to Lemma \ref{le:5.1}, we get that $A(Q)\rm{ad}\it{_{y}}\lambda\mu \neq \rm{0}$, and this, coupled with the semiprimeness of $A(Q)$, implying that $A(Q)\rm{ad}\it{_{y}}\lambda\mu A(Q)\rm{ad}\it{_{y}}\lambda\mu \neq \rm{0}$. A second application of the fact that $L \subseteq Q$ is a dense extension yields $A(Q)\rm{ad}\it{_{y}}\lambda\mu A(Q)\rm{ad}\it{_{y}}\lambda\mu(L) \neq \{\rm{0}\}$. However, $\mu(I) = \{0\}$ by assumption and thus implies that $\mu M(Q)(I) = \{0\}$. But this is a contradiction, because of the containments
\[\mu A(Q)\rm{ad}\it{_{y}}\lambda\mu(L) \subseteq \mu A(Q)([\alpha(I), L]) \subseteq \mu A(Q)(I) \subseteq \mu M (Q)(I) = \{\rm{0}\}.\]
This completes the proof.
\end{proof}
\begin{cor}\label{cor:6.8}
Suppose that $(L, [\cdot, \cdot], \alpha)$ is a dense Hom-subalgebra of $(Q, [\cdot, \cdot], \alpha)$ and $Q$ a multiplicative semiprime algebra of quotients of $L$. Then for every essential Hom-ideal $I$ of $L$, $lann_{A(Q)}(\tilde{I}) = \{0\}$.
\end{cor}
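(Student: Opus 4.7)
The plan is to take an arbitrary $\mu \in lann_{A(Q)}(\tilde{I})$ and show that $\mu$ vanishes on the essential Hom-ideal $I^{2} = [I, \alpha(I)]$; once this is done, Proposition~\ref{prop:6.7} applied to $I^{2}$ in place of $I$ will immediately force $\mu = 0$.

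First I would reduce the annihilation condition $\mu\tilde{I} = 0$ to a concrete pointwise kernel statement. Since $A_{Q}(I) \subseteq \tilde{I}$, the hypothesis already gives $\mu\,\rm{ad}\it{_{y}} = \rm{0}$ in $A(Q)$ for every $y \in I$. Evaluating this identity at an arbitrary $z \in Q$ yields $\mu([\alpha(y), z]) = 0$ for all $y \in I$ and all $z \in Q$. Specializing $z$ to run over $I$ gives $\mu([\alpha(I), I]) = \{0\}$, and so $\mu(I^{2}) = \{0\}$.

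Next I would verify that $I^{2}$ is indeed an essential Hom-ideal of $L$. Since $Q$ is multiplicatively semiprime it is in particular semiprime, and then the density of the extension $L \subseteq Q$ forces $L$ itself to be semiprime by Corollary~\ref{cor:6.4}. Combined with $I$ essential, Proposition~\ref{prop:2.10}(2) yields $\rm{Ann}\it{_{L}(I)} = \{\rm{0}\}$; Lemma~\ref{le:5.8} then gives $\rm{Ann}\it{_{L}(I^{\rm{2}})} = \{\rm{0}\}$, and finally Proposition~\ref{prop:2.10}(1) shows that $I^{2}$ is essential in $L$.

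Finally I would invoke Proposition~\ref{prop:6.7} with $I^{2}$ in place of $I$ to conclude that $I^{2} \subseteq Q$ is a dense extension, i.e.\ $(I^{2})^{ann} = \{0\}$ in $M(Q)$. Since $\mu \in A(Q) \subseteq M(Q)$ and $\mu(I^{2}) = \{0\}$, this forces $\mu = 0$. I do not foresee any serious obstacle: the main content is the bridge from ``$\mu$ left-annihilates $\tilde{I}$'' to ``$\mu$ kills the essential Hom-ideal $I^{2}$ pointwise,'' after which Proposition~\ref{prop:6.7} does all the heavy lifting.
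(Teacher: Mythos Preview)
Your proposal is correct and follows essentially the same route as the paper's proof: reduce $\mu\tilde{I}=0$ to $\mu(I^{2})=\{0\}$ via $\mu\,\mathrm{ad}_{y}=0$ for $y\in I$, then invoke Proposition~\ref{prop:6.7} for the essential Hom-ideal $I^{2}$ to conclude $\mu=0$. The only difference is that you spell out in full why $I^{2}$ is essential (via Corollary~\ref{cor:6.4}, Proposition~\ref{prop:2.10}, and Lemma~\ref{le:5.8}), whereas the paper simply asserts this and applies Proposition~\ref{prop:6.7} directly.
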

\begin{proof}
Let $\mu \in lann_{A(Q)}(\tilde{I})$. Then, if $y \in I$, we have $\mu\rm{ad}\it{_{y}}(I) = \{\rm{0}\}$. This implies that $\mu(I^{2}) = \{0\}$. By Proposition \ref{prop:6.7} applied to the essential Hom-ideal $I^{2}$ of $L$, the extension $I^{2} \subseteq Q$ is dense, so $\mu = 0$.
\end{proof}

\end{document}